\documentclass{amsart}

\usepackage[a4paper, total={5.4in, 8in}]{geometry}

\usepackage[utf8]{inputenc}
\usepackage[british]{babel}

\usepackage{amsmath, amsthm, amssymb}
\usepackage{mathtools, mathrsfs}
\newcommand{\mb}{\mathbb}
\newcommand{\mc}{\mathcal}
\newcommand{\mf}{\mathfrak}
\newcommand{\ms}{\mathscr}

\usepackage{array}
\newcolumntype{C}{>{\displaystyle}>{$}c<{$}}
\newcolumntype{L}{>{\displaystyle}>{$}l<{$}}
\newcommand\T{\rule{0pt}{2.6ex}}
\newcommand\B{\rule[-1.2ex]{0pt}{0pt}}
\newcommand\M{\T\B}

\usepackage{hyperref}
\usepackage{bm}

\usepackage{tikz-cd, pgfplots}
\usetikzlibrary{positioning}
\graphicspath{{figures/}{figures/stable/}{figures/mond/}}
\tikzcdset{
  cells={font=\everymath\expandafter{\the\everymath\displaystyle}}
}

\newenvironment{funcion*}%
	{\begin{equation*}\begin{tikzcd}[row sep=-1mm]}
	{\end{tikzcd}\end{equation*}}

\newenvironment{diagram}%
	{\begin{equation}\begin{tikzcd}}
	{\end{tikzcd}\end{equation}}

\newenvironment{diagram*}%
	{\begin{equation*}\begin{tikzcd}}
	{\end{tikzcd}\end{equation*}}

\newcommand{\la}{\left\langle}
\newcommand{\ra}{\right\rangle}

\newcommand{\p}{\partial}
\newcommand{\cvf}[1]{\frac{\p}{\p #1}}

\DeclareMathOperator{\codim}{codim}
\DeclareMathOperator{\Der}{Der}
\DeclareMathOperator{\GCD}{GCD}
\DeclareMathOperator{\Iso}{Iso}
\DeclareMathOperator{\Lift}{Lift}
\DeclareMathOperator{\mult}{mult}
\DeclareMathOperator{\ord}{ord}
\DeclareMathOperator{\rk}{rk}
\DeclareMathOperator{\Sp}{Sp}
\DeclareMathOperator{\supp}{supp}

\newtheorem{corollary}{Corollary}[section]
\newtheorem{conjecture}[corollary]{Conjecture}
\newtheorem{definition}[corollary]{Definition}
\newtheorem{lemma}[corollary]{Lemma}
\newtheorem{proposition}[corollary]{Proposition}
\newtheorem{theorem}[corollary]{Theorem}
\newtheorem{example}[corollary]{Example}
\newtheorem{remark}[corollary]{Remark}

\title{Deformations of corank $1$ frontals}
\author{C. Mu\~noz-Cabello, J.J. Nu\~no-Ballesteros, R. Oset Sinha}

\address{Departament de Matem\`{a}tiques,
Universitat de Val\`encia, Campus de Burjassot, 46100 Burjassot,
Spain}
\email{chmuca@alumni.uv.es}
\email{Raul.Oset@uv.es}

\address{Departament de Matemàtiques,
Universitat de Val\`encia, Campus de Burjassot, 46100 Burjassot,
SPAIN.
Departamento de Matemática, Universidade Federal da Paraíba CEP 58051-900, João Pessoa - PB,
BRAZIL}
\email{Juan.Nuno@uv.es}

\thanks{Work of Juan J. Nuño-Ballesteros and R. Oset Sinha partially supported by Grant PID2021-124577NB-I00 funded by MCIN/AEI/ 10.13039/501100011033 and by ``ERDF A way of making Europe"}

\begin{document}

\begin{abstract}
We develop a Thom-Mather theory of frontals analogous to Ishikawa's theory of deformations of Legendrian singularities but at the frontal level, avoiding the use of the contact setting. In particular, we define concepts like frontal stability, versality of frontal unfoldings or frontal codimension. We prove several characterizations of stability, including a frontal Mather-Gaffney criterion, and of versality. We then define the method of reduction with which we show how to construct frontal versal unfoldings of plane curves and show how to construct stable unfoldings of corank 1 frontals with isolated instability which are not necessarily versal. We prove a frontal version of Mond's conjecture in dimension 1. Finally, we classify stable frontal multigerms and give a complete classification of corank 1 stable frontals from $\mathbb C^3$ to $\mathbb C^4$. 
\end{abstract}

\maketitle

\section{Introduction}

The study of frontal mappings has flourished rapidly in the last decade.
Roughly speaking a frontal is a mapping $f\colon N\to Z$ where $N$ and $Z$ are $n$ and $(n+1)$-dimensional manifolds such that the image of $N$ has a well defined tangent hyperplane at each point.
More precisely, $f$ is a frontal if it admits a Legendrian lift $\tilde f\colon N\to PT^*Z$ such that $f=\pi\circ\tilde f$, where $\pi$ is the canonical fibration.
When the Legendrian lift is an immersion we say that $f$ is a wave front.
The concept of frontals was first introduced by Fujimori, Saji, Umehara and Yamada in \cite{Shoichi} (see also \cite{ZakalyukinKurbatskii}) and since then it has been of great interest to differential geometers, singularists and contact topologists.
The fact of having a well defined normal at each point allows one to study differential geometric properties and invariants in singular spaces (\cite{ChenPeiTakahashi, MartinsNuno, MurataUmehara, OsetSaji, SUYGaussBonnet}), on the other hand, when studying contact and symplectic topology front singularities are unavoidable \cite{Casals_Murphy} and understanding the generic (or stable) situations is crucial.

In \cite{Ishikawa05}, Ishikawa developed the analogue of the Thom-Mather theory for corank one Legendrian singularities and he stated the main notions like infinitesimal deformations, stability, versality, etc. Our purpose in this paper is to construct a Thom-Mather theory of singularities of frontals, but downstairs, at the level of frontals, and thus, avoiding the use of the contact setting.
In particular, we consider deformations that come from unfoldings $F$ of the frontal $f$.
We show that such unfoldings $F$ come from a deformation of its Legendrian lift $\tilde f$ if and only if $F$ is frontal as a mapping.
Taking local charts of $N$ and $Z$ we study map germs $f\colon (\mb{K}^n,S)\to(\mb{K}^{n+1},0)$ under $\ms{A}$-equivalence, i.e. smooth changes of coordinates in source and target. Here, smooth means $C^\infty$ when $\mb{K}=\mb{R}$ or holomorphic when $\mb{K}=\mb{C}$.
The case of frontal surfaces ($n=2$) was studied in a previous paper (\cite{FrontalSurfaces}) where analytic/toplogical invariants were defined and characteristations of finite frontal codimension were given, amongst other interesting results on surfaces, using some of the definitions and results that will be given in this paper.

In Section \ref{unfolding} we define the concept of frontal stability and versality.
We define a frontal codimension and prove that a frontal is stable if and only if it has frontal codimension 0.
We also give a characterisation of versality analogous to Mather's versality theorem.
Section \ref{mathergaffney} gives a geometric criterion for stability, a frontal Mather-Gaffney criterion which states that a frontal is stable if and only if it has isolated instability.
Sections \ref{frontal reductions} and \ref{stable unfoldings} are devoted to show how to construct stable frontals as frontal versal unfoldings of plane curves or as a well defined sum of frontal unfoldings.
We define the frontal reduction of an $\mathscr A_e$-versal unfolding of a plane curve and prove that it is, in fact, a versal frontal unfolding.
As a by product we relate the frontal codimension of a plane curve with its $\ms{A}_e$-codimension and prove the frontal Mond conjecture (stated in \cite{FrontalSurfaces}) in dimension 1, which says that the frontal codimension is less than or equal to the frontal Milnor number (the number of spheres in a stable deformation) with equality if the germ is quasi-homogeneous.
We also give a method to construct stable unfoldings which are not necessarily versal.
We then turn our attention to characterizing stability of frontal multigerms defining a frontal Kodaira-Spencer map which also yields a tangent space to the iso-singular locus (the manifold along which the frontal is trivial).
Finally we use our methods to obtain a complete list of stable 3-dimensional frontals in $\mb{C}^4$.
Note that generic wave fronts were classified by Arnol'd in \cite{Arnold_CWF} and, on the other hand, Ishikawa classified stable Legendrian maps (which may have different projected frontals), but, until now, a complete classification of stable frontals was only known for $n=1$ (\cite{Arnold_CWF}) and $n=2$ (\cite{Nuno_CuspsAndNodes}).

For technical reasons in order to use Ishikawa's results we restrict ourselves to the case of frontals whose Legendrian lift has corank 1.

\section{Frontal map-germs}
Let $W$ be a smooth manifold of dimension $2n+1$.
A field of hyperplanes $\Delta$ over $W$ is a contact structure for $W$ if, for all $w \in W$, there exist an open neighbourhood $U \subseteq W$ of $w$ and a $\sigma \in \Omega^1(U)$ such that
\begin{enumerate}
	\item $\rk \sigma_w=1$;
	\item the fibre $\Delta_w$ of $\Delta$ at $w$ is $\ker \sigma_w$;
	\item $(\sigma\wedge d\sigma \wedge \stackrel{(n)}{\dots} \wedge d\sigma)_w\neq 0$.
\end{enumerate}
We call $\sigma$ the local contact form of $W$, and define a \textbf{contact manifold} as a pair $(W,\Delta)$, where $\Delta$ is a contact structure on $W$.
Given a smooth manifold $Z$ of dimension $n+1$, a submersion $\pi\colon W \to Z$ is a \textbf{Legendrian fibration} for $(W,\Delta)$ if, for all $w \in W$,
	\[(d\pi_w)^{-1}(T_{\pi(w)}Z) \subseteq \ker \sigma_w.\]

\begin{example}\label{projective cotangent}
	Let $W=PT^*\mb{K}^{n+1}$ be the projectivised cotangent bundle of $\mb{K}^{n+1}$, and $(z,[\omega]) \in W$.
	The differential $1$-form
		\[\alpha=\omega_1\,dz^1+\dots+\omega_{n+1}\,dz^{n+1}\]
	defines a contact structure on $W$.
	The projection $W \to \mb{K}^{n+1}$ given by $(z,[\omega]) \mapsto z$ is a Legendrian fibration under this contact structure.
\end{example}

\begin{definition}
	Let $\pi\colon W \to Z$, $\pi'\colon W' \to Z'$ be Legendrian fibrations.
	A diffeomorphism $\Psi\colon W \to W'$ between contact manifolds is
	\begin{enumerate}
		\item a \textbf{contactomorphism}, if $\Delta'=d\Psi(\Delta)$;
		\item a \textbf{Legendrian diffeomorphism} if it is a contactomorphism and there exists a diffeomorphism $\psi\colon Z \to Z'$ such that $\psi\circ \pi=\pi'\circ \Psi$.
	\end{enumerate}
	We say $W$ is contactomorphic to $W'$ if there is a contactomorphism $\Psi\colon W \to W'$.
\end{definition}

A well-known result by Darboux states that any two contact manifolds $W,W'$ of the same dimension admit a local diffeomorphism $\Psi\colon W \to W'$ such that $\Delta'=d\Psi(\Delta)$ (see e.g. \cite{Arnold_I}, \S 20.1).
In particular, if $\dim W=2n+1$, $W$ is locally contactomorphic to the contact manifold described in Example \ref{projective cotangent}; therefore, we can restrict oruselves to the setting given in Example \ref{projective cotangent}.

Let $N \subseteq \mb{K}^{n+1}$ be an open subset.
A mapping $F\colon N \to PT^*\mb{K}^{n+1}$ is \textbf{integral} if $F^*\alpha=0$.

\begin{definition}
	A smooth mapping $f\colon N^n \to Z^{n+1}$ is \textbf{frontal} if there exist an integral mapping $F\colon N \to W$ and a Legendrian fibration $\pi\colon W \to Z$ such that $f=\pi \circ F$.
	If $F$ is an immersion, we say $f$ is a \textbf{wave front}.
	Similarly, a hypersurface $X \subset Z$ is \textbf{frontal} (resp. a \textbf{wave front}) if there exists a frontal map (resp. wave front) $f\colon N \to Z$ such that $X=f(N)$.
\end{definition}

\begin{definition}
	Let $S \subset N$ be a finite set.
	A smooth multigerm $f\colon (N,S) \to (Z,0)$ is \textbf{frontal} if it has a frontal representative $f\colon N \to Z$.
	Given a hypersurface $X \subset Z$, $(X,z)$ is a \textbf{frontal} hypersurface germ if there exists a frontal map germ $f\colon (N,S) \to (Z,z)$ such that $(X,z)=f(N,S)$.
\end{definition}

Let $F\colon N \to PT^*\mb{K}^{n+1}$ be an integral map and $f=\pi\circ F$: there exist $\nu_1,\dots,\nu_{n+1} \in \ms{O}_n$ such that
\begin{equation}\label{eq: interpretation of frontal}
	0=F^*\alpha=\sum^{n+1}_{i=1} \nu_1 d(Z_i\circ f)=\sum^{n+1}_{i=1}\sum^n_{j=1}\nu_i\frac{\p f_i}{\p x_j}\,dx^j,
\end{equation}
where $Z_1,\dots,Z_{n+1}$ are coordinates for $\mb{K}^{n+1}$.
Setting $\nu=\nu_1\,dZ_1+\dots+\nu_{n+1}\,dZ_{n+1}$, this is equivalent to $\nu(df\circ \xi)=0$ for all $\xi \in \theta_n$.
Since $PT^*\mb{K}^{n+1}$ is a fibre bundle, we can find for each pair $(z,[\omega]) \in PT^*\mb{K}^{n+1}$ an open neighbourhood $Z \subset \mb{K}^{n+1}$ of $z$ and an open $U \subseteq \mb{K}P^{n+1}$ such that $\pi^{-1}(Z)\cong Z\times U$.
Therefore, $F$ is contact equivalent to the mapping $\tilde f(x)=(f(x),[\nu_x])$, known as the \textbf{Nash lift} of $f$.

If we assume that $\Sigma(f)$ is nowhere dense in $N$, the differential form $\nu$ is uniquely determined by $f$, giving us a one-to-one correspondence between $f$ and $\tilde f$.
Such a frontal map is known as a \textbf{proper frontal} map (according to Ishikawa \cite{Ishikawa_Survey}).
We also define the \textbf{integral corank} of a proper frontal as the corank of its Nash lift.

For the rest of this article, we shall assume all frontal map germs are proper.
Note that the notion of topological properness (i.e. the preimage of a compact subset is compact) is not used throughout this article.

\begin{example}
	Let $f\colon (\mb{K}^n,0) \to (\mb{K}^{n+1},0)$ be the smooth map germ given by
	\begin{align*}
		f(x_1,\dots,x_n)=(x_1^2,\dots,x_n^2,2x_1^{p_1}+\dots+2x_n^{p_n}); && p_1,\dots,p_n > 1
	\end{align*}
	It is easy to see that $f$ has corank $n$ and the singular set $\Sigma(f)$ is nowhere dense in $\mb{K}^n$.
	Furthermore, the assumption that $p_1,\dots,p_n > 1$ implies that the Jacobian ideal of $f$ is generated by $x_1x_2\dots x_n$, and thus it is a proper frontal map germ by Proposition \ref{Ishikawa criterion frontality} below.
	In particular, the differential $1$-form
	\begin{align*}
  		\nu_{(x_1,\dots,x_n)}=p_1x_1^{p_1-2}\,dX^1+\dots+p_nx_n^{p_n-2}\,dX^n-dX^{n+1},
  	\end{align*}
	verifies that $\nu(df\circ\xi)=0$ for all $\xi \in \theta_n$, and has corank equal to the number of $p_i$ that are greater than $3$.
	Therefore, the integral corank of $f$ is also equal to the number of $p_i$ greater than $3$.
	In particular, $f$ is a wave front when all $p_i$ are equal to $3$.
\end{example}

\begin{proposition}[\cite{Ishikawa_Survey}, Lemma 2.3]\label{Ishikawa criterion frontality}
	Let $f\colon (\mb{K}^n,S) \to (\mb{K}^{n+1},0)$ be a map germ.
	If $f$ is frontal, then the Jacobian ideal $J_f$ of $f$ is principal (i.e. it is generated by a single element).
	Conversely, if $J_f$ is principal and $\Sigma(f)$ is nowhere dense in $(\mb{K}^n,S)$, then $f$ is a proper frontal map germ.
\end{proposition}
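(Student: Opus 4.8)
We need to prove two directions: (1) if $f$ is frontal, then $J_f$ is principal; (2) if $J_f$ is principal and $\Sigma(f)$ is nowhere dense, then $f$ is a proper frontal.

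For direction (1), the key observation is the interpretation given in equation \eqref{eq: interpretation of frontal}: being frontal means there exist $\nu_1,\dots,\nu_{n+1}\in\ms O_n$, not all zero, with $\sum_i \nu_i\,\frac{\p f_i}{\p x_j}=0$ for every $j=1,\dots,n$. In other words, the vector $(\nu_1,\dots,\nu_{n+1})$ lies in the kernel of the Jacobian matrix $df$ acting from the left. Properness guarantees the $\nu_i$ have no common factor. The plan is to argue that the $n\times n$ minors of $df$ all lie in the ideal generated by any single one of them: writing down the relation $\nu\cdot df=0$ and applying Cramer-type identities, one shows that each maximal minor of $df$ is a multiple (by one of the $\nu_i$, up to sign) of a fixed minor divided by an appropriate $\nu_i$. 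More cleanly, since the rows of $df$ satisfy a nontrivial linear relation over $\ms O_n$ with coefficients having no common factor, the matrix $df$ (which is $n\times(n+1)$) has all its $n\times n$ minors divisible by a common generator, and one identifies this generator explicitly via the cofactors; the resulting element generates $J_f$.

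For direction (2), suppose $J_f=(h)$ for some $h\in\ms O_n$ and $\Sigma(f)$ is nowhere dense. The $n\times n$ minors $\lambda_1,\dots,\lambda_{n+1}$ of $df$ (where $\lambda_i$ deletes the $i$-th row, with sign) satisfy $\lambda_i = (-1)^{i}\mu_i h$ for suitable $\mu_i\in\ms O_n$ with no common factor. The plan is to set $\nu_i = \mu_i$ and check that $\nu=\sum_i\nu_i\,dZ_i$ satisfies $\nu(df\circ\xi)=0$ for all $\xi\in\theta_n$: this is precisely the Laplace/cofactor expansion identity stating that $\sum_i (-1)^i\lambda_i \frac{\p f_i}{\p x_j}=0$ for each $j$ (it is the expansion of a determinant with a repeated column, hence zero), divided through by $h$, valid since $\ms O_n$ is a domain and $h\ne 0$ off the nowhere-dense set $\Sigma(f)$. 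This produces an integral lift $\tilde f(x)=(f(x),[\nu_x])$; one checks $[\nu_x]$ is well defined wherever the $\mu_i$ don't simultaneously vanish, which is a nowhere dense set, so $\tilde f$ extends to a genuine map germ into $PT^*\mb K^{n+1}$, and properness follows from $\Sigma(f)$ being nowhere dense together with the $\mu_i$ having no common factor.

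The main obstacle is the bookkeeping in the cofactor identities and, more substantively, verifying that the Nash lift $\tilde f$ extends holomorphically (or smoothly) across the locus where all the $\mu_i$ vanish — i.e. that $[\nu_x]$ genuinely defines a point of projective space near such points. In the holomorphic case this is a normality/Hartogs-type argument using that the bad locus has codimension $\ge 1$ and the $\mu_i$ have no common factor; in the $C^\infty$ case one must be more careful, and this is exactly where the hypothesis that $\Sigma(f)$ is nowhere dense (equivalently, $h\not\equiv 0$) is essential. Since this is quoted from Ishikawa's survey \cite{Ishikawa_Survey}, Lemma 2.3, I would present the cofactor computation in detail and refer to the cited source for the extension argument, or reproduce it briefly if space permits.
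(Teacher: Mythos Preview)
The paper does not prove this proposition; it is quoted verbatim from Ishikawa's survey \cite{Ishikawa_Survey}, Lemma 2.3, without proof. So there is no ``paper's own proof'' to compare against, and your attempt should be judged on its own merits.

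Your overall strategy via cofactor identities is correct, but two points deserve sharpening.

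\emph{Direction (1).} You write that ``properness guarantees the $\nu_i$ have no common factor'' and then invoke a Cramer-type argument. This is both unnecessary and slightly too weak. The definition of frontal already gives you a map into projective space, so $[\nu_0]\in\mb{K}P^n$; hence some $\nu_i$ is a \emph{unit} in $\ms O_{n,0}$, say $\nu_{n+1}$. Then row $n+1$ of $df$ is an $\ms O_n$-combination of the other rows, and elementary multilinearity shows every $n\times n$ minor equals $\pm(\nu_i/\nu_{n+1})$ times the minor obtained by deleting row $n+1$. No properness is needed here, and ``no common factor'' alone would not suffice.

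\emph{Direction (2).} The obstacle you flag --- extending $[\mu]$ across the locus where all $\mu_i$ vanish --- is not actually present. Writing the signed minors as $\lambda_i=(-1)^i\mu_i h$, the equality $(\lambda_1,\dots,\lambda_{n+1})=(h)$ gives $h=\sum a_i\lambda_i$, so $h\bigl(1-\sum(-1)^i a_i\mu_i\bigr)=0$. Since $\Sigma(f)$ is nowhere dense, $h$ is nonzero on a dense set, and continuity forces $\sum(-1)^i a_i\mu_i=1$; thus the $\mu_i$ generate the unit ideal and some $\mu_i$ is a unit, so $[\mu]$ is a genuine map into $\mb{K}P^n$. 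The same density-plus-continuity argument lets you cancel $h$ in $h\cdot\sum_i\mu_i\,\partial f_i/\partial x_j=0$ even in the $C^\infty$ case, so no Hartogs-type extension is needed. With these adjustments your proof is complete and self-contained.
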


If $f$ has corank $1$, we may choose local coordinates in the source and target such that
	\begin{align}\label{eq: prenormal corank 1}
		f(x,y)=(x,p(x,y),q(x,y)); && x \in \mb{K}^{n-1},\, y \in \mb{K}
	\end{align}
in which case $J_f$ is the ideal generated by $p_y$ and $q_y$, and we recover the following criterion by Nuño-Ballesteros \cite{Nuno_CuspsAndNodes}:

\begin{corollary}\label{pq criterion frontality}
	Let $f\colon (\mb{K}^n,S) \to (\mb{K}^{n+1},0)$ be a frontal map germ of corank $1$, and choose coordinates in the source and target such that $f$ is given as in Equation \eqref{eq: prenormal corank 1}.
	Then $f$ is a frontal map germ if and only if either $p_y|q_y$ or $q_y|p_y$.
\end{corollary}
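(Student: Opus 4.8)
The plan is to invoke Proposition \ref{Ishikawa criterion frontality} and simply unwind what principality of the Jacobian ideal means in the corank $1$ normal form. First I would note that for $f(x,y)=(x,p(x,y),q(x,y))$ with $x\in\mb{K}^{n-1}$ and $y\in\mb{K}$, the differential $df$ has the block form consisting of the identity on the $x$-coordinates together with the column of $y$-derivatives $(p_y,q_y)$; hence every $n\times n$ minor of the Jacobian matrix is, up to sign, either a unit times $p_y$ or a unit times $q_y$ (or zero), so $J_f=\la p_y,q_y\ra$ as an ideal in $\ms{O}_n$. Since $\Sigma(f)=V(p_y,q_y)$ is automatically nowhere dense once $(p_y,q_y)\neq(0,0)$ — and if both vanish identically $f$ is an immersion, hence trivially frontal with $J_f=\ms O_n$ principal — Proposition \ref{Ishikawa criterion frontality} tells us that $f$ is a (proper) frontal germ if and only if the ideal $\la p_y,q_y\ra$ is principal.

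The remaining point is the elementary algebraic fact that, in the local ring $\ms{O}_n$ (which is a Noetherian local ring, and in the holomorphic or formal case a UFD), the ideal $\la a,b\ra$ generated by two elements is principal if and only if $a\mid b$ or $b\mid a$. One implication is trivial: if, say, $a\mid b$ then $\la a,b\ra=\la a\ra$. For the converse I would argue as follows. Suppose $\la a,b\ra=\la h\ra$. Then $h\mid a$ and $h\mid b$, so write $a=h\alpha$, $b=h\beta$; also $h=\lambda a+\mu b$ for some $\lambda,\mu\in\ms O_n$, whence $h=h(\lambda\alpha+\mu\beta)$ and, $\ms O_n$ being a domain, $\lambda\alpha+\mu\beta=1$. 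Thus $\la\alpha,\beta\ra=\ms O_n$, i.e. $\alpha$ and $\beta$ have no common factor in the maximal ideal; since $\ms O_n$ is local this forces at least one of $\alpha,\beta$ to be a unit. If $\alpha$ is a unit then $h=a\alpha^{-1}$ divides $b$, giving $a\mid b$; symmetrically if $\beta$ is a unit then $b\mid a$.

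The only genuinely delicate case is $\mb K=\mb R$, where $\ms O_n=C^\infty_n$ is not a UFD and "no common factor" is a less transparent notion; there I would instead exploit that $C^\infty_n$ is still a local ring and reduce to the statement that a nonzero element of a local ring $R$ that divides neither of $\alpha,\beta$ cannot satisfy $\lambda\alpha+\mu\beta=1$ — but in fact the argument above only used that $R$ is a local domain, and $C^\infty_n$ is an integral domain, so the same proof goes through verbatim. Hence the main (and essentially only) obstacle is bookkeeping: checking carefully that $\Sigma(f)$ is nowhere dense exactly when $(p_y,q_y)\neq(0,0)$ so that Proposition \ref{Ishikawa criterion frontality} applies, and that the degenerate immersive case is covered. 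I expect the whole proof to be short once these observations are assembled.
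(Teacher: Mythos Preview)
Your overall strategy is precisely the paper's: observe that in the normal form \eqref{eq: prenormal corank 1} one has $J_f=\la p_y,q_y\ra$ and invoke Proposition \ref{Ishikawa criterion frontality}. The paper does not spell out the algebra beyond this observation and simply attributes the criterion to \cite{Nuno_CuspsAndNodes}.

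Two corrections. First, a minor one: the $n\times n$ minors obtained by deleting one of the first $n-1$ rows of the Jacobian are $\pm(p_{x_i}q_y-q_{x_i}p_y)$, not unit multiples of $p_y$ or $q_y$ as you claim; they nonetheless lie in $\la p_y,q_y\ra$, and since $p_y$ and $q_y$ themselves occur as minors, the identity $J_f=\la p_y,q_y\ra$ stands.

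Second, a genuine gap in the case $\mb K=\mb R$: the ring $C^\infty_n$ of smooth germs is \emph{not} an integral domain --- germs of bump functions with disjoint supports give zero divisors --- so from $h=h(\lambda\alpha+\mu\beta)$ you cannot cancel $h$ to obtain $\lambda\alpha+\mu\beta=1$. The correct fix is to use the properness hypothesis rather than domain-ness: if $f$ is a proper frontal then $\Sigma(f)=V(h)$ is nowhere dense, hence $\lambda\alpha+\mu\beta=1$ on the dense set $\{h\neq 0\}$ and therefore everywhere by continuity; locality of $\ms O_n$ then forces one of $\alpha,\beta$ to be a unit as you argue. For the same reason your parenthetical claim that $\Sigma(f)=V(p_y,q_y)$ is ``automatically nowhere dense once $(p_y,q_y)\neq(0,0)$'' is false in $C^\infty$, so the nowhere-dense hypothesis must be retained explicitly for the converse direction, exactly as in Proposition \ref{Ishikawa criterion frontality}. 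In the holomorphic setting $\ms O_n$ is a local Noetherian domain and your argument goes through verbatim. (Also a slip in the degenerate case: if $p_y$ and $q_y$ vanish identically then $f$ has corank $1$ everywhere and is certainly not an immersion.)
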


We shall say that $f$ is in \textbf{prenormal form} if it is given as in Equation \eqref{eq: prenormal corank 1} with $q_y=\mu p_y$ for some $\mu \in \ms{O}_n$, in which case the Nash lift becomes
\begin{equation}\label{prenormal nash lift}
	\tilde f=\left(f,\frac{\p q}{\p x_1}-\mu\frac{\p p}{\p x_1},\dots,\frac{\p q}{\p x_{n-1}}-\mu\frac{\p p}{\p x_{n-1}},\mu\right)
\end{equation}
In particular, note that if $\ord_y(q)=\ord_y(p)+1$, then $\ord_y(\mu)=1$, and $f$ is a wave front.

\section{Lowering Legendrian equivalence}\label{unfolding}
The first strides in the classification of frontal mappings were done by Arnol'd and his colleagues in a series of articles published in the 1970s and 1980s.
In his work, he established a notion of equivalence native to Legendrian maps (known as \emph{Legendrian equivalence}) and developed a classification of all simple, stable wave fronts (see \cite{Arnold_I}, Chapter 21).

Ishikawa extended Arnol'd's theory of Legendrian equivalence to the broader class of integral mappings in \cite{Ishikawa05}, defining a notion of infinitesimal stability and showing that an integral map of corank at most $1$ is Legendrian stable if and only if it is infinitesimally stable.
He also showed that all Legendrian stable integral mappings of corank at most $1$ belong to a special family called open Whitney umbrellas, giving a characterisation of stable umbrellas in terms of a certain $\mb{K}$-algebra $Q$.

The goal of this section is to formulate a notion of frontal stability and versality that does not require the use of contact geometry.

\begin{remark}\label{rank is stored in the X}
	Let $f\colon (\mb{K}^n,0) \to \mb{K}^{n+1}$ be a proper frontal map germ with Nash lift $\tilde f=f\times [\nu]$.
	Since $[\nu]$ is an equivalence class in a projective space, there exists a $1 \leq i \leq n+1$ such that $\nu_i$ is non-vanishing, so we can rewrite Equation \eqref{eq: interpretation of frontal} as
	\begin{equation}\label{concentrated rank}
		d(Z_i\circ f)=-\frac{\nu_1}{\nu_i}\,d(Z_1\circ f)-\dots-\widehat{d(Z_i\circ f)}-\dots-\frac{\nu_{n+1}}{\nu_i}\,d(Z_{n+1}\circ f),
	\end{equation}
	where the hat symbol denotes an ommited summand.
	We then define local coordinates $X,Y,P$ on $PT^*\mb{K}^{n+1}$ such that $f_i=Y\circ f$ and
	\begin{align*}
		f_j=		&X_j\circ f,	& P_j	&=\displaystyle\frac{\nu_j}{\nu_i}		& (j=1,\dots,i-1);	\\
		f_{j+1}=	&X_j\circ f,	& P_j	&=\displaystyle\frac{\nu_{j+1}}{\nu_i} 	& (j=i,\dots,n+1).
	\end{align*}
	These are known as the \textbf{Darboux coordinates} of $PT^*\mb{K}^{n+1}$.
	In particular, Equation \eqref{concentrated rank} implies that the mapping $X\circ f=(X_1\circ f, \dots, X_n \circ f)$ shares the same singular set with $f$.
	Therefore, there exists a representative $X\circ f\colon U \to V$ of $X\circ f$ which is immersive outside of a nowhere dense subset $K$ of $U$.
\end{remark}

\begin{definition}
	Let $S,S' \subset \mb{K}^n$ be finite sets.
	Two integral map germs
	\begin{align*}
		F\colon (\mb{K}^n,S) \to (PT^*\mb{K}^{n+1},w), && F'\colon (\mb{K}^n,S') \to (PT^*\mb{K}^{n+1},w')
	\end{align*}
	are \textbf{Legendre equivalent} if there exists a diffeomorphism $\phi\colon (\mb{K}^n,S) \to (\mb{K}^n,S')$ and a Legendrian diffeomorphism $\Psi\colon (PT^*\mb{K}^{n+1},w) \to (PT^*\mb{K}^{n+1},w')$ such that $F'=\Psi\circ F\circ \phi^{-1}$.
\end{definition}

Arnol'd showed in \cite{Arnold_I}, \S 20.4 that a Legendrian diffeomorphism $\Psi\colon W \to W'$ is locally determined by a choice of Legendrian fibrations in the source and target, and a diffeomorphism $\psi$ between the base spaces.
Nonetheless, his proof was based on the fact that a Legendrian diffeomorphism preserves the fibres, and no explicit expression is given for $\Psi$.

\begin{theorem}\label{diffeomorphism fixes contactomorphism}
	Given a diffeomorphism $\psi\colon Z \to Z'$, the mapping
	\begin{funcion*}
		T^*Z \arrow[r] 					& T^*Z'	\\
		(z,\omega) \arrow[r,maps to]	& (\psi(z),\omega\circ d\psi_{\psi(z)}^{-1})
	\end{funcion*}
	induces a Legendrian diffeomorphism $\Psi\colon (PT^*Z,\Delta) \to (PT^*Z',\Delta')$.
\end{theorem}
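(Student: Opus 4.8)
The plan is to realise $\Psi$ as the map induced on projectivised cotangent bundles by the classical cotangent lift of $\psi$, and then to verify the two defining properties of a Legendrian diffeomorphism: compatibility with the fibrations, which is immediate from the construction, and preservation of the contact structure, which is a short computation. Write $\tau_Z\colon T^*Z\to Z$ and $\tau_{Z'}\colon T^*Z'\to Z'$ for the bundle projections and $\Phi\colon T^*Z\to T^*Z'$, $\Phi(z,\omega)=(\psi(z),\omega\circ(d\psi_z)^{-1})$, for the map in the statement. Then $\Phi$ is a diffeomorphism, with inverse the cotangent lift of $\psi^{-1}$; it covers $\psi$, that is $\tau_{Z'}\circ\Phi=\psi\circ\tau_Z$; and it restricts on each fibre to a linear isomorphism $T^*_zZ\to T^*_{\psi(z)}Z'$.

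Being fibrewise linear and invertible, $\Phi$ maps $T^*Z\setminus 0$ to $T^*Z'\setminus 0$ and intertwines the fibrewise scaling actions of $\mb{K}^*$, hence it descends to a diffeomorphism $\Psi\colon PT^*Z\to PT^*Z'$, given by $\Psi(z,[\omega])=(\psi(z),[\omega\circ(d\psi_z)^{-1}])$. From this formula $\pi'\circ\Psi=\psi\circ\pi$, where $\pi,\pi'$ are the Legendrian fibrations (the canonical projections) of Example \ref{projective cotangent}; this is the second defining property of a Legendrian diffeomorphism, and it is also what makes the following computation with $d\pi$ legitimate.

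For the contact structure I would use the intrinsic description of the distribution $\Delta$ of Example \ref{projective cotangent}: at $(z,[\omega])$ one has $\Delta_{(z,[\omega])}=(d\pi_{(z,[\omega])})^{-1}(\ker\omega)$, which depends only on the hyperplane $\ker\omega\subset T_zZ$, hence only on $[\omega]$, and which in the Darboux coordinates of Example \ref{projective cotangent} coincides with $\ker\alpha$. Now let $v\in T_{(z,[\omega])}PT^*Z$. Using $\pi'\circ\Psi=\psi\circ\pi$ we get $d\pi'(d\Psi(v))=d\psi_z(d\pi(v))$, so that
\[
d\Psi(v)\in\Delta'_{\Psi(z,[\omega])}\iff\bigl(\omega\circ(d\psi_z)^{-1}\bigr)\bigl(d\psi_z(d\pi(v))\bigr)=0\iff\omega\bigl(d\pi(v)\bigr)=0\iff v\in\Delta_{(z,[\omega])}.
\]
Since $d\Psi$ is an isomorphism, $d\Psi(\Delta)=\Delta'$, so $\Psi$ is a contactomorphism, and together with $\pi'\circ\Psi=\psi\circ\pi$ this makes $\Psi$ a Legendrian diffeomorphism. (Alternatively one can argue that $\Phi$ preserves the tautological $1$-form $\alpha$ on the cotangent bundles --- the classical fact that the cotangent lift is a symplectomorphism fixing the Liouville form --- and then descend to projectivisations, using that $\ker\alpha$ is invariant under fibrewise scaling and contains the Euler field.)

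The argument is essentially bookkeeping of identifications; the one point to get right is that the canonical contact structure on $PT^*Z$ is precisely the one whose fibre over $(z,[\omega])$ is pulled back from the hyperplane $\ker\omega\subset T_zZ$, so that it transforms correctly under a bundle map covering $\psi$. Beyond this there is no serious obstacle.
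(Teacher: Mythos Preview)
Your argument is correct and follows essentially the same route as the paper: both establish $\pi'\circ\Psi=\psi\circ\pi$ from the construction and then verify $d\Psi(\Delta)=\Delta'$ by using this commutation together with the description $\Delta_{(z,[\omega])}=(d\pi)^{-1}(\ker\omega)$. The only differences are cosmetic --- you spell out the descent from the cotangent lift to the projectivisation a bit more carefully, and you package the two inclusions into a single biconditional chain, whereas the paper checks $d\Psi(\Delta_q)\subseteq\Delta'_{\Psi(q)}$ and the reverse separately.
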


\begin{proof}
	Let $(z,\omega) \in T^*Z$: since $\psi$ is a diffeomorphism, $\omega \circ d\psi^{-1}_{\psi(z)} \neq 0$ and $\Psi$ is a well-defined diffeomorphism.
	Furthermore, it is clear that
	\begin{equation}
		\pi'\circ\Psi=\psi\circ\pi \label{legendrian equivalence}
	\end{equation}
	by construction.
	Therefore, we only need to show that $d\Psi_q(\Delta_q)=\Delta'_{\Psi(q)}$.

	Let $q=(z,[\omega])$ and $v \in \Delta_q$.
	Since $\pi$ is a submersion, $(\omega\circ d\pi_q)(v)=0$, and it follows from \eqref{legendrian equivalence} that
		\[(\omega\circ d\psi_{\psi(z)}^{-1}\circ d\pi'_{\Psi(q)})[d\Psi_q(v)]=0 \implies d\Psi_q(v) \in \Delta'_{\Psi(q)}\]
	Conversely, let $w \in \Delta'_{\Psi(q)}$.
	Since $\Psi$ is a diffeomorphism, there exists a unique $v \in T_qPT^*Z$ such that $w=d\Psi_q(v)$.
	By definition of $\Delta'$, we have
		\[(\omega\circ d\psi^{-1}_{\psi(z)}\circ d\pi'_{\Psi(q)})(w)=0\]
	By \eqref{legendrian equivalence}, this implies that $(\omega\circ d\pi_q)(v)=0$, from which follows that $w \in d\Psi_q(\Delta_q)$.
\end{proof}

\begin{remark}
	Let $\psi_t\colon (\mb{K}^{n+1},0) \to (\mb{K}^{n+1},0)$ be a smooth $1$-parameter family of diffeomorphisms.
	Given $t$ in an open neighbourhood $U \subseteq \mb{K}$ of $0$, we know by Theorem \ref{diffeomorphism fixes contactomorphism} that we can lift $\psi_t$ onto a Legendrian diffeomorphism $\Psi_t\colon (PT^*\mb{K}^{n+1},w) \to (PT^*\mb{K}^{n+1},0)$.
	Since $\pi\colon PT^*\mb{K}^{n+1} \to \mb{K}^{n+1}$ is a fibre bundle and $\mb{K}^{n+1}$ is a paracompact Hausdorff space, $\pi$ is a fibration (see \cite{Spanier}, Corollary 2.7.14), so it verifies the homotopy lifting property.
	Therefore, the $1$-parameter family $\Psi_t$ defined in this way is, indeed, a lift of the family $\psi_t$.
\end{remark}

\begin{corollary}
	\label{A-equivalence is compatible with frontals}
	Let $f,g\colon (\mb{K}^n,S) \to (\mb{K}^{n+1},0)$:
	\begin{enumerate}
		\item if $f$ is $\ms{A}$-equivalent to $g$ and $f$ is frontal, $g$ is frontal;
		\item if $f$ and $g$ are frontal, $\tilde f$ is Legendrian equivalent to $\tilde g$ if and only if $f$ is $\ms{A}$-equivalent to $g$.
	\end{enumerate}
\end{corollary}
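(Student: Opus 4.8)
The plan is to deduce both statements from Theorem~\ref{diffeomorphism fixes contactomorphism} together with the uniqueness of the Nash lift of a proper frontal. For part (1), suppose $g = \psi \circ f \circ \varphi^{-1}$ for diffeomorphisms $\varphi\colon (\mb{K}^n,S)\to(\mb{K}^n,S)$ and $\psi\colon(\mb{K}^{n+1},0)\to(\mb{K}^{n+1},0)$, and let $F\colon(\mb{K}^n,S)\to PT^*\mb{K}^{n+1}$ be an integral lift of $f$ with $f=\pi\circ F$. Apply Theorem~\ref{diffeomorphism fixes contactomorphism} to $\psi$ to obtain a Legendrian diffeomorphism $\Psi$ with $\pi'\circ\Psi = \psi\circ\pi$, and set $G = \Psi\circ F\circ\varphi^{-1}$. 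Then $\pi\circ G = \psi\circ\pi\circ F\circ\varphi^{-1} = \psi\circ f\circ\varphi^{-1} = g$, and $G$ is integral because $\Psi$ is a contactomorphism and $\varphi$ is a diffeomorphism (so $G^*\alpha' = (\varphi^{-1})^* F^* \Psi^* \alpha'$, and $\Psi^*\alpha'$ vanishes on $\Delta$, hence pulls back through the integral $F$ to zero). Thus $g$ is frontal. One should remark that properness is preserved too: $\Sigma(g)$ is the $\varphi$-image of $\Sigma(f)$, hence nowhere dense, so $g$ is proper.

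For part (2), the forward direction is essentially the same computation read off the Nash lifts: if $f$ is $\ms{A}$-equivalent to $g$ then the $\Psi$ and $\varphi$ above conjugate $F = \tilde f$ (up to the contact equivalence identifying an arbitrary integral lift with the Nash lift, as discussed after Equation~\eqref{eq: interpretation of frontal}) to an integral lift of $g$; since $g$ is proper its integral lift is unique up to that contact equivalence, and it is the Nash lift $\tilde g$. Hence $\tilde f$ and $\tilde g$ are Legendre equivalent via $(\Psi,\varphi)$. Conversely, suppose $\tilde f$ is Legendre equivalent to $\tilde g$, so there exist a diffeomorphism $\varphi$ of the source and a Legendrian diffeomorphism $\Psi$ with $\tilde g = \Psi\circ\tilde f\circ\varphi^{-1}$. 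By the definition of Legendrian diffeomorphism there is a diffeomorphism $\psi$ of the base with $\psi\circ\pi = \pi'\circ\Psi$. Compose with $\pi$:
\[
g = \pi\circ\tilde g = \pi\circ\Psi\circ\tilde f\circ\varphi^{-1} = \psi\circ\pi\circ\tilde f\circ\varphi^{-1} = \psi\circ f\circ\varphi^{-1},
\]
so $f$ is $\ms{A}$-equivalent to $g$.

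The one genuine subtlety — and the step I expect to require the most care — is the passage between an abstract integral lift and the Nash lift. The Nash lift is only defined once $\Sigma(f)$ is nowhere dense (the proper case), and the identification of any integral $F$ with $\tilde f = f\times[\nu]$ is via a fibre-bundle chart $\pi^{-1}(Z)\cong Z\times U$, i.e.\ it is a local contact equivalence that is not itself of the restricted ``base diffeomorphism'' form. So in part (2) I must check that transporting $\tilde f$ by $(\Psi,\varphi)$ yields an integral lift of $g$ whose projectivised conormal direction agrees with $[\nu^g]$; by uniqueness of the conormal for a proper frontal this is automatic once one knows the transported lift is integral and projects to $g$, which is exactly what the computation above gives. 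I would state this uniqueness explicitly (it follows from the discussion preceding Proposition~\ref{Ishikawa criterion frontality}) and invoke it, rather than re-deriving it. Everything else is a formal chase through $\pi'\circ\Psi = \psi\circ\pi$ and the contactomorphism property $\Psi^*\alpha'|_\Delta = 0$.
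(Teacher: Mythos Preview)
Your proof is correct and follows essentially the same route as the paper: lift the target diffeomorphism $\psi$ to a Legendrian diffeomorphism $\Psi$ via Theorem~\ref{diffeomorphism fixes contactomorphism}, then push the integral lift through $(\Psi,\varphi)$ for part~(1) and the ``only if'' of part~(2), and conversely project through $\pi$ using $\psi\circ\pi=\pi\circ\Psi$ for the ``if'' direction. The paper handles the ``only if'' of (2) with the phrase ``proved in a similar fashion'' and leaves implicit exactly the point you flag---that the transported integral lift $\Psi\circ\tilde f\circ\varphi^{-1}$ must coincide with $\tilde g$---so your explicit appeal to uniqueness of the Nash lift for proper frontals is a welcome clarification rather than a departure.

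One small comment: you are overcomplicating the ``subtlety''. The passage in the paper about $F$ being ``contact equivalent'' to $\tilde f$ is only about choosing a local trivialisation of the bundle; once $\Sigma(f)$ is nowhere dense, the projectivised conormal $[\nu]$ is genuinely unique, so any integral lift of a proper frontal \emph{is} its Nash lift in those coordinates---there is no residual contact equivalence to absorb. Your argument already establishes that $G=\Psi\circ\tilde f\circ\varphi^{-1}$ is integral and projects to $g$, and since you have noted $g$ is proper, $G=\tilde g$ on the nose. You can therefore drop the hedging about fibre-bundle charts and ``local contact equivalence not of base-diffeomorphism form'' and simply cite the uniqueness stated just before the definition of proper frontal.
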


\begin{proof}
	Assume that $f$ is frontal: there exist an integral map germ $F\colon (\mb{K}^n,S) \to PT^*\mb{K}^{n+1}$ such that $f=\pi\circ F$, where $\pi$ is the canonical bundle projection.
	Now let $\phi\colon (\mb{K}^n,S) \to (\mb{K}^n,S)$, $\psi\colon (\mb{K}^{n+1},0) \to (\mb{K}^{n+1},0)$ be diffeomorphisms such that $g=\psi\circ f\circ \phi^{-1}$: by Theorem \ref{diffeomorphism fixes contactomorphism}, we can lift $\psi$ onto a Legendrian diffeomorphism $\Psi\colon PT^*\mb{K}^{n+1} \to PT^*\mb{K}^{n+1}$.
	Therefore, the map $G=\Psi\circ F\circ \phi^{-1}$ is an integral map such that $\pi\circ G=g$, and $g$ is frontal.
	This proves the first item.

	For the second item, the ``only if" is proved in a similar fashion.
	For the ``if", let $\phi\colon (\mb{K}^n,S) \to (\mb{K}^n,S)$ and $\Psi\colon (PT^*\mb{K}^{n+1},w) \to (PT^*\mb{K}^{n+1},w)$ be diffeomorphisms such that $\tilde g=\Psi\circ \tilde f\circ \phi^{-1}$, with $\Psi$ Legendrian.
	By definition of Legendrian diffeomorphism, there exists a diffeomorphism $\psi\colon (\mb{K}^{n+1},0) \to (\mb{K}^{n+1},0)$ such that $\pi\circ \Psi=\psi\circ \pi$, from which follows that
		\[g=\pi\circ\tilde g=\pi\circ\Psi\circ \tilde f\circ \phi^{-1}=\psi\circ\pi\circ\tilde f\circ\phi^{-1}=\psi\circ f\circ \phi^{-1},\]
	proving the second item.
\end{proof}

\subsection{Unfolding frontal map germs}
The theory of Legendrian equivalence describes homotopic deformations of a pair $(\pi, F)$ via integral deformations, deformations $(F_u)$ of $F$ which are themselves integral for any fixed $u$.
Nonetheless, frontal deformations often fail to preserve the frontal nature across the parameter space, as showcased in Example \ref{E6 no frontal} below.

\begin{example}\label{E6 no frontal}
	Let $\gamma\colon (\mb{K},0) \to (\mb{K}^2,0)$ be the plane curve $t\mapsto (t^3,t^4)$.
	The $1$-parameter deformation $\gamma_s(t)=(t^3+st,t^4)$ verifies that $\gamma_s$ is frontal for all $s \in \mb{K}$.
	If $\omega$ is a $1$-form such that $\omega(d\gamma_s\circ \p t)=0$ for all $(t,s)$ in an open neighbourhood $U \subset \mb{K}^2$ of $(0,0)$, a simple computation shows that $\omega$ must be given in the form
		\[\omega_{(s,t)}=\alpha(t,s)(4t^3\,dX-(3t^2+s)\,dY)\]
	for some $\alpha \in \ms{O}_2$.
	Therefore, $\gamma_s$ does not yield an integral deformation at $s=0$.
\end{example}

\begin{definition}
	Let $f\colon (\mb{K}^n,S) \to (\mb{K}^{n+1},0)$ be a frontal germ.
	An unfolding $F\colon (\mb{K}^n\times\mb{K}^d,S\times\{0\}) \to (\mb{K}^{n+1}\times\mb{K}^d,0)$ of $f$ is \textbf{frontal} if it is frontal as a map germ.
\end{definition}

\begin{theorem}\label{frontal unfoldings and integral deformations}
	Let $f\colon (\mb{K}^n,S) \to (\mb{K}^{n+1},0)$ be a frontal map germ.
	A $d$-parameter unfolding $F=(f_\lambda,\lambda)$ of $f$ is frontal if and only if $\tilde{f_\lambda}$ is an integral deformation of $\tilde f$.
\end{theorem}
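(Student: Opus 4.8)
The plan is to unwind both sides of the claimed equivalence in terms of the existence of an appropriate integral lift over the parameter space $\mb{K}^d$, and then use the one-to-one correspondence between a proper frontal and its Nash lift (established after Equation \eqref{eq: interpretation of frontal}) to identify the two conditions. First I would record what it means for $\tilde{f_\lambda}$ to be an integral deformation of $\tilde f$: by definition this is a family $(\tilde{f_\lambda})$ with $\tilde{f_\lambda}$ integral for each fixed $\lambda$, $\tilde{f_0}=\tilde f$, and depending smoothly on $\lambda$. Writing $\tilde{f_\lambda}=(f_\lambda,[\nu_\lambda])$ in Darboux coordinates, integrality of each $\tilde{f_\lambda}$ is the condition $\nu_\lambda(df_\lambda\circ\xi)=0$ for all $\xi\in\theta_n$, with $\nu_\lambda$ depending smoothly on $\lambda$; here $\nu_\lambda$ is uniquely determined by $f_\lambda$ precisely because $f_\lambda$ is proper. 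The key observation is that such a smooth family $(\nu_\lambda)$ exists if and only if the single $1$-form on source-plus-parameter space that it defines annihilates $dF\circ\zeta$ for every vector field $\zeta$ on $(\mb{K}^n\times\mb{K}^d, S\times\{0\})$ — i.e.\ if and only if $F$ itself admits an integral lift.

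Concretely, for the ``if'' direction I would assume $\tilde{f_\lambda}$ is an integral deformation and build an integral lift of $F$ directly. Setting $F=(f_\lambda,\lambda)$ and using coordinates $(x,\lambda)$ on the source, $(Z,\lambda)$ on the target, I would take $\tilde F(x,\lambda)=(F(x,\lambda),[\,\nu_\lambda(x) + 0\cdot d\lambda\,])$, where the $d\lambda$-components of the defining form are set to zero. The pullback $\tilde F^*\alpha$ then splits into a part coming from differentiating in the $x$-directions, which vanishes by integrality of each $\tilde{f_\lambda}$, and a part involving $\partial/\partial\lambda_k$; because the last $d$ target coordinates of $F$ are exactly the parameters $\lambda_k$ and the corresponding covector components are zero, this second part also vanishes. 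Hence $\tilde F$ is integral and $F=\Pi\circ\tilde F$ for the appropriate Legendrian fibration $\Pi\colon PT^*\mb{K}^{n+d+1}\to\mb{K}^{n+d+1}$, so $F$ is frontal; properness of $F$ follows since $\Sigma(F)$ projects onto $\Sigma(f_\lambda)$, which is nowhere dense, or alternatively from Proposition \ref{Ishikawa criterion frontality} once one checks $J_F$ is principal. For the ``only if'' direction I would start from an integral lift $\tilde F=(F,[\mu])$ of the frontal unfolding $F$; the point is to show the parameter-direction components of $\mu$ can be arranged to vanish, after which restricting $\mu$ to each slice $\lambda=\mathrm{const}$ gives a smoothly varying family $\nu_\lambda$ with $\nu_\lambda(df_\lambda\circ\xi)=0$, and uniqueness (properness of each $f_\lambda$, which holds because $F$ is proper) identifies it with the Nash lift $\tilde{f_\lambda}$, exhibiting the integral deformation.

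The main obstacle is this last normalisation step: an arbitrary integral lift of $F$ need not have vanishing $d\lambda$-components, so one must argue that the relation $\tilde F^*\alpha=0$, combined with the fact that $F$ is an unfolding (so $dF(\partial/\partial\lambda_k)$ has a $\partial/\partial\lambda_k$ component in the target), forces those components to be determined — and in fact one can rescale within the projective class $[\mu]$ and use that the coefficient of $d\lambda_k$ in $F^*\alpha$ picks out precisely the covector entry dual to $\lambda_k$, which must therefore be zero. I expect the bookkeeping here, keeping careful track of which Darboux chart is used (cf.\ Remark \ref{rank is stored in the X}) and ensuring the construction is independent of that choice, to be the technical heart of the argument; everything else is a direct translation between ``$F$ integral'' and ``each $\tilde{f_\lambda}$ integral, smoothly in $\lambda$'' via the proper-frontal/Nash-lift dictionary and Corollary \ref{A-equivalence is compatible with frontals}.
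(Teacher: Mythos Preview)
Your ``if'' direction (integral deformation $\Rightarrow$ $F$ frontal) contains a genuine gap. You propose to lift $F$ by extending $\nu_\lambda$ with zero $d\lambda$-components, and you claim that the $\partial/\partial\lambda_k$ part of $\tilde F^*\alpha$ then vanishes because the last $d$ target coordinates of $F$ are the bare parameters. But the first $n+1$ target coordinates of $F$ are $f_\lambda$, which \emph{does} depend on $\lambda$: the coefficient of $d\lambda^k$ in $\tilde F^*\alpha$ is
\[
\sum_{i=1}^{n+1}(\nu_\lambda)_i\,\frac{\partial (f_\lambda)_i}{\partial \lambda_k}+0\cdot 1
=\nu_\lambda\!\left(\frac{\partial f_\lambda}{\partial\lambda_k}\right),
\]
and there is no reason for this to vanish. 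The paper's proof handles exactly this point by \emph{not} taking the $d\lambda$-components to be zero: it writes down the explicit correction
\[
\nu=\sum_{j=1}^n(P_j\circ\tilde f_\lambda)\Bigl(dX^j-\sum_{k}\tfrac{\partial}{\partial\lambda_k}(X_j\circ f_\lambda)\,d\lambda^k\Bigr)-dY+\sum_k\tfrac{\partial}{\partial\lambda_k}(Y\circ f_\lambda)\,d\lambda^k,
\]
and then checks directly that both $\nu(dF\circ\partial/\partial u_i)$ and $\nu(dF\circ\partial/\partial\lambda_i)$ vanish, the latter precisely because the inserted $d\lambda$-terms cancel the unwanted $\partial f_\lambda/\partial\lambda_k$ contributions.

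Your ``only if'' direction is closer to the paper's, but you have the bookkeeping backwards. Starting from an integral lift $\tilde F=(F,[\mu])$, the $d\lambda_k$-component of $\mu$ is \emph{determined} (by the same computation as above) to equal $-\sum_i\mu_i\,\partial(f_\lambda)_i/\partial\lambda_k$, which is generically nonzero; it cannot be ``arranged to vanish'' by rescaling in the projective class. Fortunately this step is unnecessary: what the paper does is simply take the $dZ$-part of $\mu$ as a family $\nu_\lambda$, observe that the $dx^j$-coefficients of $\tilde F^*\alpha=0$ say exactly $\nu_\lambda(df_\lambda\circ\xi)=0$, and then invoke properness of $f$ to identify $f\times[\nu_0]$ with the Nash lift $\tilde f$, so that $(f_\lambda,[\nu_\lambda])$ is an integral deformation of $\tilde f$. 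No normalisation of the $d\lambda$-components is needed.
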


\begin{proof}
	Let $F$ be a frontal $d$-parameter unfolding for $f$: there is a $\nu \in \Omega^1(F)$ such that $\nu(dF\circ \eta)=0$ for all $\eta \in \theta_{n+d}$.
	If we set $\nu_0=\nu|_{\lambda=0}$, we can write
		\[\nu_{(x,y,\lambda)}=(\nu_0)_{(x,y)}+\sum^d_{j=1}\lambda_j(\nu_j)_{(x,y,\lambda)}\]
	for some $\nu_1,\dots,\nu_j \in (\mb{K}^n,S) \to T^*\mb{K}^{n+1}$.
	Therefore, $\nu$ may be regarded as a $d$-parameter deformation of $\nu_0$ and the Nash lift of $f_\lambda$,
		\begin{equation}\label{lift frontal unfolding}
			(x,y,\lambda) \mapsto (f_\lambda(x,y),[\nu_{(x,y,\lambda)}])
		\end{equation}
	is an integral $d$-parameter deformation of $f\times [\nu_0]$.
	Since $f\times [\nu_0]$ is an integral map, $\nu_0(df\circ \xi)=0$ for all $\xi \in \theta_n$.
	Properness of $f$ then implies that $f\times [\nu_0]=\tilde f$, and thus the map germ \eqref{lift frontal unfolding} is an integral deformation of $\tilde f$.

	Conversely, let $\tilde f_\lambda$ be an integral deformation of $\tilde f$.
	Taking coordinates $(u,\lambda)$ in the source and Darboux coordinates in the target, the integrability condition becomes
		\[\cvf{u_j}(Y\circ f_\lambda)=(P_1\circ \tilde f_\lambda)\cvf{u_j}(X_1\circ f_\lambda)+\dots+(P_n\circ \tilde f_\lambda)\cvf{u_j}(X_n\circ f_\lambda)\]
	for $j=1,\dots,n$.
	Consider the differential form $\nu \in \Omega^1(F)$ given by
		\[\sum^n_{j=1}(P_j\circ \tilde f_\lambda)\left(dX^j-\sum^d_{k=1}\cvf{\lambda_k}(X_j\circ f_\lambda)\,d\lambda^k\right)-dY+\sum^d_{k=1}\cvf{\lambda_k}(Y\circ f_\lambda)\,d\lambda^k\]
	Using the integrability condition above, we have
		\begin{align*}
			\nu\left(dF\circ\cvf{u_i}\right)=		&\sum^n_{j=1}(P_j\circ \tilde f_\lambda)\frac{\p (X_j\circ f_\lambda)}{\p u_i}-\frac{\p (Y\circ f_\lambda)}{\p u_i}=0;\\
			\nu\left(dF\circ \cvf{\lambda_i}\right)=&\sum^n_{j=1}(P_j\circ \tilde f_\lambda)\left(\frac{\p (X_j\circ \tilde f_\lambda)}{\p \lambda_i}-\frac{\p (X_j\circ \tilde f_\lambda)}{\p \lambda_i}\right)-\\
            &-\frac{\p (Y\circ f_\lambda)}{\p \lambda_i}+\frac{(Y\circ f_\lambda)}{\p \lambda_i}=0
		\end{align*}
	Therefore, $\nu(dF\circ \xi)=0$ for all $\xi \in \theta_{n+d}$ and $F$ is frontal.
\end{proof}

\begin{remark}
	Properness of $f$ is required for the ``if" direction, since $\widetilde{f_u}$ is not guaranteed to be a deformation of $\tilde f$, even if it is integral.
	Nonetheless, the ``only if" direction does not require properness.
\end{remark}

The space of \textbf{infinitesimal integral deformations} of an integral $\tilde f$, defined by Ishikawa in \cite{Ishikawa05}, is given by
\begin{align*}
	\theta_I(\tilde f)=\{v_0(\tilde f_t): \tilde f_t \text{ integral }, \tilde f_0=\tilde f\}; && v_0(\tilde f_t)=\left.\frac{d\tilde f_t}{dt}\right|_{t=0}.
\end{align*}
This space is linear when $\tilde f$ has corank at most $1$ (\cite{Ishikawa05}), but it is known to have a conical structure in higher coranks.
Counterexamples can be constructed using a similar procedure as in \cite{Ishikawa96}.
We also set $T\ms{L}_e\tilde f$ as the subspace of $\theta_I(\tilde f)$ given by those $\tilde f_t$ which are trivial Legendrian deformations of $\tilde f$.

\begin{definition}
	Let $f\colon (\mb{K}^n,S) \to (\mb{K}^{n+1},0)$ be a frontal map germ of integral corank at most $1$.
	We define the space of \textbf{infinitesimal frontal deformations} of $f$ as
		\[\ms{F}(f)=\{v_0(f_t): (t, f_t) \text{ frontal}, f_0=f\}.\]
\end{definition}
As shown in Theorem \ref{legendrian and frontal codimension} below, $\ms{F}(f)$ is the linear projection of $\theta_I(\tilde f)$.
Therefore, if the integral corank of $f$ is at most $1$, $\ms{F}(f)$ is $\mb{K}$-linear; for this reason, any results involving $\ms{F}(f)$ will implicitly assume that $f$ has integral corank at most $1$.
An alternative, direct proof is also given for corank $1$ frontal map germs in Remark \ref{F(f) is linear} below.

\begin{lemma}
	Given a frontal map germ $f\colon (\mb{K}^n,S) \to (\mb{K}^{n+1},0)$, $T\ms{A}_ef \subseteq \ms{F}(f)$.
\end{lemma}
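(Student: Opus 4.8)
The plan is to show that every infinitesimal $\ms{A}$-triviality vector field applied to $f$ produces a deformation that stays frontal, so that $T\ms{A}_ef$ lands inside $\ms{F}(f)$ by definition. Recall that $T\ms{A}_ef = tf(\theta_n) + \omega f(\theta_{n+1})$, i.e.\ a typical element is $v_0 = df\circ\xi + \eta\circ f$ with $\xi\in\theta_n$ and $\eta\in\theta_{n+1}$. I would realize such a $v_0$ as the velocity field of an honest one-parameter family: choose a one-parameter family of source diffeomorphisms $\phi_t$ with $\dot\phi_0 = \xi$ and a one-parameter family of target diffeomorphisms $\psi_t$ with $\dot\psi_0 = \eta$, and set $f_t = \psi_t\circ f\circ\phi_t^{-1}$. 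Then $f_0 = f$ and $\frac{d}{dt}\big|_{t=0} f_t = v_0$, so it only remains to check that the unfolding $F = (f_t, t)$ is frontal.

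The key step is precisely the frontality of $F$, and here I would invoke Corollary~\ref{A-equivalence is compatible with frontals}(1): for each fixed $t$, $f_t$ is $\ms{A}$-equivalent to the frontal germ $f$, hence $f_t$ is frontal. But this only gives frontality fibrewise; what is actually needed is that $F$ is frontal \emph{as a map germ} on $(\mb{K}^n\times\mb{K},S\times\{0\})$. To get this I would lift the target family $\psi_t$ to a one-parameter family of Legendrian diffeomorphisms $\Psi_t$ of $PT^*\mb{K}^{n+1}$ using Theorem~\ref{diffeomorphism fixes contactomorphism} (and the accompanying remark on homotopy lifting, which guarantees $\Psi_t$ is a genuine deformation of $\Psi_0 = \mathrm{id}$). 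Writing $\tilde f$ for the Nash lift of $f$, the family $\Psi_t\circ\tilde f\circ\phi_t^{-1}$ is then an integral deformation of $\tilde f$: each member is integral because $\Psi_t$ is Legendrian (pulls back the contact form to a multiple of the contact form) and $\phi_t^{-1}$ is a source diffeomorphism, and the family depends smoothly on $t$ with value $\tilde f$ at $t=0$. By Theorem~\ref{frontal unfoldings and integral deformations}, $F = (f_t, t)$ is therefore frontal, and consequently $v_0 = v_0(f_t) \in \ms{F}(f)$.

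The main obstacle I anticipate is the careful bookkeeping at the level of the projective cotangent bundle: one must check that $\Psi_t\circ\tilde f\circ\phi_t^{-1}$ is indeed the Nash lift of $f_t$ (up to the identification used in defining the Nash lift) and not merely \emph{some} integral lift, so that Theorem~\ref{frontal unfoldings and integral deformations} applies on the nose. This is where properness of $f$ — standing assumption throughout — is used: it forces the integral lift of $f_t$ to be unique, hence equal to $\widetilde{f_t}$, and it also ensures the singular set condition in Remark~\ref{rank is stored in the X} is available if one prefers to argue in Darboux coordinates. A secondary, purely technical point is the multigerm case $S$ with more than one point: the source diffeomorphism $\phi_t$ must be chosen fixing $S$ setwise (which is automatic from $\xi\in\theta_n$ vanishing appropriately, or can be arranged germ-by-germ), but this causes no real difficulty. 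Once these identifications are in place, the inclusion $T\ms{A}_ef\subseteq\ms{F}(f)$ follows immediately.
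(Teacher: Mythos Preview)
Your proposal is correct and follows essentially the same route as the paper: realise an element of $T\ms{A}_ef$ as the initial velocity of $f_t=\psi_t\circ f\circ\phi_t^{-1}$, lift $\psi_t$ to a family of Legendrian diffeomorphisms $\Psi_t$ via Theorem~\ref{diffeomorphism fixes contactomorphism}, observe that $\Psi_t\circ\tilde f\circ\phi_t^{-1}$ is an integral deformation of $\tilde f$, and conclude from Theorem~\ref{frontal unfoldings and integral deformations} that $(f_t,t)$ is frontal. Your extra worry about whether $\Psi_t\circ\tilde f\circ\phi_t^{-1}$ coincides with the Nash lift $\widetilde{f_t}$ is slightly over-cautious: the converse direction of Theorem~\ref{frontal unfoldings and integral deformations} only needs \emph{some} integral deformation projecting to $f_t$, and in any case properness of $f_t$ (inherited from $f$ by $\ms{A}$-equivalence) forces uniqueness of the integral lift, so the identification holds automatically.
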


\begin{proof}
	Let $\phi_t\colon (\mb{K}^n,S) \to (\mb{K}^n,S)$, $\psi_t\colon (\mb{K}^{n+1},0) \to (\mb{K}^{n+1},0)$ be two smooth $1$-parameter families of diffeomorphisms and $f_t=\psi_t\circ f\circ \phi^{-1}_t$.
	It is clear by construction that the vector field germ given by $f_t$ is in $T\ms{A}_ef$.
	
	By Theorem \ref{diffeomorphism fixes contactomorphism}, we can lift $\psi_t$ onto a smooth $1$-parameter family $\Psi_t$ of Legendrian diffeomorphisms, in which case we can lift $f_t$ onto an integral deformation $\widetilde{f_t}=\Psi_t\circ \tilde f \circ \phi_t^{-1}$.
	Using Theorem \ref{frontal unfoldings and integral deformations}, we then see that the unfolding $F=(f_t,t)$ is frontal.
	Therefore, the vector field germ given by $f_t$ is in $\ms{F}(f)$, and thus $T\ms{A}_ef \subseteq \ms{F}(f)$.
\end{proof}

\begin{theorem}\label{legendrian and frontal codimension}
	Let $f\colon (\mb{K}^n,0) \to (\mb{K}^{n+1},0)$ be a frontal map germ and $\pi\colon PT^*\mb{K}^{n+1} \to \mb{K}^{n+1}$ be the canonical bundle projection.
	The mapping $t\pi\colon \theta_I(\tilde f) \to \ms{F}(f)$ given by $t\pi(\xi)=d\pi\circ \xi$ is a $\mb{K}$-linear isomorphism and induces an isomorphism
	\begin{equation}\label{eq: frontal codimension}
		\Pi\colon \frac{\ms{F}(f)}{T\ms{A}_ef} \longrightarrow \frac{\theta_I(\tilde f)}{T\ms{L}_e\tilde f}.
	\end{equation}
\end{theorem}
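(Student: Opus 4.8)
The plan is to show that $t\pi$ is a well-defined $\mb{K}$-linear isomorphism and then that it maps $T\ms{L}_e\tilde f$ bijectively onto $T\ms{A}_e f$, so that it descends to an isomorphism of the quotient spaces; reading that isomorphism in the reverse direction yields the map $\Pi$ of \eqref{eq: frontal codimension}. Well-definedness and surjectivity of $t\pi$ follow directly from Theorem \ref{frontal unfoldings and integral deformations}. If $\xi = v_0(\tilde f_t)$ for an integral deformation $\tilde f_t$ of $\tilde f$, then by the chain rule $t\pi(\xi) = d\pi\circ\xi = v_0(\pi\circ\tilde f_t)$, and since each $\tilde f_t$ is integral, the unfolding $(\pi\circ\tilde f_t, t)$ is frontal by Theorem \ref{frontal unfoldings and integral deformations}; hence $t\pi(\xi)\in\ms{F}(f)$. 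Conversely any $v\in\ms{F}(f)$ is $v = v_0(f_t)$ for some frontal unfolding $(f_t,t)$, whose Nash lift $\widetilde{f_t}$ is an integral deformation of $\tilde f$ by the same theorem, with $t\pi(v_0(\widetilde{f_t})) = v_0(\pi\circ\widetilde{f_t}) = v_0(f_t) = v$. Linearity of $t\pi$ is clear, as $d\pi$ is a bundle map and $\theta_I(\tilde f)$, $\ms{F}(f)$ are $\mb{K}$-linear under the standing integral corank $\leq 1$ hypothesis.

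The heart of the argument is injectivity, which is where properness of $f$ enters through Remark \ref{rank is stored in the X}. Suppose $\xi = v_0(\tilde f_t)$ with $t\pi(\xi) = 0$. Passing to Darboux coordinates $X,Y,P$ in the target and writing $f_t = \pi\circ\tilde f_t$, this says $v_0(X_k\circ f_t) = 0$ for $k = 1,\dots,n$ and $v_0(Y\circ f_t) = 0$, and it remains to show that the components $v_0(P_k\circ\tilde f_t)$ vanish as well. For each fixed $t$, integrality of $\tilde f_t$ gives, exactly as in the proof of Theorem \ref{frontal unfoldings and integral deformations},
\[\cvf{u_j}(Y\circ f_t) = \sum_{k=1}^{n} (P_k\circ\tilde f_t)\,\cvf{u_j}(X_k\circ f_t), \qquad j = 1,\dots,n.\]
Differentiating this identity with respect to $t$ at $t = 0$ and inserting $v_0(X_k\circ f_t) = 0$ and $v_0(Y\circ f_t) = 0$ collapses it to $\sum_{k} v_0(P_k\circ\tilde f_t)\,\cvf{u_j}(X_k\circ f) = 0$ for every $j$. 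By Remark \ref{rank is stored in the X} the Jacobian matrix $\bigl(\cvf{u_j}(X_k\circ f)\bigr)_{j,k}$ is invertible outside a nowhere dense set, so each $v_0(P_k\circ\tilde f_t)$ vanishes on a dense open set and hence, being smooth (resp. holomorphic), vanishes identically. Therefore $\xi = 0$ and $t\pi$ is injective.

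It remains to see that $t\pi(T\ms{L}_e\tilde f) = T\ms{A}_e f$. If $\tilde f_t = \Psi_t\circ\tilde f\circ\phi_t^{-1}$ is a trivial Legendrian deformation of $\tilde f$, then each $\Psi_t$ is Legendrian and therefore covers a diffeomorphism $\psi_t$ of $\mb{K}^{n+1}$, so $\pi\circ\tilde f_t = \psi_t\circ f\circ\phi_t^{-1}$ and $t\pi(v_0(\tilde f_t)) = v_0(\psi_t\circ f\circ\phi_t^{-1})\in T\ms{A}_e f$. Conversely, every element of $T\ms{A}_e f$ has the form $v_0(\psi_t\circ f\circ\phi_t^{-1})$ for appropriate one-parameter families of diffeomorphisms $\phi_t,\psi_t$; lifting $\psi_t$ to a family of Legendrian diffeomorphisms $\Psi_t$ by Theorem \ref{diffeomorphism fixes contactomorphism} — exactly as in the proof of the Lemma preceding this theorem — yields a trivial Legendrian deformation $\Psi_t\circ\tilde f\circ\phi_t^{-1}$ that $t\pi$ sends to it. Since $t\pi$ is a linear isomorphism carrying $T\ms{L}_e\tilde f$ onto $T\ms{A}_e f$, it induces an isomorphism of the quotients, and $\Pi$ is its inverse. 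The main obstacle is the injectivity step: it is the only point where the specific structure of the Nash lift (properness, packaged in Remark \ref{rank is stored in the X}) is genuinely needed, and one has to check carefully that the differentiated integrality relation forces the $P$-components of an infinitesimal deformation to be determined by its $X$- and $Y$-components.
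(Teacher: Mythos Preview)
Your proof is correct and follows essentially the same route as the paper's: well-definedness and surjectivity via Theorem \ref{frontal unfoldings and integral deformations}, injectivity by reducing to the linear system $\sum_k v_0(P_k\circ\tilde f_t)\,\partial_{u_j}(X_k\circ f)=0$ and invoking Remark \ref{rank is stored in the X}, and the identification $t\pi(T\ms{L}_e\tilde f)=T\ms{A}_ef$ via Theorem \ref{diffeomorphism fixes contactomorphism}. The only cosmetic difference is that the paper writes the deformation explicitly as $\tilde f_t=\tilde f+t\tilde h$ and manipulates the integrality identity algebraically before setting $t=0$, whereas you differentiate the identity in $t$ directly; both yield the same homogeneous system and the same density argument.
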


\begin{proof}
	Let $\xi \in \theta_I(\tilde f)$ and $\tilde f_t$ be an integral $1$-parameter deformation of $\tilde f$ and $\xi=v_0(\tilde f_t)$: by Theorem \ref{frontal unfoldings and integral deformations}, $F(t,x)=(t,(\pi \circ \tilde f_t)(x))$ is a frontal $1$-parameter unfolding of $f$.
	Furthermore, using the chain rule, we see that $v_0(\pi\circ \tilde f_t)=t\pi[v_0(\tilde f_t)]$, so $t\pi[\theta_I(\tilde f)] \subseteq \ms{F}(f)$ and $t\pi\colon \theta_I(\tilde f) \to \ms{F}(f)$ is well-defined.
	Conversely, let $\xi \in \ms{F}(f)$ and $(t,f_t)$ be a frontal $1$-parameter deformation of $f$ with $\xi=v_0(f_t)$: by Theorem \ref{frontal unfoldings and integral deformations}, we can lift $f_t$ onto an integral $1$-parameter deformation $\tilde f_t$ of $\tilde f$.
	Using the chain rule, it then follows that $\xi \in t\pi[\theta_I(\tilde f)]$, so $t\pi[\theta_I(\tilde f)]=\ms{F}(f)$.

	We move onto injectivity.
	Let $\tilde f_t(x)=\tilde f(x)+t\tilde h(x,t)$ be an integral $1$-parameter deformation of $\tilde f$ with $(\pi\circ \tilde f_t)(x)=f(x)+th(x,t)$.
	If we assume that $\xi=v_0(\tilde f_t)\in \ker t\pi$, then
		\[0=\left.\frac{df_t}{dt}\right|_{t=0}=[h(x,t)+th_t(x,t)]_{t=0}=h(x,0) \implies h(x,t)=tg(x,t).\]
	Our goal is to show that we can write $\tilde h(x,t)=t\tilde g(x,t)$ for some $\tilde g$, so that $v_0(\tilde f_t)=0$ and thus $\ker t\pi=\{0\}$.

	Since $\tilde f_t$ is an integral deformation of $\tilde f$, it verifies the identity
		\[d(Y\circ f_t)=\sum^n_{j=1}(P_j\circ \tilde f_t)\,d(X_j\circ f_t)\]
	Taking the coefficient of $dx^k$ on both sides of the equation and simplifying yields
		\[t\frac{\p (Y\circ g)}{\p x_k}=\sum^n_{j=1}\left[(P_j\circ \tilde h)\frac{\p (X_j\circ f_t)}{\p x_k}+t(P_j\circ \tilde f)\frac{\p (X_j\circ g)}{\p x_k}\right].\]
	Taking $t=0$ gives us the homogeneous system of equations
		\[0=\sum^n_{j=1}\frac{\p (X_i\circ f)}{\p x_k}(x)(P_j\circ \tilde h)(x,0)\]
	for $k=1,\dots,n$.
	Using the observation from Remark \ref{rank is stored in the X} and the continuity of $P_1\circ \tilde h,\dots,P_n\circ \tilde h$, we conclude that $(P_1\circ \tilde h)(x,0)=\dots=(P_n\circ \tilde h)(x,0)=0$ and thus $\tilde h(x,t)=t\tilde g(x,t)$.

	It only remains to show that $t\pi(T\ms{L}_e\tilde f)=T\ms{A}_ef$.
	Let $\xi \in T\ms{L}_e\tilde f$: there exist $1$-parameter families $\phi_t\colon (\mb{K}^n,S) \to (\mb{K}^n,S)$, $\Psi_t\colon (PT^*\mb{K}^{n+1},w) \to (PT^*\mb{K}^{n+1},w)$ of diffeomorphisms such that $\xi=v_0(\Psi_t\circ \tilde f\circ \phi^{-1}_t)$, with $\Psi_t$ Legendrian.
	Since $\Psi_t$ is Legendrian for all $t$ in a neighbourhood $U \subseteq \mb{K}$ of $0$, there exists a $1$-parameter family $\psi_t\colon (\mb{K}^{n+1},0) \to (\mb{K}^{n+1},0)$ of diffeomorphisms such that $\pi\circ\Psi_t=\psi_t\circ \pi$ for all $t \in U$.
	We then have that $v_0(\psi_t\circ f\circ \phi^{-1}_t)=t\pi[v_0(\Psi_t\circ \tilde f\circ \phi^{-1}_t)]=t\pi(\xi)$, hence $t\pi(\xi) \in T\ms{A}_ef$.

	Conversely, if $\xi \in T\ms{A}_ef$, there exist $1$-parameter families $\phi_t\colon (\mb{K}^n,S) \to (\mb{K}^n,S)$, $\psi_t\colon (\mb{K}^{n+1},0) \to (\mb{K}^{n+1},0)$ of diffeomorphisms such that $\xi=v_0(\psi_t\circ f\circ \phi^{-1}_t)$.
	Using Theorem \ref{diffeomorphism fixes contactomorphism}, there exists a $1$-parameter family of Legendrian diffeomorphisms $\Psi_t\colon (PT^*\mb{K}^{n+1},w) \to (PT^*\mb{K}^{n+1},w)$ such that $\pi\circ \Psi_t=\psi_t\circ\pi$, and thus we can lift $\xi$ onto $v_0(\Psi_t\circ \tilde f\circ \phi^{-1}_t) \in T\ms{L}_e\tilde f$, whose image via $t\pi$ is $\xi$.
\end{proof}

\begin{remark}\label{corank 1 infinitesimal integral deformation}
	Let $f\colon (\mb{K}^n,0) \to (\mb{K}^{n+1},0)$ be a frontal map germ: Theorem \ref{legendrian and frontal codimension} states that $\ms{F}(f)=t\pi[\theta(\tilde f)]$.
	Since $\tilde f$ has corank $1$, a resut by Ishikawa \cite{Ishikawa05} states that
		\[\theta_I(\tilde f)=\{\xi \in \theta(\tilde f): \xi^*\tilde\alpha=0\},\]
	wherein $\tilde \alpha$ denotes the natural lifting of the contact form in $PT^*\mb{K}^{n+1}$.
	Taking Darboux coordinates in $PT^*\mb{K}^{n+1}$,
	\begin{equation}\label{integral corank 1 F(f)}
		\xi\in \ms{F}(f) \iff d\xi_{n+1}-\sum^n_{i=1}(P_i\circ \tilde f)\,d\xi_i \in \ms{O}_n\,d(f^*\ms{O}_n)
	\end{equation}
	In particular, if $f$ has corank $1$ and it is given in prenormal form, Equation \eqref{integral corank 1 F(f)} is equivalent to
		\[\frac{\p \xi_{n+1}}{\p y}-\sum^{n-1}_{j=1}P_j\frac{\p \xi_j}{\p y}+\mu\frac{\p \xi_n}{\p y} \in \ms{O}_n\{p_y\},\]
	where $P_1,\dots,P_{n-1}$ are given as in Equation \eqref{prenormal nash lift}.
\end{remark}

\begin{definition}
	The \textbf{frontal codimension} of $f$ is defined as the dimension of $T^1_{\ms{F}_e}f=\ms{F}(f)/T\ms{A}_ef$.
	We say $f$ is \textbf{$\ms{F}$-finite} or has \textbf{finite frontal codimension} if $\dim T^1_{\ms{F}_e}f < \infty$.
\end{definition}

\subsection{Frontal versality and stability}
In the previous subsection, we formulated the notions of integral deformation and Legendrian codimension purely in terms of frontal unfoldings.
We now show that Ishikawa's results concerning the Legendrian stability and versality of pairs from \cite{Ishikawa05} have a direct parallel in our theory of frontal deformations.

\begin{definition}
	A frontal map germ $f\colon (\mb{K}^n,S) \to (\mb{K}^{n+1},0)$ is \textbf{stable as a frontal} or \textbf{$\ms{F}$-stable} if every frontal unfolding of $f$ is $\ms{A}$-trivial.
\end{definition}

\begin{corollary}\label{trivial Legendrian deformation}
	A frontal map germ $f\colon (\mb{K}^n,S) \to (\mb{K}^{n+1},0)$ is stable as a frontal if and only if $\tilde f$ is Legendrian stable.
\end{corollary}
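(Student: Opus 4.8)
The plan is to play the two stability conditions off against each other using the dictionary already in place: by Theorem~\ref{frontal unfoldings and integral deformations} a frontal unfolding $F=(f_\lambda,\lambda)$ of $f$ corresponds to an integral deformation $\widetilde{f_\lambda}$ of $\tilde f$, and by Theorem~\ref{diffeomorphism fixes contactomorphism} a family of target diffeomorphisms lifts to a family of Legendrian diffeomorphisms, while conversely every Legendrian diffeomorphism descends by $\pi$ to a diffeomorphism of the base. Throughout I use that, following Ishikawa \cite{Ishikawa05}, $\tilde f$ is Legendrian stable precisely when every integral deformation of $\tilde f$ is a trivial Legendrian deformation.

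\emph{Legendrian stability of $\tilde f$ implies $\ms{F}$-stability of $f$.} Let $F=(f_\lambda,\lambda)$ be an arbitrary frontal unfolding of $f$. By Theorem~\ref{frontal unfoldings and integral deformations}, $\widetilde{f_\lambda}$ is an integral deformation of $\tilde f$, so Legendrian stability provides smooth families $\phi_\lambda\colon(\mb{K}^n,S)\to(\mb{K}^n,S)$ and Legendrian $\Psi_\lambda\colon PT^*\mb{K}^{n+1}\to PT^*\mb{K}^{n+1}$, both unfolding the identity, with $\widetilde{f_\lambda}=\Psi_\lambda\circ\tilde f\circ\phi_\lambda^{-1}$. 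Each $\Psi_\lambda$ descends, by definition of Legendrian diffeomorphism, to a diffeomorphism $\psi_\lambda$ of $(\mb{K}^{n+1},0)$ with $\pi\circ\Psi_\lambda=\psi_\lambda\circ\pi$, and $\psi_0=\mathrm{id}$ since $\Psi_0=\mathrm{id}$. Composing the displayed identity with $\pi$ gives $f_\lambda=\psi_\lambda\circ f\circ\phi_\lambda^{-1}$, i.e.\ $F$ is $\ms{A}$-trivial. As $F$ was arbitrary, $f$ is $\ms{F}$-stable.

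\emph{$\ms{F}$-stability of $f$ implies Legendrian stability of $\tilde f$.} Let $\tilde f_\lambda$ be an integral deformation of $\tilde f$ and put $f_\lambda=\pi\circ\tilde f_\lambda$. By Theorem~\ref{frontal unfoldings and integral deformations}, $F=(f_\lambda,\lambda)$ is a frontal unfolding of $f$, so by $\ms{F}$-stability there are smooth families $\phi_\lambda$ on the source and $\psi_\lambda$ on the target, unfolding the identity, with $f_\lambda=\psi_\lambda\circ f\circ\phi_\lambda^{-1}$. In particular each $f_\lambda$ is $\ms{A}$-equivalent to $f$, hence a proper frontal, and therefore has a unique integral lift. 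Lift $\psi_\lambda$ through Theorem~\ref{diffeomorphism fixes contactomorphism} to a smooth family of Legendrian diffeomorphisms $\Psi_\lambda$ with $\Psi_0=\mathrm{id}$. Then $\Psi_\lambda\circ\tilde f\circ\phi_\lambda^{-1}$ is integral (a contactomorphism pulls the contact form back to a nonzero multiple of itself, hence preserves integrality) and projects by $\pi$ to $f_\lambda$; by uniqueness of the integral lift it must coincide with $\tilde f_\lambda$. Thus $\tilde f_\lambda=\Psi_\lambda\circ\tilde f\circ\phi_\lambda^{-1}$ is a trivial Legendrian deformation, and since $\tilde f_\lambda$ was arbitrary, $\tilde f$ is Legendrian stable.

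The verifications that remain are routine: the basepoint normalisations $\Psi_0=\mathrm{id}$, $\psi_0=\mathrm{id}$ and the smooth dependence on $\lambda$ come from the explicit formula in Theorem~\ref{diffeomorphism fixes contactomorphism} and the fibration argument in the remark following it, and the preservation of integrality under a contactomorphism is standard. The only point that needs a little care is the appeal to properness in the converse direction to guarantee uniqueness of the integral lift of each $f_\lambda$; this is exactly the role of properness already flagged in the remark after Theorem~\ref{frontal unfoldings and integral deformations}, and I expect it to be the main (and only) delicate step.
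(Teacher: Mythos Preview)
Your proof is correct and follows essentially the same approach as the paper: both directions hinge on Theorem~\ref{frontal unfoldings and integral deformations} to pass between frontal unfoldings and integral deformations, together with the lifting/descending of diffeomorphisms to Legendrian diffeomorphisms. The only difference is cosmetic: the paper packages the passage between $\ms{A}$-equivalence and Legendrian equivalence into a single citation of Corollary~\ref{A-equivalence is compatible with frontals}, whereas you unpack that corollary explicitly via Theorem~\ref{diffeomorphism fixes contactomorphism} and uniqueness of the Nash lift from properness.
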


\begin{proof}
	Assume $f$ is stable as a frontal and let $\tilde f_u$ be an integral deformation of $\tilde f$: by Theorem \ref{frontal unfoldings and integral deformations}, $\tilde f_u$ defines a frontal unfolding $F=(f_u,u)$ of $f$.
	Stability of $f$ then implies that $f_u$ is $\ms{A}$-equivalent to $f$.
	By Corollary \ref{A-equivalence is compatible with frontals}, this then implies that $\tilde f_u$ is Legendrian equivalent to $\tilde f$.
	Since the choice of $\tilde f_u$ was arbitrary, we conclude $\tilde f$ is Legendrian stable.
	The opposite direction is shown similarly.
\end{proof}

\begin{corollary}\label{stable if and only if codimension 0}
	A frontal map germ $f\colon (\mb{K}^n,S) \to (\mb{K}^{n+1},0)$ is $\ms{F}$-stable if and only if its $\ms{F}_e$-codimension is $0$.
\end{corollary}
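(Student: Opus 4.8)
The plan is to reduce this to Ishikawa's corresponding statement for Legendrian stability via the machinery already assembled in this section. The key intermediary is Corollary \ref{trivial Legendrian deformation}, which says $f$ is $\ms{F}$-stable if and only if $\tilde f$ is Legendrian stable, together with the isomorphism \eqref{eq: frontal codimension} from Theorem \ref{legendrian and frontal codimension}, which identifies $\ms{F}(f)/T\ms{A}_ef$ with $\theta_I(\tilde f)/T\ms{L}_e\tilde f$. So the $\ms{F}_e$-codimension of $f$ equals the Legendrian codimension of $\tilde f$, and the statement becomes: $\tilde f$ is Legendrian stable if and only if $\theta_I(\tilde f)=T\ms{L}_e\tilde f$. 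This last equivalence is precisely Ishikawa's infinitesimal criterion for Legendrian stability in corank at most $1$ (the result quoted in the introduction, \cite{Ishikawa05}), which we are entitled to invoke since our standing hypothesis restricts to frontals whose Nash lift has corank $1$.

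First I would record the ``only if" direction directly at the frontal level, which does not need Ishikawa: if $f$ is $\ms{F}$-stable, then every frontal $1$-parameter unfolding $(t,f_t)$ is $\ms{A}$-trivial, so every $\xi = v_0(f_t) \in \ms{F}(f)$ lies in $T\ms{A}_ef$; hence $\ms{F}(f)=T\ms{A}_ef$ and the $\ms{F}_e$-codimension is $0$. The content is in the converse. Here I would suppose $\dim T^1_{\ms{F}_e}f = 0$, so $\ms{F}(f)=T\ms{A}_ef$; by the isomorphism $\Pi$ of Theorem \ref{legendrian and frontal codimension} this forces $\theta_I(\tilde f)=T\ms{L}_e\tilde f$, i.e. $\tilde f$ is infinitesimally Legendrian stable. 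By Ishikawa's theorem in corank $1$, infinitesimal Legendrian stability implies Legendrian stability, and then Corollary \ref{trivial Legendrian deformation} gives that $f$ is $\ms{F}$-stable.

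Alternatively — and this is cleaner if one wants to stay self-contained modulo Ishikawa — one can note that Corollary \ref{trivial Legendrian deformation} already reduces everything to ``$\tilde f$ Legendrian stable $\iff$ Legendrian codimension $0$", which is Ishikawa's statement verbatim once we transport the codimension back through $\Pi$. Either way the main (and only) obstacle is that the implication ``infinitesimally stable $\Rightarrow$ stable" for the Legendrian lift is genuinely Ishikawa's theorem and is not reproved here; it is exactly the point where the corank $1$ restriction is used, since $\theta_I(\tilde f)$ is only linear in that range. I would therefore state the proof as a two-line deduction: combine Theorem \ref{legendrian and frontal codimension}, Corollary \ref{trivial Legendrian deformation}, and Ishikawa's infinitesimal stability criterion, being careful to note that the ``only if" half is elementary and direct so that the logical dependence on \cite{Ishikawa05} is confined to the single implication that needs it.
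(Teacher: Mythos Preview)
Your proposal is correct and matches the paper's proof essentially line for line: the paper combines Corollary \ref{trivial Legendrian deformation}, Ishikawa's infinitesimal stability criterion in corank at most $1$, and Theorem \ref{legendrian and frontal codimension}, exactly as in your ``alternative'' two-line deduction. Your additional direct argument for the ``only if'' direction at the frontal level is fine but the paper does not bother to separate it out, since Ishikawa's criterion is an equivalence and handles both directions at once.
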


\begin{proof}
	Corollary \ref{trivial Legendrian deformation} states that $f$ is $\ms{F}$-stable if and only if its Nash lift $\tilde f$ is Legendrian stable.
	Since $f$ has corank at most $1$, so does $\tilde f$, and a result by Ishikawa \cite{Ishikawa05} states that $\tilde f$ is Legendrian stable for the bundle projection $\pi$ if and only if $\theta_I(\tilde f)=T\ms{L}_e\tilde f$.
	However, it follows from Theorem \ref{legendrian and frontal codimension} that this is equivalent to $\ms{F}(f)=T\ms{A}_ef$.
\end{proof}

\begin{example}\label{cusp_umbrella}
	The following frontal hypersurfaces are stable as frontals:
	\begin{enumerate}
		\item Cusp: $X^2-Y^3=0$
		\item Folded Whitney umbrella: $Z^2-X^2Y^3=0$
	\end{enumerate}
\end{example}

Let $f\colon (\mb{K}^n,S) \to (\mb{K}^{n+1},0)$ be a frontal map germ with $d$-parameter unfolding $F=(f_u,u)$, not necessarily frontal.
Recall that the \textbf{pullback} of $F$ by $h\colon (\mb{K}^l,0) \to (\mb{K}^d,0)$ is defined as the $l$-paramter unfolding
	\[(h^*F)(x,v)=(f_{h(v)}(x),v)\]

\begin{definition}
	Let $f\colon (\mb{K}^n,S) \to (\mb{K}^{n+1},0)$ be a frontal map germ.
	A frontal unfolding $F$ of $f$ is \textbf{$\ms{F}$-versal} or \textbf{versal as a frontal} if, given any other frontal unfolding $G$ of $f$, there exist unfoldings $T\colon (\mb{K}^{n+1}\times\mb{K}^d,0) \to (\mb{K}^{n+1}\times\mb{K}^d,0)$ and $\Sigma\colon(\mb{K}^n\times\mb{K}^d,S\times\{0\}) \to (\mb{K}^n\times\mb{K}^d,S\times\{0\})$ of the identity such that
			\[G=T\circ h^*F \circ \Sigma\]
	for some map germ $h$.
\end{definition}

\begin{lemma}\label{frontal and integral versality}
	Given a frontal map germ $f\colon (\mb{K}^n,S) \to (\mb{K}^{n+1},0)$, a frontal unfolding $F=(f_u,u)$ is $\ms{F}$-versal if and only if $\tilde f_u$ is a Legendre versal deformation of $\tilde f$.
\end{lemma}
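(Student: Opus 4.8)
The plan is to deduce this from the corresponding statement for Legendre deformations proved by Ishikawa, transported via the dictionary built up in the previous results — principally Theorem \ref{frontal unfoldings and integral deformations} (frontal unfoldings $\leftrightarrow$ integral deformations of the Nash lift) and Corollary \ref{A-equivalence is compatible with frontals} (frontal $\ms{A}$-equivalence $\leftrightarrow$ Legendrian equivalence of Nash lifts). The key point to keep straight is that the definition of $\ms{F}$-versality quantifies only over \emph{frontal} unfoldings $G$ of $f$, while Legendre versality quantifies over \emph{integral} deformations of $\tilde f$; the two classes of test objects correspond bijectively under $G \mapsto \tilde{g_u}$ by Theorem \ref{frontal unfoldings and integral deformations}, so the quantifiers match up.

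First I would prove the ``only if'' direction. Suppose $F=(f_u,u)$ is $\ms{F}$-versal, and let $\tilde f_v$ be any integral deformation of $\tilde f$. By Theorem \ref{frontal unfoldings and integral deformations}, the projected unfolding $G=(\pi\circ\tilde f_v,v)$ is a frontal unfolding of $f$, so $\ms{F}$-versality gives unfoldings $T$, $\Sigma$ of the identity and a map germ $h$ with $G=T\circ h^*F\circ\Sigma$. Now I would lift this factorisation to the contact level: the base-change $h$ pulls $\tilde f_u$ back to $\widetilde{f_{h(v)}}$ (again an integral deformation by Theorem \ref{frontal unfoldings and integral deformations} applied to $h^*F$), and the source unfolding $\Sigma$ lifts trivially since it acts only on the source $\mb{K}^n\times\mb{K}^d$. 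The target unfolding $T$ of the identity on $\mb{K}^{n+1}\times\mb{K}^d$ lifts, parameter by parameter, to an unfolding of the identity Legendrian diffeomorphism on $PT^*\mb{K}^{n+1}\times\mb{K}^d$ by Theorem \ref{diffeomorphism fixes contactomorphism} (and the remark following it, for the parametrised version). Composing these lifts exhibits $\tilde f_v$ as induced from $\tilde f_u$ by base change followed by a Legendrian unfolding of the identity, i.e. $\tilde f_u$ is Legendre versal. The ``if'' direction runs the same argument in reverse: given Legendre versality of $\tilde f_u$ and an arbitrary frontal test unfolding $G$ of $f$, pass to $\tilde{g_v}$, apply Legendre versality to get a base change and a Legendrian unfolding of the identity, then project down via $\pi$, using that the projection of a Legendrian diffeomorphism unfolding the identity is a diffeomorphism unfolding the identity (Legendrian diffeomorphisms cover base diffeomorphisms by definition) and that projection commutes with base change.

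The main obstacle I anticipate is bookkeeping rather than mathematics: making sure the three pieces $(T, h, \Sigma)$ and their lifts $(\tilde T, h, \tilde\Sigma)$ genuinely compose to give a factorisation in the required normal form (unfoldings \emph{of the identity} in source and target, plus a base-change map $h$), and checking that ``integral deformation of $\tilde f$'' is preserved under each operation — which is exactly where properness of $f$ is needed, so that the Nash lift of the projected unfolding really is a deformation of $\tilde f$ and not merely some integral map. Once the correspondence of test objects and of the factorisation data is set up carefully, both implications are formal. I would also remark that this lemma, combined with Theorem \ref{legendrian and frontal codimension}, sets up the infinitesimal criterion for $\ms{F}$-versality (a frontal analogue of Mather's versality theorem) announced in the introduction.
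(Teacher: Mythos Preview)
Your proposal is correct and follows essentially the same route as the paper: translate test objects via Theorem \ref{frontal unfoldings and integral deformations}, lift the target unfolding $T$ to a family of Legendrian diffeomorphisms via Theorem \ref{diffeomorphism fixes contactomorphism} for the forward direction, and project the Legendrian data down via $\pi$ for the converse. The paper additionally pauses to note that $h^*F$ is itself a proper frontal (since $\Sigma(F)=\Sigma(f)\times\{0\}$ is nowhere dense), which is what justifies applying Theorem \ref{frontal unfoldings and integral deformations} to it and obtaining $\widetilde{f_{h(u)}}$ as an integral deformation of $\tilde f$; you gesture at this point under ``properness'', and it is worth making explicit.
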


\begin{proof}
	Assume $F$ is a versal frontal unfolding of $f$ and let $(\widetilde{g_u})$ be an $s$-parameter integral deformation of $\tilde f$.
	Theorem \ref{frontal unfoldings and integral deformations} implies that the $s$-parameter unfolding $G=(u,g_u)$ is frontal.
	By versality of $F$, there exist unfoldings $\mc{T}\colon (\mb{K}^{n+1}\times\mb{K}^d,0) \to (\mb{K}^{n+1}\times\mb{K}^d,0)$, $\mc{S}\colon (\mb{K}^n\times\mb{K}^d,S\times\{0\}) \to (\mb{K}^n\times\mb{K}^d,S\times\{0\})$ of the identity map germ and a smooth map germ $h\colon (\mb{K}^s,0) \to (\mb{K}^d,0)$ such that $G=\mc{T}\circ h^*F \circ \mc{S}^{-1}$.

	Let $f\colon N \to Z$ be a representative of $f$ which is a proper frontal map, and $F\colon \mc{N} \to \mc{Z}$ be a representative of $F$ such that $\mc{N} \subseteq N\times \mb{K}^d$.
	A simple computation shows that $\Sigma(F)=\Sigma(f)\times\{0\}$; therefore, since $\Sigma(f)$ is nowhere dense in $N$, $\Sigma(F)$ is nowhere dense in $\mc{N}$ and $F$ is a proper frontal map.
	Theorem \ref{frontal unfoldings and integral deformations} then states that $f_u$ lifts into integral deformation of $\tilde f$.
	
	Now consider representatives $h^*F=(u,f_{h(u)})\colon \mc{N}_1 \to \mc{Z}_1$, $\mc{S}=(u,\sigma_u)\colon \mc{N}_1 \to \mc{N}_2$, $\mc{T}=(u,\tau_u)\colon \mc{Z}_1 \to \mc{Z}_2$ and $G\colon \mc{N}_2 \to \mc{Z}_2$ such that $G=\mc{T}\circ h^*F\circ \mc{S}^{-1}$ as mappings.
	Since $(\tau_u)$ is a smooth $d$-parameter family of diffeomorphisms, we can lift it onto a $d$-parameter family of smooth Legendrian diffeomorphisms $T_u\colon PT^*\mc{Z}_1 \to PT^*\mc{Z}_2$.
	Therefore,
		\[\widetilde{g_u}=T_u\circ \widetilde{f_{h(u)}}\circ \sigma_u^{-1}\]
	and $\widetilde{f_u}$ is a versal Legendrian deformation of $\tilde f$.

	Conversely, let $\tilde f_u$ be a versal integral deformation of $\tilde f$ and $G=(g_u,u)$ be a frontal $s$-parameter unfolding of $f$.
	Theorem \ref{frontal unfoldings and integral deformations} implies that the $s$-parameter deformation $\widetilde{g_u}$ is integral.
	By versality of $\tilde f_u$, there exist smooth families of diffeomorphisms $T_u\colon (PT^*\mb{K}^{n+1},w) \to (\mb{K}^{n+1},w)$ and $\sigma_u\colon (\mb{K}^n,S) \to (\mb{K}^n,S)$ and a smooth map germ $h\colon (\mb{K}^s,0) \to (\mb{K}^d,0)$ verifying the following:
	\begin{enumerate}
		\item $T_u$ is a Legendrian diffeomorphism for all $u$; \label{Tu Legendrian}
		\item $T_0$ and $\sigma_0$ are the identity map germs;
		\item $\widetilde{g_u}=T_u\circ \tilde f_{h(u)}\circ \sigma_u$.
	\end{enumerate}
	By Item \ref{Tu Legendrian}, we can find a smooth family of diffeomorphisms $\tau_u\colon (\mb{K}^{n+1},0)\to (\mb{K}^{n+1},0)$ such that $\pi\circ T_u=\tau_u\circ \pi$ and $\tau_0$ is the identity map germ.
	It follows that
		\[\widetilde{g_u}=T_u\circ \tilde f_{h(u)}\circ \sigma_u \iff g_u=\tau_u\circ f_{h(u)}\circ \sigma_u.\]
	If we now consider the unfoldings $\mc{T}=(\tau_u,u)$ and $\mc{S}=(\sigma_u,u)$, we have $G=\mc{T}\circ h^*F\circ \mc{S}$.
	We conclude that $F$ is versal as a frontal.
\end{proof}

\begin{theorem}[Frontal versality theorem]\label{frontal versality}
	Given a frontal map germ $f\colon (\mb{K}^n,S) \to (\mb{K}^{n+1},0)$,
	\begin{enumerate}
	\item $f$ admits a frontal versal unfolding if and only if it is $\ms{F}$-finite;
	\item a frontal unfolding $F(u,x)=(u,f_u(x))$ of $f$ is versal as a frontal if and only if
		\begin{align*}
			\ms{F}(f)=T\ms{A}_e f+\Sp_{\mb{K}}\{\dot F_1,\dots,\dot F_d\}, && \dot F_j=\left.\frac{\p f_u}{\p u_j}\right|_{u=0}.
		\end{align*}
	\end{enumerate}
\end{theorem}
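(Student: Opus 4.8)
The plan is to deduce the frontal versality theorem from Ishikawa's Legendrian versality theorem for integral map germs of corank at most $1$, using the dictionary established in Theorem \ref{legendrian and frontal codimension} and Lemma \ref{frontal and integral versality}. Ishikawa proves in \cite{Ishikawa05} that an integral map germ $\tilde f$ of corank $\leq 1$ admits a Legendre versal deformation if and only if the Legendrian codimension $\dim \theta_I(\tilde f)/T\ms{L}_e\tilde f$ is finite, and that an integral deformation $\tilde f_u$ is Legendre versal if and only if $\theta_I(\tilde f)=T\ms{L}_e\tilde f+\Sp_{\mb{K}}\{v_0(\tilde f_{u_1}),\dots,v_0(\tilde f_{u_d})\}$, where the $v_0(\tilde f_{u_j})$ are the initial velocities of the coordinate deformations. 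Our task is essentially to transport both statements through the isomorphism $\Pi$ of \eqref{eq: frontal codimension}.

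For item (1): by Lemma \ref{frontal and integral versality}, $f$ admits a frontal versal unfolding if and only if $\tilde f$ admits a Legendre versal deformation, which by Ishikawa holds if and only if $\dim\theta_I(\tilde f)/T\ms{L}_e\tilde f<\infty$. By Theorem \ref{legendrian and frontal codimension}, the isomorphism $\Pi$ identifies this quotient with $\ms{F}(f)/T\ms{A}_ef=T^1_{\ms{F}_e}f$, so finiteness of the Legendrian codimension is equivalent to $\ms{F}$-finiteness of $f$. This gives item (1).

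For item (2): again by Lemma \ref{frontal and integral versality}, $F=(u,f_u)$ is $\ms{F}$-versal if and only if $\tilde f_u$ is Legendre versal, which by Ishikawa's criterion holds if and only if $\theta_I(\tilde f)=T\ms{L}_e\tilde f+\Sp_{\mb{K}}\{v_0(\widetilde{(f_u)_j}):j=1,\dots,d\}$, where $(f_u)_j$ denotes the one-parameter deformation obtained by restricting to the $j$-th coordinate. The key point is to check that $t\pi$ sends the spanning set $\{v_0(\widetilde{(f_u)_j})\}$ to $\{\dot F_j\}$: this is exactly the chain-rule computation already used in the proof of Theorem \ref{legendrian and frontal codimension}, namely $v_0(\pi\circ \tilde f_t)=t\pi(v_0(\tilde f_t))$, applied to each coordinate deformation, together with the fact that $v_0$ of the $j$-th coordinate deformation of $F$ is by definition $\dot F_j=\p f_u/\p u_j|_{u=0}$. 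Since $t\pi$ is a $\mb{K}$-linear isomorphism carrying $\theta_I(\tilde f)$ onto $\ms{F}(f)$ and $T\ms{L}_e\tilde f$ onto $T\ms{A}_ef$ (Theorem \ref{legendrian and frontal codimension}), applying $t\pi$ to the Legendrian versality equation yields $\ms{F}(f)=T\ms{A}_ef+\Sp_{\mb{K}}\{\dot F_1,\dots,\dot F_d\}$, and conversely applying $(t\pi)^{-1}$ recovers the Legendrian equation; hence the two conditions are equivalent.

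The only genuine subtlety — and the step I would treat most carefully — is matching the spanning sets correctly: one must be sure that the initial velocity of the lifted coordinate deformation $\widetilde{(f_u)_j}$ really is the Nash lift construction applied coordinatewise and that $t\pi$ of it is $\dot F_j$ and not some modification (for instance, there is no ambiguity coming from the choice of representative $\nu$ of the lift, precisely because $f$ is proper, so $\tilde f$ and each $\widetilde{f_u}$ are uniquely determined). Once this identification is in place, everything else is a formal transport of Ishikawa's theorem through the commutative diagram of Theorem \ref{legendrian and frontal codimension}; no further calculation is needed.
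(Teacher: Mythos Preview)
Your proposal is correct and follows essentially the same route as the paper's own proof: both arguments reduce to Ishikawa's Legendrian versality theorem via Lemma \ref{frontal and integral versality}, and then transport the conclusions through the isomorphism $t\pi$ of Theorem \ref{legendrian and frontal codimension}. Your explicit discussion of the subtlety in matching the spanning sets (that $t\pi$ carries the initial velocities of the lifted coordinate deformations to the $\dot F_j$) is exactly the computation \eqref{frontal versality equation} in the paper, so nothing is missing.
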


To show Theorem \ref{frontal versality}, we shall make use of

\begin{theorem}[Ishikawa's Legendre versality theorem \cite{Ishikawa05}]\label{integral versality}
	Given an integral $\tilde f\colon (\mb{K}^n,S) \to (PT^*\mb{K}^{n+1},w)$ of corank at most $1$,
		\begin{enumerate}
		\item $\tilde f$ admits a versal Legendrian unfolding if and only if its Legendrian codimension is finite;
		\item a Legendrian unfolding $\tilde f_u$ of $\tilde f$ is versal if and only if
		\begin{align}\label{legendrian versality equation}
			\theta_I(\tilde f)=T\ms{L}_e \tilde f+\Sp_{\mb{K}}\left\{\left.\frac{\p \tilde f_u}{\p u_1}\right|_{u=0},\dots,\left.\frac{\p \tilde f_u}{\p u_d}\right|_{u=0}\right\}.
		\end{align}
		\end{enumerate}
\end{theorem}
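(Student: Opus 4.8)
The plan is to run the classical Thom--Mather versality argument inside Ishikawa's category of integral map germs under Legendrian equivalence. The corank $\le 1$ hypothesis is exactly what makes this work: by the result of Ishikawa quoted in Remark \ref{corank 1 infinitesimal integral deformation}, $\theta_I(\tilde f)=\{\xi\in\theta(\tilde f):\xi^*\tilde\alpha=0\}$ is a genuine $\mb{K}$-linear space rather than a cone, and both $\theta_I(\tilde f)$ and $T\ms{L}_e\tilde f$ carry the $\ms{O}$-module structure needed for the preparation theorem. I would prove the infinitesimal criterion (2) first and then deduce the finiteness statement (1) from it.

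For the ``only if'' half of (2), assume $\tilde f_u$ is versal and let $\tilde g_t$ be an arbitrary one-parameter integral deformation of $\tilde f$. Versality produces a family $T_t$ of Legendrian diffeomorphisms, a family $\sigma_t$ of source diffeomorphisms and a germ $h$ with $\tilde g_t=T_t\circ\tilde f_{h(t)}\circ\sigma_t^{-1}$; differentiating at $t=0$ and applying the chain rule splits $v_0(\tilde g_t)$ into a term in $T\ms{L}_e\tilde f$ (the contribution of $T_t$ and $\sigma_t$) plus a $\mb{K}$-combination of the initial velocities $\partial\tilde f_u/\partial u_j|_{0}$ (the contribution of $h$). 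Letting $v_0(\tilde g_t)$ range over $\theta_I(\tilde f)$ yields Equation \eqref{legendrian versality equation}. Granting (2), part (1) is immediate: if the Legendrian codimension is finite, pick integral deformations whose initial velocities project to a $\mb{K}$-basis of $\theta_I(\tilde f)/T\ms{L}_e\tilde f$, assemble them into a single unfolding, and invoke the criterion just proved; conversely a versal $d$-parameter unfolding supplies, through (2), a spanning family of size $d$, so the codimension is at most $d$.

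The substance is the converse half of (2): Equation \eqref{legendrian versality equation} implies versality. Here I would use Mather's homotopy method. Given any integral unfolding $G=\tilde g_v$ of $\tilde f$, I form the fibre sum $F\oplus G$ of the two unfoldings over $\tilde f$ and reduce the problem to showing that $F\oplus G$ is Legendrian-trivial in the $G$-directions, i.e. that the extra parameters coming from $G$ add nothing modulo Legendrian equivalence. The infinitesimal versality of $F$ at $u=0$ must be promoted to a relative statement over the whole parameter space, and this is where the Malgrange--Weierstrass preparation theorem enters: one checks that the relevant module of infinitesimal integral deformations along $F\oplus G$ is finitely generated over the ring of the parameters and that Equation \eqref{legendrian versality equation} provides generators modulo the maximal ideal, so that preparation upgrades this to generation over the full local ring. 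That surjectivity lets one solve, smoothly in the parameters, the equation expressing the velocity of the deformation as a sum of a $T\ms{L}_e$-term and a reparametrisation of the base directions, and integrating the resulting time-dependent vector field produces the families $T_u$, $\sigma_u$ and the inducing map $h$.

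The main obstacle is the integration step under the integrality constraint. Unlike the ordinary $\ms{A}$-versality proof, every vector field appearing in the homotopy must lie in $\theta_I$ and every diffeomorphism must be Legendrian, so one has to verify that the flow produced by the preparation argument never leaves the integral class. This is precisely where corank $\le 1$ is indispensable: it keeps $\theta_I(\tilde f)$ linear and closed under the splitting used at each time-step, and it guarantees, via Theorem \ref{diffeomorphism fixes contactomorphism} together with the description of $\theta_I$ in Remark \ref{corank 1 infinitesimal integral deformation}, that the infinitesimal generators of $T\ms{L}_e\tilde f$ really do integrate to Legendrian diffeomorphisms. Verifying the finite-generation hypothesis required to apply the preparation theorem is the other delicate point, and it rests on the finiteness of the Legendrian codimension carried in the statement.
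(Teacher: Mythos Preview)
The paper does not prove this theorem at all: it is quoted as Ishikawa's result from \cite{Ishikawa05} and used as a black box in the proof of Theorem \ref{frontal versality}. So there is no ``paper's own proof'' to compare your proposal against.

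That said, your sketch is a sensible outline of the classical Thom--Mather versality argument transplanted into the Legendrian category, and is presumably in the spirit of Ishikawa's original proof. If you intend to supply a self-contained proof rather than cite \cite{Ishikawa05}, the points you flag as delicate are the right ones: the module structure on $\theta_I(\tilde f)$ needed to apply preparation, and the compatibility of the integration step with the integrality constraint. You should also be careful about one small slip: in your derivation of (1) from (2) you write ``invoke the criterion just proved'', but at that stage you have only argued the \emph{only if} direction of (2); constructing a versal unfolding from a spanning set of initial velocities requires the \emph{if} direction, which you prove afterwards. Reordering the argument, or simply pointing forward, fixes this.
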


\begin{proof}[Proof of Theorem \ref{frontal versality}]
	By Lemma \ref{frontal and integral versality}, a frontal unfolding $F=(f_u,u)$ of $f$ is versal as a frontal if and only if the smooth family $\widetilde{f_u}$ is a versal Legendre deformation of $\tilde f$.
	In particular, it follows from Theorem \ref{frontal unfoldings and integral deformations} that $\tilde f$ admits a versal Legendrian deformation if and only if $f$ admits a versal frontal unfolding.
	This fact shall be used to prove both items.
	
	By Theorem \ref{integral versality}, $f$ admits a $\ms{F}$-versal unfolding if and only if $\tilde f$ has finite Legendre codimension.
	However, it was proved in Theorem \ref{legendrian and frontal codimension} that this is equivalent to $f$ being $\ms{F}$-finite.
	This shows the first Item.

	We move onto the second Item.
	If $F$ is $\ms{F}$-versal, $\tilde f_u$ is a Legendre versal unfolding of $\tilde f$ by Lemma \ref{frontal and integral versality} and Equation \eqref{legendrian versality equation} holds.
	Computing the image via $t\pi$ on both sides of Equation \eqref{legendrian versality equation} and using Theorem \ref{legendrian and frontal codimension}, we get
	\begin{multline}\label{frontal versality equation}
		\ms{F}(f)=T\ms{A}_e\tilde f+t\pi\left[\Sp_{\mb{K}}\left\{\left.\frac{\p \tilde f_u}{\p u_1}\right|_{u=0},\dots,\left.\frac{\p \tilde f_u}{\p u_d}\right|_{u=0}\right\}\right]=\\
				=T\ms{A}_ef+\Sp_{\mb{K}}\{\dot F_1,\dots,\dot F_d\}.
	\end{multline}
	Conversely, let us assume that \eqref{frontal versality equation} holds: using Theorem \ref{legendrian and frontal codimension}, we see that \eqref{legendrian versality equation} holds as well.
	Therefore, $F$ is versal as a frontal.
	This shows the second Item.
\end{proof}

\section{A geometric criterion for $\ms{F}$-finiteness}\label{mathergaffney}
The Mather-Gaffney criterion states that a smooth $f\colon (\mb{C}^n,S) \to (\mb{C}^{n+1},0)$ is $\ms{A}$-finite if and only if there is a finite representative $f\colon N \to Z$ with isolated instability.
For example, the generic singularities for $n=2$ are transversal double points, with Whitney umbrellas and triple points in the accumulation (see e.g. \cite{MondNuno_Singularities} \S 4.7).
This implies that generic frontal singularities such as the folded Whitney umbrella (see Example \ref{cusp_umbrella}) are not $\ms{A}$-finite, since it contains cuspidal edges near the origin.
Nonetheless, cuspidal edges are generic within the subspace of frontal map germs $(\mb{C}^2,S) \to (\mb{C}^3,0)$ (\cite{Arnold_CWF}), which suggests the existence of a Mather-Gaffney-type criterion for frontal hypersurfaces.

\begin{proposition}\label{F finite plane curve}
	A germ of analytic plane curve $\gamma\colon (\mb{C},S) \to (\mb{C}^2,0)$ is $\ms{F}$-finite if and only if it is $\ms{A}$-finite.
\end{proposition}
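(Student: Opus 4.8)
The plan is to prove the two implications separately, taking advantage of the fact that for plane curves the dimension $n=1$ is small enough that everything can be made explicit. For the implication ``$\ms{A}$-finite $\implies$ $\ms{F}$-finite'', I would argue that for $n=1$ every plane curve germ is automatically frontal (its Jacobian ideal in $\ms{O}_1$ is principal since $\ms{O}_1$ is a PID, and $\Sigma(\gamma)$ is nowhere dense once $\gamma$ is finite), so Corollary~\ref{pq criterion frontality} applies, and moreover the Nash lift $\tilde\gamma$ has corank $1$ trivially. By Theorem~\ref{legendrian and frontal codimension}, $T^1_{\ms{F}_e}\gamma \cong \theta_I(\tilde\gamma)/T\ms{L}_e\tilde\gamma$, so $\ms{F}$-finiteness of $\gamma$ is equivalent to finiteness of the Legendrian codimension of $\tilde\gamma$. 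Now the key point: there is a surjection (or at least an $\ms{O}_1$-linear map with controllable cokernel) from $T\ms{A}_e\gamma$ — or better, from the full $\theta(\gamma)$ — onto the relevant pieces, and $\ms{F}(\gamma) \subseteq \theta(\gamma) = \theta_1(\gamma)$ is a submodule of a finitely generated module over the Noetherian ring $\ms{O}_1$. Since $\gamma$ is $\ms{A}$-finite, $T^1_{\ms{A}_e}\gamma = \theta(\gamma)/T\ms{A}_e\gamma$ is finite-dimensional; as $T\ms{A}_e\gamma \subseteq \ms{F}(\gamma)$ by the Lemma just before Theorem~\ref{legendrian and frontal codimension}, we get $\dim T^1_{\ms{F}_e}\gamma = \dim \ms{F}(\gamma)/T\ms{A}_e\gamma \le \dim \theta(\gamma)/T\ms{A}_e\gamma = \ms{A}_e\text{-}\codim(\gamma) < \infty$. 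This direction is in fact immediate from the inclusion $T\ms{A}_e\gamma \subseteq \ms{F}(\gamma) \subseteq \theta(\gamma)$.

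For the converse, ``$\ms{F}$-finite $\implies$ $\ms{A}$-finite'', the inclusion argument runs the wrong way, so I need genuine input. The strategy is to show that for plane curves $\ms{F}(\gamma)$ is actually large — specifically, that $\ms{F}(\gamma) + (\text{something of finite codimension}) = \theta(\gamma)$, or more precisely that $\theta(\gamma)/\ms{F}(\gamma)$ is finite-dimensional. Writing $\gamma$ in prenormal form, Remark~\ref{corank 1 infinitesimal integral deformation} characterises $\xi \in \ms{F}(\gamma)$ by a single first-order differential condition of the shape $\xi_2' + \mu\,\xi_1' \in \ms{O}_1\{p'\}$ (using $n=1$, so there are no $P_j$ and $y=t$ is the only source variable). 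The upshot is that $\ms{F}(\gamma)$ is the preimage under an $\ms{O}_1$-linear (first-order differential) operator of the ideal $\ms{O}_1\{p'\}$, and since $\ms{O}_1$ is a PID this ideal has finite colength (it is $\mf{m}^k$ for $k = \ord(p')$) provided $\gamma$ is finite. Hence $\theta(\gamma)/\ms{F}(\gamma)$ is finite-dimensional, and combining with $\dim \ms{F}(\gamma)/T\ms{A}_e\gamma = \dim T^1_{\ms{F}_e}\gamma < \infty$ gives $\ms{A}_e\text{-}\codim(\gamma) = \dim\theta(\gamma)/T\ms{A}_e\gamma < \infty$, i.e. $\gamma$ is $\ms{A}$-finite.

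The main obstacle is making the second direction rigorous: one must verify that the condition in Remark~\ref{corank 1 infinitesimal integral deformation} genuinely cuts out a submodule of finite colength in $\theta_1(\gamma) \cong \ms{O}_1^2$, which requires (a) checking that the operator $\xi \mapsto \xi_2' + \mu\xi_1'$ together with the freedom in choosing $\xi_1$ has image hitting all of $\ms{O}_1$ modulo $\ms{O}_1\{p'\}$, and (b) controlling the kernel, i.e. the integration constants. Care is needed because differentiation $\ms{O}_1 \to \ms{O}_1$ has a one-dimensional cokernel-type obstruction (constants), so one tracks these finite-dimensional discrepancies. A clean alternative for (the harder half of) the converse is to avoid prenormal-form computation entirely and instead invoke the frontal Mather--Gaffney criterion announced in Section~\ref{mathergaffney}: $\ms{F}$-finiteness of $\gamma$ gives a representative with isolated frontal instability, and for $n=1$ the only frontal-stable local models of plane curves (immersion, ordinary node, ordinary cusp) are also $\ms{A}$-stable, so isolated frontal instability forces isolated $\ms{A}$-instability, whence $\gamma$ is $\ms{A}$-finite by the classical Mather--Gaffney theorem. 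I would present the direct module-theoretic argument as the primary proof and remark on the geometric one.
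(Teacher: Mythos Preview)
Your forward direction (``$\ms{A}$-finite $\Rightarrow$ $\ms{F}$-finite'') is exactly the paper's argument: the chain $T\ms{A}_e\gamma \subseteq \ms{F}(\gamma) \subseteq \theta(\gamma)$ immediately gives $\codim_{\ms{F}_e}\gamma \le \codim_{\ms{A}_e}\gamma$.

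For the converse, your primary module-theoretic approach is correct and genuinely different from what the paper does. Your argument amounts to: the map $D\colon \ms{O}_1^2 \to \ms{O}_1$, $(\xi_1,\xi_2)\mapsto \xi_2'-\mu\xi_1'$, is surjective (take $\xi_1=0$ and integrate), and $\ms{F}(\gamma)=D^{-1}(p'\ms{O}_1)$, so $\theta(\gamma)/\ms{F}(\gamma)$ injects into $\ms{O}_1/(p')$, which has dimension $\ord(p')=\mult(\gamma)-1$. This not only proves the converse but in fact yields Corollary~\ref{formula for frontal codimension} directly, bypassing the frontal-reduction machinery of \S\ref{frontal reductions}. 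The only point to make explicit is that $p'\neq 0$: this follows from the standing hypothesis that $\gamma$ is a \emph{proper} frontal (so $\Sigma(\gamma)=\{p'=0\}$ is nowhere dense in a one-dimensional source, hence discrete), not from $\ms{F}$-finiteness or from finiteness of $\gamma$ as a map germ.

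The paper takes a shorter, purely geometric route for the converse: since $\gamma$ is a proper frontal analytic curve, $\Sigma(\gamma)$ is isolated; shrinking, $\gamma|_{N\setminus S}$ is an embedding, so $\gamma$ has isolated $\ms{A}$-instability and the \emph{classical} Mather--Gaffney criterion gives $\ms{A}$-finiteness. Note that this argument never actually uses the $\ms{F}$-finiteness hypothesis beyond the proper-frontal assumption.

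One genuine issue: your alternative geometric argument invokes the \emph{frontal} Mather--Gaffney criterion (Theorem~\ref{frontal mather gaffney}) to deduce isolated frontal instability from $\ms{F}$-finiteness. This is circular, since the proof of Theorem~\ref{frontal mather gaffney} explicitly handles the case $n=1$ by appealing to Proposition~\ref{F finite plane curve}. If you want a geometric proof, you must use the classical Mather--Gaffney criterion directly, as the paper does.
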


\begin{proof}
	If $\gamma$ is $\ms{A}$-finite, it is clear that it is also $\ms{F}$-finite, since
		\[\ms{F}(\gamma) \subseteq \theta(\gamma) \implies \dim\frac{\ms{F}(\gamma)}{T\ms{A}_e\gamma} \leq \dim\frac{\theta(\gamma)}{T\ms{A}_e\gamma} < \infty\]
	Assume $\gamma$ is $\ms{F}$-finite, and let $\gamma\colon N \to Z$ be a representative of $\gamma$.
	By the Curve Selection Lemma \cite{BurVerona72}, $\Sigma(\gamma)$ is an isolated subset in $N$, so we can assume (by shrinking $N$ if necessary) that $\gamma(N\backslash S)$ is a smooth submanifold of $Z$ and $\gamma^{-1}(\{0\})=S$.
	By the Mather-Gaffney criterion, it then follows that $\gamma$ is $\ms{A}$-finite, as stated.
\end{proof}

Given a frontal map $f\colon N \to Z$ and $z \in Z$, let $f_z\colon (N,f^{-1}(z))\to (Z,z)$.
We define $\ms{F}(f)$ as the sheaf of $\ms{O}_Z$-modules given by the stalk $\ms{F}(f)_z=\ms{F}(f_z)$.
We also set $\theta_N$ (resp. $\theta_Z$) as the sheaf of vector fields on $N$ (resp. $Z$) and the quotient sheaves
    \begin{align*}
        \ms{T}^1_{\ms{R}_e}f=\frac{\ms{F}(f)}{tf(\theta_N)}; && \ms{T}^1_{\ms{F}_e}f=\frac{f_*\left(\ms{T}^1_{\ms{R}_e}f\right)}{\omega f(\theta_Z)};
    \end{align*}

\begin{remark} \label{R tilde f finito}
	If $f$ is finite, we can take coordinates in $N$ and $W$ such that $\tilde f(x,y)=(x,f_n(x,y),\dots,f_{2n+1}(x,y))$.
	By \cite{Ishikawa94}, we have the identity
		\[R_{\tilde f}:=\left\{\lambda \in \ms{O}_N\colon d\lambda \in \ms{O}_N\,d\left({\tilde f}^*\ms{O}_W\right)\right\}=\left(\frac{\p}{\p y}\right)^{-1}\ms{O}_N\left\{\frac{\p \tilde f_n}{\p y},\dots,\frac{\p \tilde f_{2n+1}}{\p y}\right\}\]
	which is a $\ms{O}_N$-finite algebra by \cite{Ishikawa83}.
	Since $f$ is finite, $R_{\tilde f}$ is $\ms{O}_Z$-finite.
\end{remark}

\begin{proposition}[\cite{Ishikawa05}]\label{theta integral isomorfismo}
	Let $f\colon (\mb{C}^n,S) \to (\mb{C}^{n+1},0)$ be a frontal map germ.
	If $\tilde f$ is $\ms{A}$-equivalent to an analytic $g\colon (\mb{C}^n,S) \to (\mb{C}^{2n+1},0)$ (not necessarily integral) such that $\codim_{\mb{C}} \Sigma(g) > 1$,
		\[\frac{\theta_I(\tilde f)}{T\ms{L}_e\tilde f}\cong_{\ms{O}_Z}\frac{R_{\tilde f}}{\ms{O}_Z\{1,\tilde p_1,\dots,\tilde p_n\}}\]
	where $\tilde p_1,\dots, \tilde p_n$ are the coordinates of $\tilde f$ in the fibres of $\pi$.
\end{proposition}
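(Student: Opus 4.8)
The plan is to produce the isomorphism concretely, by identifying an infinitesimal integral deformation of $\tilde f$ with its ``generating function''. Fix Darboux coordinates $(X_1,\dots,X_n,Y,P_1,\dots,P_n)$ on $PT^*\mb{C}^{n+1}$, so that $\pi$ forgets the $P_i$ and the contact form is $\tilde\alpha=dY-\sum_iP_i\,dX_i$; write $u_i=X_i\circ\tilde f$, $v=Y\circ\tilde f$ and $\tilde p_i=P_i\circ\tilde f$, so integrality of $\tilde f$ reads $dv=\sum_i\tilde p_i\,du_i$. Since $\tilde f$ has corank at most $1$, Remark \ref{corank 1 infinitesimal integral deformation} gives $\theta_I(\tilde f)=\{\xi\in\theta(\tilde f):\xi^*\tilde\alpha=0\}$, and differentiating $\tilde f_t^*\tilde\alpha=0$ at $t=0$ along a deformation with $v_0(\tilde f_t)=\xi=(\xi_{X_1},\dots,\xi_{X_n},\xi_Y,\xi_{P_1},\dots,\xi_{P_n})$ turns this into the linear identity
\[
	d\xi_Y-\sum_i\tilde p_i\,d\xi_{X_i}-\sum_i\xi_{P_i}\,du_i=0 .
\]
Introduce the $\ms{O}_N$-linear map $\Phi\colon\theta(\tilde f)\to\ms{O}_N$, $\Phi(\xi)=\tilde\alpha(\xi)=\xi_Y-\sum_i\tilde p_i\,\xi_{X_i}$; the claim will be that $\Phi$ induces the stated isomorphism.

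First I would show $\Phi(\theta_I(\tilde f))=R_{\tilde f}$. For ``$\subseteq$'': differentiating $\Phi(\xi)$ and substituting the linearized identity gives $d\Phi(\xi)=\sum_i(\xi_{P_i}\,du_i-\xi_{X_i}\,d\tilde p_i)\in\ms{O}_N\,d(\tilde f^*\ms{O}_W)$, hence $\Phi(\xi)\in R_{\tilde f}$ by definition of $R_{\tilde f}$. For ``$\supseteq$'': given $\lambda\in R_{\tilde f}$, write $d\lambda$ as an $\ms{O}_N$-combination of the differentials of the components of $\tilde f$, i.e.\ of the $du_i$, $dv$, $d\tilde p_i$, and substitute $dv=\sum_i\tilde p_i\,du_i$ to obtain $d\lambda=\sum_i(\mu_i\,du_i+\nu_i\,d\tilde p_i)$ with $\mu_i,\nu_i\in\ms{O}_N$; then $\xi$ with $\xi_{P_i}=\mu_i$, $\xi_{X_i}=-\nu_i$, $\xi_Y=\lambda+\sum_i\tilde p_i\xi_{X_i}$ satisfies the linearized identity, so it belongs to $\theta_I(\tilde f)$ (here one uses once more that $\tilde f$ has corank at most $1$, so that $\theta_I(\tilde f)$ coincides with the solution space of the linearized identity), and $\Phi(\xi)=\lambda$.

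Next I would compute $\ker\Phi\cap\theta_I(\tilde f)$ and $\Phi(T\ms{L}_e\tilde f)$. Over $N\backslash\Sigma(\tilde f)$, where $\tilde f$ is an immersion, $\operatorname{im}d\tilde f$ is a Lagrangian subbundle of $(\ker\tilde\alpha,d\tilde\alpha)$, and evaluating the pairing $d\tilde\alpha(\xi,d\tilde f(\eta))$ shows that every $\xi\in\theta_I(\tilde f)$ with $\Phi(\xi)=0$ lies pointwise in $\operatorname{im}d\tilde f$ there; hence $\xi=d\tilde f(\eta)$ for a unique holomorphic $\eta$ on $N\backslash\Sigma(\tilde f)$. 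As $\codim\Sigma(\tilde f)=\codim\Sigma(g)>1$ and the source is smooth, $\eta$ extends holomorphically across $\Sigma(\tilde f)$, so $\xi\in t\tilde f(\theta_n)$; the reverse inclusion $t\tilde f(\theta_n)\subseteq\ker\Phi$ is immediate from $\tilde f^*\tilde\alpha=0$. For the second computation write $T\ms{L}_e\tilde f=t\tilde f(\theta_n)+\{\tilde\eta\circ\tilde f:\eta\in\theta_{n+1}\}$, where $\tilde\eta$ is the contact lift of a base field $\eta=\sum_k\eta_k\,\p_{z_k}$ provided by Theorem \ref{diffeomorphism fixes contactomorphism}; since the $X$- and $Y$-components of $\tilde\eta$ agree with those of $\eta$, one gets $\Phi(\tilde\eta\circ\tilde f)=\eta_{n+1}\circ f-\sum_i(\eta_i\circ f)\,\tilde p_i$, and letting $\eta$ vary this sweeps out exactly $\ms{O}_Z\{1,\tilde p_1,\dots,\tilde p_n\}$. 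Hence $\Phi$ descends to a map
\[
	\bar\Phi\colon\frac{\theta_I(\tilde f)}{T\ms{L}_e\tilde f}\longrightarrow\frac{R_{\tilde f}}{\ms{O}_Z\{1,\tilde p_1,\dots,\tilde p_n\}},
\]
which is onto by the first computation and injective since, if $\Phi(\xi)\in\ms{O}_Z\{1,\tilde p_1,\dots,\tilde p_n\}$, we may subtract a suitable element of $T\ms{L}_e\tilde f\subseteq\theta_I(\tilde f)$ to reduce to $\Phi(\xi)=0$, whence $\xi\in t\tilde f(\theta_n)\subseteq T\ms{L}_e\tilde f$ by the kernel computation. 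Concretely, $\Phi$ already identifies $\theta_I(\tilde f)/t\tilde f(\theta_n)$ with $R_{\tilde f}$, which is an $\ms{O}_Z$-module because $f^*\ms{O}_Z\subseteq\tilde f^*\ms{O}_W$ and $R_{\tilde f}$ is stable under multiplication by $\tilde f^*\ms{O}_W$; passing to the further quotient by $T\ms{L}_e\tilde f/t\tilde f(\theta_n)\cong\ms{O}_Z\{1,\tilde p_1,\dots,\tilde p_n\}$ yields the stated $\ms{O}_Z$-linear isomorphism.

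The step I expect to be the main obstacle is the extension of $\eta$ from $N\backslash\Sigma(\tilde f)$ to the full germ in the kernel computation: this is precisely where the hypothesis $\codim\Sigma(g)>1$ cannot be dropped, since otherwise $\eta$ may acquire poles along $\Sigma(\tilde f)$ and $\bar\Phi$ need not be injective. A secondary, more bookkeeping-heavy point is to make the matching of $\ms{O}_Z$-module structures on the two sides fully rigorous; for corank $1$ germs in prenormal form this becomes completely explicit on using the description $R_{\tilde f}=\left(\frac{\p}{\p y}\right)^{-1}\ms{O}_N\left\{\frac{\p\tilde f_n}{\p y},\dots,\frac{\p\tilde f_{2n+1}}{\p y}\right\}$ from Remark \ref{R tilde f finito} together with the form \eqref{prenormal nash lift} of the Nash lift.
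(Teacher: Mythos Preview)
The paper does not give its own proof of this proposition; it is stated with a citation to Ishikawa's 2005 paper and then used as input for Corollary~\ref{frontal sheaf coherent} and Theorem~\ref{frontal mather gaffney}. So there is nothing to compare against in the paper itself.

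Your argument is essentially Ishikawa's original one, and it is correct. The map $\Phi(\xi)=\tilde\alpha(\xi)$ is exactly the generating-function map; your computation $d\Phi(\xi)=\sum_i(\xi_{P_i}\,du_i-\xi_{X_i}\,d\tilde p_i)$ simultaneously shows $\Phi(\theta_I(\tilde f))\subseteq R_{\tilde f}$ and, when $\Phi(\xi)=0$, that $d\tilde\alpha(\xi,d\tilde f(\eta))=0$ for all $\eta$, which is precisely the Lagrangian pairing you invoke. The extension of $\eta$ across $\Sigma(\tilde f)$ is indeed the second Riemann/Hartogs extension theorem and is the only place the hypothesis $\codim\Sigma(g)>1$ enters; your identification of this as the key step is accurate. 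The computation of $\Phi(T\ms{L}_e\tilde f)$ via the contact lifts of Theorem~\ref{diffeomorphism fixes contactomorphism} is also the standard route.

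Two small points worth tightening. First, you use throughout that $\theta_I(\tilde f)$ coincides with the solution set of the linearized equation $d\xi_Y-\sum_i\tilde p_i\,d\xi_{X_i}-\sum_i\xi_{P_i}\,du_i=0$; this is Ishikawa's characterization valid for integral corank at most $1$ (Remark~\ref{corank 1 infinitesimal integral deformation}), which is a standing hypothesis of the paper but is not stated in the proposition itself, so it should be made explicit. Second, in the surjectivity step you write $d\lambda$ as an $\ms{O}_N$-combination of the $du_i$, $dv$, $d\tilde p_i$; note that such a representation need not be unique off $\Sigma(\tilde f)$ (there are $2n+1$ generators for an $n$-dimensional cotangent space), but any choice works since you only need \emph{some} preimage, and the substitution $dv=\sum_i\tilde p_i\,du_i$ is harmless.
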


\begin{remark}\label{singular set nash lift}
	Let $f$ and $\tilde f$ be given as in the statement above.
	If we assume that $f$ has corank $1$ and is given as in Equation \eqref{eq: prenormal corank 1}, $\Sigma(\tilde f)=V(p_y,\mu_y)$.
\end{remark}

\begin{corollary}\label{frontal sheaf coherent}
	Let $f\colon (\mb{C}^n,S) \to (\mb{C}^{n+1},0)$ be a frontal map germ.
	If $f$ is finite and $\codim V(p_y,\lambda_y) > 1$, there is a representative $f\colon N \to Z$ of $f$ such that $\ms{T}_{\ms{F}_e}^1f$ is a coherent sheaf.
\end{corollary}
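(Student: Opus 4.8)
The plan is to deduce coherence of $\ms{T}^1_{\ms{F}_e}f$ from coherence of the pieces that build it, exploiting the $\ms{O}_Z$-finiteness statements collected in Remark \ref{R tilde f finito} and the isomorphism of Proposition \ref{theta integral isomorfismo}. First I would fix a representative $f\colon N \to Z$ which is finite and proper frontal, with $\Sigma(f)$ nowhere dense (such a representative exists by the Mather--Gaffney-type arguments used earlier and by properness of $f$); shrinking $Z$ if necessary, I may assume $f$ is finite in the sense that $f_*\ms{O}_N$ is a coherent $\ms{O}_Z$-module. The key point is that, by Theorem \ref{legendrian and frontal codimension}, the map $t\pi$ gives a stalkwise $\mb{K}$-linear isomorphism $\ms{F}(f)_z \cong \theta_I(\tilde f_z)$ compatible with the $\ms{A}_e$- and $\ms{L}_e$-tangent spaces, so that $\ms{T}^1_{\ms{F}_e}f$ is, stalkwise, the sheaf $z \mapsto \theta_I(\tilde f_z)/T\ms{L}_e\tilde f_z$. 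Under the hypothesis $\codim V(p_y,\lambda_y) > 1$ (which, via Remark \ref{singular set nash lift}, is exactly the condition $\codim\Sigma(\tilde f) > 1$ needed to apply Proposition \ref{theta integral isomorfismo}), this stalk is isomorphic as an $\ms{O}_Z$-module to $R_{\tilde f}/\ms{O}_Z\{1,\tilde p_1,\dots,\tilde p_n\}$.

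Next I would promote this stalkwise description to a statement about sheaves. By Remark \ref{R tilde f finito}, $R_{\tilde f}$ is an $\ms{O}_N$-finite algebra, hence (since $f$ is finite) an $\ms{O}_Z$-coherent module when pushed forward; the submodule generated by $1,\tilde p_1,\dots,\tilde p_n$ is the image of a morphism $\ms{O}_Z^{n+1} \to f_*R_{\tilde f}$, hence coherent, so the quotient sheaf $\mc{Q} := f_*R_{\tilde f}/\ms{O}_Z\{1,\tilde p_1,\dots,\tilde p_n\}$ is coherent. It then remains to check that $\ms{T}^1_{\ms{F}_e}f$ and $\mc{Q}$ agree as sheaves, not merely stalk by stalk; for this I would verify that the stalkwise isomorphisms of Proposition \ref{theta integral isomorfismo} and Theorem \ref{legendrian and frontal codimension} are induced by a genuine morphism of $\ms{O}_Z$-modules (the construction of $t\pi$ is manifestly $\ms{O}_Z$-linear and local, and Ishikawa's isomorphism in \cite{Ishikawa05} is realized by an explicit $\ms{O}_Z$-linear map $R_{\tilde f} \to \theta_I(\tilde f)$, $\lambda \mapsto$ the integral field with $Y$-component $\lambda$). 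Once the isomorphism is known to be a sheaf morphism with invertible stalks, it is an isomorphism of sheaves, and coherence transfers.

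The main obstacle I anticipate is the passage from stalks to sheaves: Proposition \ref{theta integral isomorfismo} is stated for germs, and one must ensure that after choosing a single representative $f\colon N \to Z$ the relevant hypotheses ($f$ finite, $\codim\Sigma(\tilde f)>1$, $\tilde f$ of the stated normal form) hold uniformly over $Z$, and that the algebra $R_{\tilde f}$ and the submodule $\ms{O}_Z\{1,\tilde p_1,\dots,\tilde p_n\}$ are the sheafy versions whose stalks at each $z$ are the objects appearing in the germ-level statement. This is largely a matter of shrinking $Z$ and invoking the coherence of $f_*\ms{O}_N$ together with the openness of the conditions involved; the genuinely delicate part is confirming that $R_{\tilde f}$ sheafifies well, i.e. that the formula in Remark \ref{R tilde f finito} for $R_{\tilde f}$ commutes with localization at points of $Z$, which follows from the explicit $\left(\p/\p y\right)^{-1}$ description and flatness of $\ms{O}_{N,x}$ over $\ms{O}_{Z,z}$. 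Everything else — linearity of $t\pi$, coherence of quotients of coherent sheaves, and the transfer of coherence along an isomorphism — is routine.
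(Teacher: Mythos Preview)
Your proposal is correct and uses the same essential ingredients as the paper: the stalkwise identification of $\ms{T}^1_{\ms{F}_e}f$ with $R_{\tilde f}/\ms{O}_Z\{1,\tilde p_1,\dots,\tilde p_n\}$ via Proposition~\ref{theta integral isomorfismo} (together with Theorem~\ref{legendrian and frontal codimension}), and the $\ms{O}_Z$-finiteness of $R_{\tilde f}$ from Remark~\ref{R tilde f finito}.

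The execution differs slightly. You aim to exhibit $\ms{T}^1_{\ms{F}_e}f$ as a quotient of the coherent sheaf $f_*R_{\tilde f}$ by a finitely generated submodule, which forces you to worry about whether the stalkwise isomorphisms of Proposition~\ref{theta integral isomorfismo} glue to a genuine sheaf morphism. The paper bypasses this: it simply observes that each stalk is a finitely generated $\ms{O}_{Z,z}$-module (hence the sheaf is of finite type), and then uses that $\ms{O}_{Z,z}$ is Noetherian to conclude that the kernel of any local presentation $\ms{O}_Z^q \to \ms{T}^1_{\ms{F}_e}f$ is again of finite type. This is shorter and avoids the gluing discussion entirely, at the cost of being somewhat terse about why stalkwise finite generation yields finite type as a sheaf. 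Your route is more explicit and arguably more robust; the paper's route is quicker. One minor point: your appeal to flatness of $\ms{O}_{N,x}$ over $\ms{O}_{Z,z}$ is neither available (finite maps need not be flat) nor needed---the compatibility of $R_{\tilde f}$ with localization follows directly from the local, pointwise nature of the $(\partial/\partial y)^{-1}$ description.
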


\begin{proof}
	Using Proposition \ref{theta integral isomorfismo}, we have
		\[\frac{R_{\tilde f_w}}{\ms{O}_Z\{1,\tilde p_1,\dots,\tilde p_n\}}\cong_{\ms{O}_Z}\frac{\theta_I(\tilde f_w)}{T\ms{L}_e\tilde f_w}=(\ms{T}_{\ms{F}_e}^1f)_{\pi(w)}\]
	Since $f$ is finite, $R_{\tilde f_w}$ is $\ms{O}_{Z,\pi(w)}$-finite, as shown in Remark \ref{R tilde f finito}.
	Therefore, the stalk of $\ms{T}_{\ms{F}_e}^1f$ at $\pi(w)$ is finitely generated and $\ms{T}_{\ms{F}_e}^1f$ is of finite type.

	Let $V \subset Z$ be an open set and $\beta\colon\ms{O}^q_{Z\upharpoonright V} \to \left(\ms{T}_{\ms{F}_e}^1f\right)_{\upharpoonright V}$ an epimorphism of $\ms{O}_Z$-modules.
	Since $\ms{O}_Z$ is a Noetherian ring, every submodule of $\ms{O}^q_{Z\upharpoonright V}$ is finitely generated.
	In particular, $\ker \beta$ is finitely generated.
	We then conclude that $\ms{T}_{\ms{F}_e}^1f$ is a coherent sheaf.
\end{proof}

\begin{theorem}[Mather-Gaffney criterion for frontal maps]\label{frontal mather gaffney}
	\label{thm: frontal mather gaffney}
	Let $f\colon (\mb{C}^n,S) \to (\mb{C}^{n+1},0)$ be a frontal map germ.
	If $f$ is finite and $\codim_\mb{C} \Sigma(\tilde f) > 1$, $f$ is $\ms{F}$-finite if and only if there exists a representative $f\colon N' \to Z'$ of $f$ such that the restriction $f\colon N' \backslash S \to Z'\backslash \{0\}$ is locally $\ms{F}$-stable.
\end{theorem}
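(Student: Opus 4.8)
The plan is to transcribe the classical coherent-sheaf proof of the Mather--Gaffney criterion, replacing the $\ms{A}$-theoretic $T^1$ by the frontal sheaf $\ms{T}^1_{\ms{F}_e}f$ and feeding in its coherence (Corollary \ref{frontal sheaf coherent}) together with the characterisation of frontal stability by vanishing $\ms{F}_e$-codimension (Corollary \ref{stable if and only if codimension 0}).

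First I would record that, since $f$ has integral corank $1$ and may be placed in prenormal form, Remark \ref{singular set nash lift} gives $\Sigma(\tilde f)=V(p_y,\mu_y)$, so the hypotheses ``$f$ finite'' and ``$\codim_\mb{C}\Sigma(\tilde f)>1$'' are exactly those of Corollary \ref{frontal sheaf coherent}. Fix accordingly a representative $f\colon N\to Z$ which is finite, satisfies $f^{-1}(0)=S$, and on which $\ms{T}^1_{\ms{F}_e}f$ is a coherent sheaf of $\ms{O}_Z$-modules. The key point is the stalkwise identification: for $z\in Z$, finiteness of $f$ gives $(f_*\ms{T}^1_{\ms{R}_e}f)_z\cong\bigoplus_{y\in f^{-1}(z)}(\ms{T}^1_{\ms{R}_e}f)_y$, and quotienting by $\omega f(\theta_Z)$ --- using that $T\ms{A}_e f_z$ is the sum of its $tf$- and $\omega f$-parts --- yields $(\ms{T}^1_{\ms{F}_e}f)_z\cong\ms{F}(f_z)/T\ms{A}_e f_z=T^1_{\ms{F}_e}f_z$, the frontal $T^1$ of the multigerm $f_z\colon(N,f^{-1}(z))\to(Z,z)$; in particular the stalk at $0$ is $T^1_{\ms{F}_e}f$. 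By Corollary \ref{stable if and only if codimension 0} this stalk vanishes precisely when $f_z$ is $\ms{F}$-stable, so $\supp\ms{T}^1_{\ms{F}_e}f$ is exactly the set of points $z\in Z$ over which $f$ fails to be $\ms{F}$-stable, and it is a closed analytic subset of $Z$ because the sheaf is coherent.

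The theorem then reduces to the standard fact that, for a coherent sheaf $\ms{G}$ on a complex space, $\dim_\mb{C}\ms{G}_0<\infty$ if and only if $0$ is an isolated point of $\supp\ms{G}$: the stalk $\ms{G}_0$ is a finitely generated $\ms{O}_{Z,0}$-module, finite $\mb{C}$-dimension is equivalent to finite length, hence to $\operatorname{Ann}(\ms{G}_0)$ being $\mathfrak{m}$-primary, hence to the germ of $\supp\ms{G}=V(\operatorname{Ann}\ms{G})$ at $0$ being $\{0\}$. For the forward implication, $\ms{F}$-finiteness of $f$ means $\dim_\mb{C}T^1_{\ms{F}_e}f<\infty$, so $0$ is isolated in $\supp\ms{T}^1_{\ms{F}_e}f$; shrinking $N\to Z$ to a representative $N'\to Z'$ with $\supp\ms{T}^1_{\ms{F}_e}f\cap Z'\subseteq\{0\}$, every stalk over $Z'\setminus\{0\}$ vanishes, i.e.\ every multigerm $f_z$ with $z\in Z'\setminus\{0\}$ is $\ms{F}$-stable, which is exactly local $\ms{F}$-stability of $f\colon N'\setminus S\to Z'\setminus\{0\}$. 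For the converse, given a representative $f\colon N'\to Z'$ with $f|_{N'\setminus S}$ locally $\ms{F}$-stable, I would shrink it (intersecting with the neighbourhood on which coherence is known) so that $\ms{T}^1_{\ms{F}_e}f$ is coherent on $Z'$; local stability forces $\supp\ms{T}^1_{\ms{F}_e}f\cap Z'\subseteq\{0\}$, so $0$ is isolated in the support and $T^1_{\ms{F}_e}f=(\ms{T}^1_{\ms{F}_e}f)_0$ is finite-dimensional, that is, $f$ is $\ms{F}$-finite.

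The step I expect to be the main obstacle is the stalkwise identification $(\ms{T}^1_{\ms{F}_e}f)_z\cong T^1_{\ms{F}_e}f_z$: one has to check that the pushforward splits over $f^{-1}(z)$ (finiteness of $f$), that $\ms{A}_e$-triviality decomposes into its $tf$- and $\omega f$-parts at multigerm level, and that each $f_z$ is again a proper frontal of integral corank $\le 1$ --- so that $\ms{F}(f_z)$ is $\mb{C}$-linear and Corollary \ref{stable if and only if codimension 0} is available --- using that $\Sigma(f)$ is nowhere dense (hence so is its restriction) and that corank is upper semicontinuous. Beyond this, the argument is a routine adaptation of the classical Mather--Gaffney proof.
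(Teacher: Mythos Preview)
Your proposal is correct and follows essentially the same route as the paper: both arguments rest on the coherence of $\ms{T}^1_{\ms{F}_e}f$ (Corollary \ref{frontal sheaf coherent}), the identification of its stalk at $z$ with $T^1_{\ms{F}_e}f_z$, and the standard equivalence (which the paper invokes as R\"uckert's Nullstellensatz) between finite $\mb{C}$-dimension of a stalk and isolatedness in the support of a coherent sheaf. The only visible difference is that the paper peels off the case $n=1$ and handles it via Proposition \ref{F finite plane curve} and the classical Mather--Gaffney criterion, whereas you run the sheaf argument uniformly; your extra care in spelling out the stalkwise identification and the fact that each $f_z$ remains a proper frontal of integral corank at most $1$ fills in details the paper leaves implicit.
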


\begin{proof}
	The case for $n=1$ follows easily from the Mather-Gaffney criterion for $\ms{A}$-equivalence and Proposition \ref{F finite plane curve}.
	Therefore, we assume $n > 1$.

	Suppose first that $f$ has finite $\ms{F}$-codimension: by Corollary \ref{frontal sheaf coherent}, $\ms{T}_{\ms{F}_e}^1f$ is a coherent sheaf.
	In addition,
		\[\dim_{\mb{C}}(\ms{T}_{\ms{F}_e}^1f)_0=\dim_{\mb{C}}T_{\ms{F}_e}^1f=\codim_{\ms{F}_e}f < \infty\]
	By Rückert's Nullstellensatz, there exists an open neighbourhood $Z'$ of $0$ in $Z$ such that $\supp \ms{T}_{\ms{F}_e}^1f\cap Z \subseteq \{0\}$.
	Therefore, every other stalk of $\ms{T}_{\ms{F}_e}^1f$ is $0$, and the restriction of $f$ to $N'\backslash\{0\}$ is $\ms{F}$-stable, where $N'=f^{-1}(Z')$.

	Conversely, suppose that there exists a representative $f\colon N' \to Z'$ such that the restriction $f\colon N' \backslash \{0\} \to Z'\backslash \{0\}$ is locally $\ms{F}$-stable.
	Given $z \in Z\backslash \{0\}$, $(\ms{T}_{\ms{F}_e}^1f)_z=0$, so there exists an open neighbourhood $U$ of $0$ in $Z$ such that $\supp \ms{T}_{\ms{F}_e}^1f\cap U \subseteq \{0\}$.
	By Rückert's Nullstellensatz, it follows that the dimension of the stalk of $\ms{T}_{\ms{F}_e}^1f$ at $0$ is finite, but that dimension is equal to $\codim_{\ms{F}_e}f$.
	We conclude that the germ of $f$ at $0$ is $\ms{F}$-finite.
\end{proof}

\section{Frontal reduction of a corank $1$ map germ}\label{frontal reductions}
In \cite{FrontalSurfaces}, we presented the notion of frontalisation for a fold surface $f \colon (\mb{C}^2,S) \to (\mb{C}^3,0)$, and proved that the frontalisation process preserves some of the topological invariants of $f$.
We also defined frontal versions of Mond's $S_k$, $B_k$, $C_k$ and $F_4$ singularities (see \cite{Mond_Classification}), observing that none of them are wave fronts.
We now seek to describe a more general procedure to generate frontals using arbitrary corank $1$ map germs.

\begin{example}
    Let $\gamma\colon (\mb{K},0) \to (\mb{K}^2,0)$ be the parametrised curve $\gamma(t)=(t^3,t^4)$: the unfolding $\Gamma\colon (\mb{K}^3\times\mb{K},0) \to (\mb{K}^3\times\mb{K}^2,0)$ given by
        \[\Gamma(u,t)=(u,t^3+u_1t,t^4+u_2t+u_3t^2)=(u,p(u,t),q(u,t))\]
    is an $\ms{A}$-miniversal deformation for $\gamma$.
    By Proposition \ref{pq criterion frontality} and since $\deg_t p_t < \deg_t q_t$, $\Gamma$ is frontal if and only if $p_t|q_t$.
    If $\mu \in \ms{O}_1$ is such that $q_t=\mu p_t$, a simple computation then shows that the identity
		\[4t^3+u_2+2u_3t=(3t^2+u_1)(\mu_1t+\mu_0)\]
    holds if and only if $u_2=\mu_0=0$, $\mu_1=4/3$ and $2u_3=3u_1$.
    Setting $h(v)=(3v,0,2v)$, we obtain the unfolding
    	\[h^*\Gamma(t,v)=(v,t^3+3vt,t^4+2vt^2)\]
    which is a swallowtail singularity.
\end{example}

In this section, we show that the frontal reduction of the versal unfolding of a plane curve is a $\ms{F}$-versal unfolding.
The proof of this result gives a procedure to compute the frontal reduction of a given unfolding (versal or otherwise) via a system of polynomial equations, which may be solved using a computer algebra system such as \textsc{Oscar} or \textsc{Singular}.

\begin{remark}[Piuseux parametrisation]\label{Piuseux theorem}
	Let $\gamma\colon (\mb{C},0) \to (\mb{C}^2,0)$ be an analytic plane curve with isolated singularities.
	There exists a $f \in \mb{C}\{x,y\}$ such that $f\circ \gamma=0$.
	By Piuseux's Theorem (see e.g. \cite{Wall}, Theorem 2.2.6, or \cite{deJongPfister}, Theorem 5.1.1), if $\alpha=\ord f$, $f(t^\alpha,t^{\alpha+1}h(t))=0$ for some $h \in \mb{C}\{t\}$.
	Therefore, $\gamma$ is $\ms{A}$-equivalent to the plane curve
		\[t \mapsto (t^\alpha,t^{\alpha+1}g(t)).\]
	In particular, $\gamma$ is $\ms{A}$-finite (and thus finitely determined) by the Mather-Gaffney criterion, so we can further assume that $g \in \mb{C}[t]$.

	If $\mb{K}=\mb{R}$, it suffices to replace $\gamma$ with its complexification $\gamma_{\mb{C}}$ in the argument above, as $\gamma$ is analytic.
	Therefore, such a parametrisation also exists in the real case.
\end{remark}

\begin{lemma}\label{versal plane curve}
	Let $\gamma\colon (\mb{K},0) \to (\mb{K}^2,0)$ be the plane curve from Remark \ref{Piuseux theorem}.
	There exists a smooth $d$-parameter deformation $(g_w)$ such that
		\[\Gamma(u,v,w,t)=\left(u,v,w,t^\alpha+\sum^{\alpha-2}_{j=1}u_jt^j,\sum^{\alpha-1}_{j=1}v_jt^j+t^{\alpha+1}g_w(t)\right)\]
	is a miniversal unfolding of $\gamma$.
\end{lemma}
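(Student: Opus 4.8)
The plan is to build the deformation $(g_w)$ by starting from an arbitrary $\ms{A}_e$-miniversal unfolding of $\gamma$ and then performing coordinate changes in source and target to bring it into the stated polynomial normal form. First I would recall from Remark \ref{Piuseux theorem} that $\gamma$ is $\ms{A}$-finite and hence has a finite-dimensional $T\ms{A}_e\gamma$; by Mather's versality theorem there exists a miniversal unfolding $\widehat\Gamma$ of $\gamma$ of some parameter dimension $N$. The monomials $t^j\,\partial/\partial Y$ for $1\le j\le \alpha-1$ together with $t^j\,\partial/\partial X$ for $1\le j\le \alpha-2$ form a natural candidate generating set for $\theta(\gamma)/T\ms{A}_e\gamma$: indeed, using $tf(\theta_1)$ one can absorb all monomials $t^j\,\partial/\partial X$ with $j\ge\alpha-1$ and $t^j\,\partial/\partial Y$ with $j\ge\alpha$ modulo lower-order terms, exploiting the fact that $\dot\gamma=(\alpha t^{\alpha-1},\ldots)$ has $X$-component a unit times $t^{\alpha-1}$.

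The key steps, in order, are: (i) write down the explicit infinitesimal unfolding
	\[\Gamma_0(u,v,t)=\Bigl(u,v,\,t^\alpha+\textstyle\sum_{j=1}^{\alpha-2}u_jt^j,\ \sum_{j=1}^{\alpha-1}v_jt^j+t^{\alpha+1}g(t)\Bigr)\]
and check via the chain rule that $\dot\Gamma_0$ spans a complement to $T\ms{A}_e\gamma$ in $\theta(\gamma)$, so that $\Gamma_0$ is infinitesimally versal and hence $\ms{A}_e$-versal by Mather's criterion; (ii) observe that the parameter count $d=(\alpha-2)+(\alpha-1)=2\alpha-3$ equals $\ms{A}_e\text{-}\mathrm{codim}(\gamma)$, so $\Gamma_0$ is in fact miniversal; (iii) absorb the remaining parameters $w$ of a general miniversal unfolding into the coefficients of $g$ by a smooth change of parameters — that is, set $g_w$ to be a polynomial perturbation of $g$ whose coefficients depend on the $w$'s so that the full list of $\dot{}$-vectors still spans the complement. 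Since any two miniversal unfoldings are equivalent, the unfolding $\Gamma$ with an extra dummy parameter block $w$ (entering only through $g_w$) remains miniversal, which is what is asserted.

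The main obstacle is step (i), the bookkeeping that shows the displayed monomials really do give a complement to $T\ms{A}_e\gamma$ and that no monomial of lower degree is needed: one must argue that $t^{\alpha-1}\,\partial/\partial X$ and $t^\alpha\,\partial/\partial Y$ lie in $T\ms{A}_e\gamma$ (the first because $\dot\gamma\in tf(\theta_1)$ has unit $\times t^{\alpha-1}$ in its first slot, the second handled similarly after correcting by a target vector field), and then induct upward in degree. This is a finite, essentially combinatorial computation with the Piuseux parametrisation $(t^\alpha,t^{\alpha+1}g(t))$, but it requires care because the $Y$-component of $\dot\gamma$ mixes several powers of $t$; the cleanest route is to filter $\theta(\gamma)$ by $t$-order and pass to the associated graded, where the statement becomes transparent. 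Once this is done, steps (ii) and (iii) are formal applications of Mather's versality theorem and the uniqueness of miniversal unfoldings.
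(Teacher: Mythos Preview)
Your step (ii) is false, and this breaks the whole plan. You claim that $\codim_{\ms{A}_e}\gamma=(\alpha-2)+(\alpha-1)=2\alpha-3$, but the codimension depends on $g$, not just on $\alpha$. Take $\gamma(t)=(t^3,t^5)$: here $\alpha=3$, so $2\alpha-3=3$, yet this is the $E_8$ singularity with $\codim_{\ms{A}_e}\gamma=4$. Your $\Gamma_0$ in this case is the $3$-parameter unfolding $(u_1,v_1,v_2,\,t^3+u_1t,\,t^5+v_2t^2+v_1t)$, which is \emph{not} versal: the direction $(0,t^4)$ is missing. So step (i) fails too --- the low-degree monomials do not span a complement to $T\ms{A}_e\gamma$ --- and step (iii) makes no sense as written, since adding genuine parameters to a miniversal unfolding would destroy miniversality, while here the $w$-block is not ``dummy'' at all.

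The ``induct upward in degree'' programme in your final paragraph is exactly where this shows up: you want $(0,t^j)\in T\ms{A}_e\gamma$ for all $j\ge\alpha$, but for $\gamma(t)=(t^3,t^5)$ one checks directly that $(0,t^4)\notin T\ms{A}_e\gamma$ (the semigroup $\langle 3,5\rangle$ misses $4$, and $tf(\theta_1)$ cannot supply it either without introducing a $t^2$-term in the first slot that nothing cancels). What the paper actually proves is the opposite inclusion: it shows
\[
T\ms{A}_e\gamma\ \subseteq\ \ms{O}_1\{\dot\gamma\}+\mf{m}_1^\alpha\ms{O}_1^2
\]
(modulo constants), which says the low-degree monomials $(t^j,0)$, $(0,t^j)$ are linearly \emph{independent} in the quotient, not that they span it. One then completes them to a basis of $T^1_{\ms{A}_e}\gamma$ by adjoining further elements $g_{2\alpha-1},\dots,g_d$; these are the vectors that define $g_w$, and they are what make $\Gamma$ miniversal. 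You have the roles of ``independent'' and ``spanning'' reversed.
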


\begin{proof}
	Let $G=\{g_1,\dots,g_d\} \subset \mb{K}[t]$ be a $\mb{K}$-basis for $T^1_{\ms{A}_e}\gamma$: by Martinet's theorem, a miniversal unfolding for $\gamma$ is given by the expression
	\begin{equation}\label{Martinet versal curve}
		\Gamma(x,t)=(x,\gamma(t)+x_1g_1(t)+\dots+x_{n-1}g_{n-1}(t))
	\end{equation}
	A simple computation shows that
	\begin{align}\label{A tangente curva plana}
		T\ms{A}_e\gamma\subseteq
			\ms{O}_1\left\{
			\begin{pmatrix}
				\alpha t^{\alpha-1}\\
				(\alpha+1)t^\alpha q_0(t)+t^{\alpha+1}q_0'(t)
			\end{pmatrix}
			\right\}+\mf{m}_1^\alpha\ms{O}_1^2.
	\end{align}
	Using Equation \eqref{A tangente curva plana}, we may assume that $g_j(t)=(t^j,0)$ and $g_{j+\alpha-2}(t)=(0,t^j)$ for $1 \leq j \leq \alpha-2$.
	Setting $g_w(t)=g(t)+w_1g_{2\alpha-1}(t)+\dots+w_{d-2\alpha+1}g_d(t)$, Equation \eqref{Martinet versal curve} becomes
	\begin{equation*}
		\Gamma(u,v,w,t)=\left(u,v,w,t^\alpha+\sum^{\alpha-2}_{j=1}u_jt^j,t^{\alpha+1}g_w(t)+\sum^{\alpha-1}_{j=1}v_jt^j\right),
	\end{equation*}
	as claimed.
\end{proof}

\begin{remark}\label{pullback polynomial}
	Let $h\colon (\mb{K}^r,0) \to (\mb{K}^d,0)$ be a smooth map-germ and $\Gamma$ be the unfolding from Lemma \ref{versal plane curve}.
	The pullback $h^*\Gamma$ is given by
		\[(h^*\Gamma)(x,t)=\left(x,t^\alpha+\sum^{\alpha-2}_{j=1}u_j(x)t^j,\sum^{\alpha-1}_{j=1}v_j(x)t^j+t^{\alpha+1}g_{w(x)}(t)\right),\]
	where $u_j(x)\equiv(u_j\circ h)(x)$, $v_j(x)\equiv(v_j\circ h)(x)$ and $w(x)\equiv(w\circ h)(x)$.
	As we saw in the proof of Lemma \ref{versal plane curve},
		\[g_w(t)=g(t)+w_1g_{2\alpha-1}(t)+\dots+w_{d-2\alpha+1}g_d(t),\]
	where $g$ can be assumed to be a polynomial function (due to Remark \ref{Piuseux theorem}).
	Therefore, the component functions of $h^*\Gamma$ are elements of $\ms{O}_r[t]$, the algebra of polynomials on $t$ with coefficients in $\ms{O}_r$.
\end{remark}

\begin{theorem}\label{existence of frontal reduction}
	If $\gamma$ has a miniversal $d$-parameter unfolding $\Gamma$, there is an immersion $h\colon (\mb{K}^l,0) \to (\mb{K}^d,0)$ with the following properties:
	\begin{enumerate}
		\item $h^*\Gamma$ is a frontal unfolding of $\gamma$ \label{frontal reduction 1};
		\item if $(h')^*\Gamma$ is frontal for any other $h'\colon (\mb{K}^{l'},0) \to (\mb{K}^d,0)$, $(h')^*\Gamma$ is equivalent as an unfolding to a pullback of $h^*\Gamma$ \label{frontal reduction 2}.
	\end{enumerate}
	Therefore, $h^*\Gamma$ is a frontal miniversal unfolding.
\end{theorem}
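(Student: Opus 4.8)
The plan is to reduce the problem to a concrete system of polynomial equations in the unfolding parameters, whose solution set carries the structure of a smooth germ, and then to verify that the associated pullback is both frontal and "universal" among frontal pullbacks. First I would work with the explicit form of $\Gamma$ given in Lemma \ref{versal plane curve}, so that $p(u,t)=t^\alpha+\sum_{j=1}^{\alpha-2}u_jt^j$ and $q(u,v,w,t)=\sum_{j=1}^{\alpha-1}v_jt^j+t^{\alpha+1}g_w(t)$. By Remark \ref{pullback polynomial}, for any smooth $h'$ the component functions of $(h')^*\Gamma$ lie in $\ms{O}_r[t]$. By Corollary \ref{pq criterion frontality}, $(h')^*\Gamma$ is frontal if and only if $p_t\mid q_t$ or $q_t\mid p_t$; since $\deg_t p_t=\alpha-1<\deg_t q_t$ generically, the relevant condition is $p_t\mid q_t$, i.e. there exists $\mu\in\ms{O}_r[t]$ with $\deg_t\mu\le 2$ (one checks the degree bound from $\deg_t q_t=\alpha+1$ versus $\deg_t p_t=\alpha-1$) such that $q_t=\mu\, p_t$. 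Writing $\mu=\mu_0+\mu_1 t+\mu_2 t^2$ and equating coefficients of $t^0,\dots,t^\alpha$ on both sides of $q_t=\mu p_t$ yields a system $E$ of $\alpha+1$ polynomial equations in the coordinates of $\mb{K}^d$ together with the auxiliary unknowns $\mu_0,\mu_1,\mu_2$.

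The key step is to show that, after eliminating $\mu_0,\mu_1,\mu_2$, the zero set of $E$ inside $(\mb{K}^d,0)$ is (the image of) a smooth germ $h\colon(\mb{K}^l,0)\to(\mb{K}^d,0)$, and moreover an immersion. For this I would argue as follows: the equations coming from the top-degree coefficients of $q_t=\mu p_t$ let one solve for $\mu_2,\mu_1,\mu_0$ successively (since $p_t$ is monic in $t$ of degree $\alpha-1$, these coefficients are $1$ or units near $0$ after the normalisation in Lemma \ref{versal plane curve}), expressing each $\mu_i$ as a polynomial in the remaining parameters; substituting back, the residual equations become linear in the $v_j$'s, so one can solve for a subset of the $v$-parameters in terms of the $u$- and $w$-parameters. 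This exhibits the solution set as a graph over a coordinate subspace, hence as a smooth germ, and the graph parametrisation is automatically an immersion. This gives Item \ref{frontal reduction 1}: $h^*\Gamma$ is frontal by construction, and it is genuinely an unfolding of $\gamma$ since $h(0)=0$.

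For Item \ref{frontal reduction 2}, suppose $(h')^*\Gamma$ is frontal for some $h'\colon(\mb{K}^{l'},0)\to(\mb{K}^d,0)$. Then by the discussion above $h'$ satisfies the same divisibility relation $p_t\mid q_t$, so (choosing the corresponding $\mu'$, which is unique since $\Gamma$ restricted to the relevant stratum is a proper frontal) the image of $h'$ lies in the zero set of $E$, i.e. in the image of $h$. Because $h$ is an immersion defining that zero set as a smooth submanifold germ, $h'$ factors as $h'=h\circ k$ for a unique smooth $k\colon(\mb{K}^{l'},0)\to(\mb{K}^l,0)$, and this is precisely the statement that $(h')^*\Gamma=k^*(h^*\Gamma)$ is a pullback of $h^*\Gamma$ (as unfoldings, after the obvious identification of parameter spaces). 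Combining Items \ref{frontal reduction 1} and \ref{frontal reduction 2}: $h^*\Gamma$ is a frontal unfolding through which every frontal pullback of $\Gamma$ factors; since $\Gamma$ is $\ms{A}$-miniversal and hence $\ms{A}$-versal, and every frontal unfolding of $\gamma$ is in particular an unfolding of $\gamma$ that can be induced from $\Gamma$, the universal property of $h^*\Gamma$ among frontal pullbacks of $\Gamma$ upgrades to versality among all frontal unfoldings of $\gamma$; minimality of the parameter count follows from the minimality of the system $E$ (no equation is redundant after the elimination performed above, so $l$ is forced).

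The main obstacle I anticipate is the regularity claim for the zero locus of $E$: a priori $E$ is just a polynomial system and its zero set could be singular or of the wrong dimension. The argument sketched above — solving for $\mu_2,\mu_1,\mu_0$ from the leading coefficients and then observing that the remaining equations are affine-linear in the $v_j$ — should force the solution set to be a linear (hence smooth) subspace germ, but one must be careful that the normalisation in Lemma \ref{versal plane curve} (which puts $g_j(t)=(t^j,0)$ and $g_{j+\alpha-2}(t)=(0,t^j)$) is actually compatible with this elimination, and that the polynomial $g$ coming from the Puiseux parametrisation does not introduce obstructions to linearity in the $w$-parameters; handling the $w$-dependence of $g_w$ carefully is where the real work lies.
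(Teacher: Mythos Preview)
Your approach is essentially the paper's: impose the divisibility $Q_t=\mu P_t$, solve for the coefficients of $\mu$ from the leading terms, and read off the residual equations as a graph $v=v(u,w)$, giving the immersion $h(u,w)=(u,v(u,w),w)$; then for Item~(2) observe that any frontal $h'$ lands in this graph and factors through it.

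There is, however, a concrete error in your degree count. You assert $\deg_t q_t=\alpha+1$ and hence $\deg_t\mu\le 2$, but $q$ contains the term $t^{\alpha+1}g_w(t)$ with $g_w$ a polynomial of \emph{a priori} unbounded degree in $t$; writing $\beta=\deg_t Q$, one has $\deg_t\mu=\beta-\alpha$, not $2$. The paper therefore writes $\mu=\mu_0+\dots+\mu_{\beta-\alpha}t^{\beta-\alpha}$ and obtains the system $kq_k=\sum_{i+j=k}iu_i\mu_j$ for $k=1,\dots,\beta$. For $k\ge\alpha$ one solves for $\mu_{k-\alpha}$ recursively (the coefficient of $\mu_{k-\alpha}$ is $\alpha u_\alpha\equiv\alpha$, a unit), and the equations for $k=1,\dots,\alpha-1$ then express $v_1,\dots,v_{\alpha-1}$ explicitly in terms of $(u,w)$. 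This directly exhibits the solution set as a graph over the $(u,w)$-coordinates, so the ``regularity obstacle'' you anticipate simply does not arise: there is no implicit-function argument needed, and the $w$-dependence of $g_w$ causes no difficulty because it only enters the coefficients $q_k$ for $k\ge\alpha$, which feed into the recursion for the $\mu_j$ rather than into the defining equations of the graph. Once you correct the degree of $\mu$, your argument for Item~(2) matches the paper's: the graph projection $\pi$ onto the $(u,w)$-coordinates satisfies $h\circ\pi\circ h'=h'$, so $(h')^*\Gamma=(\pi\circ h')^*(h^*\Gamma)$.
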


We shall denote $h^*\Gamma$ as $\Gamma_\ms{F}$ and call it a \emph{frontal reduction} of $\Gamma$.

\begin{proof}
	Let $\Gamma$ be the unfolding from Lemma \ref{versal plane curve} and $d=\codim_{\ms{A}_e}\gamma$.
	We first want to show that there is an immersion $h\colon (\mb{K}^\ell,0) \to (\mb{K}^d,0)$ making $h^*\Gamma$ a frontal map germ; to do so, we shall derive a system of equations that determines whether a given pullback yields a frontal unfolding.

	Let $(h^*\Gamma)(x,t)=(x,P(x,t),Q(x,t))$.
	By Remark \ref{pullback polynomial}, $Q \in \ms{O}_r[t]$, so we can write $Q(x,t)=q_1(x)t+\dots+q_\beta(x)t^\beta$.
	Since $h^*\Gamma$ is a corank $1$ map germ, Corollary \ref{pq criterion frontality} states that it is frontal if and only if either $P_t|Q_t$ or $Q_t|P_t$; in particular, we can assume that $\deg_t P_t \leq \deg_t Q_t$, allowing us to impose the condition $P_t|Q_t$ to $h^*\Gamma$.

	If $Q_t=\mu P_t$ for some $\mu \in \ms{O}_{r+1}$, there will exist $\mu_0,\dots,\mu_{\beta-\alpha}$ such that $\mu(x,t)=\mu_0(x)+\dots+\mu_{\beta-\alpha}(x)t^{\beta-\alpha}$.
	Therefore, the identity $Q_t=\mu P_t$ is equivalent to
	\begin{equation}\label{frontal condition unfolding}
		kq_k(x)=\sum_{i+j=k}iu_i(x)\mu_j(x)
	\end{equation}
	for $k=1,2\dots,\beta$.
	For $k \geq \alpha$, we may solve for $\mu_{k-\alpha}$ to get the expression
	\begin{align*}
		\mu_{k-\alpha}(x)=\frac{k}{\alpha}q_k(x)-\frac{1}{\alpha}\sum_{i+j=k}iu_i(x)\mu_j(x); && u_\alpha(x)\equiv 1.
	\end{align*}
	The remaining terms define an immersion germ $h\colon (\mb{K}^{d-\alpha+1},0)\to (\mb{K}^d,0)$ given by $h(u,w)=(u,v(u,w),w),$ which verifies Equation \eqref{frontal condition unfolding} by construction.
	This proves Item \ref{frontal reduction 1}.

	Let $\Lambda$ be a frontal unfolding of $\gamma$: versality of $\Gamma$ implies that $\Lambda$ is equivalent to $(h')^*\Gamma$ for some $h'\colon (\mb{K}^r,0) \to (\mb{K}^d,0)$.
	Let $h\colon V \to U$ be a one-to-one representative of $h$, $\pi\colon U \to V$ be the projection
		\[\pi(x_1,\dots,x_d)=(x_1,\dots,x_{\alpha-2}, x_{2\alpha-2}, \dots, x_d)\]
	and $h'\colon V' \to U'$ be a representative of $h'$.
	Since $(h')^*\Gamma$ is frontal, $h'$ verifies Equation \eqref{frontal condition unfolding} and thus $h'(V') \subseteq h(V)$ by construction.
	Given $v' \in V'$, there exists a unique $v \in V$ such that
		\[h'(v')=h(v) \implies (\pi\circ h')(v')=v \implies (h\circ \pi\circ h')(v')=h(v)=h'(v'),\]
	and thus $(h')^*\Gamma=(h\circ \pi\circ h')^*\Gamma=(\pi\circ h')^*(h^*\Gamma)$.
\end{proof}

\begin{example}
	Consider Arnol'd's $E_8$ singularity, $\gamma(t)=(t^3,t^5)$.
	A versal unfolding of this curve is given by
		\[(u,v,w,t^3+ut,t^5+wt^4+v_2t^2+v_1t)=(u,v,w,p(u,t),q(v,w,t))\]
	The frontal reduction of this unfolding may now be computed using Equation \eqref{frontal condition unfolding}, which can be written in matrix form as
		\[\begin{pmatrix}
			5 \\ 4w \\ 0 \\ 2v_2 \\ v_1
		\end{pmatrix}
		=
		\begin{pmatrix}
			3 & 0 & 0 \\
			0 & 3 & 0 \\
			u & 0 & 3 \\
			0 & u & 0 \\
			0 & 0 & u
		\end{pmatrix}
		\begin{pmatrix}
			\mu_2 \\ \mu_1 \\ \mu_0
		\end{pmatrix}
		\implies
		\begin{pmatrix}
			\mu_2 \\ \mu_1 \\ \mu_0
		\end{pmatrix}
		=
		\frac{1}{9}
		\begin{pmatrix}
			15 \\ 12w \\ -5u
		\end{pmatrix}
		\]
	Since this system has five equations and only three unknowns, we can now solve for $v$, yielding $v_1=-5/9u^2$ and $v_2=2/3w$.
\end{example}

\begin{remark}\label{reducing stable unfoldings}
	While the method of frontal reductions successfully turns $\ms{A}$-versal unfoldings into $\ms{F}$-versal unfoldings, the same does not hold for stable unfoldings.
	For example, given the plane curve $\gamma(t)=(t^2,t^{2k+1})$, $k > 1$, a stable unfolding of $\gamma$ is given by $f(u,t)=(u,t^2,t^{2k+1}+ut)$.
	However, the only pullback that can turn $f$ into a frontal map germ is $u(s)=0$, giving us $\gamma$, which is not stable by hypothesis.

	A more general method to compute stable unfoldings will be given in \S \ref{stable unfoldings}.
\end{remark}

\begin{corollary}\label{formula for frontal codimension}
	Given $\gamma\colon (\mb{K},0) \to (\mb{K}^2,0)$,
		\[\codim_{\ms{F}_e}\gamma=\codim_{\ms{A}_e}\gamma-\mult(\gamma)+1\]
	Consequently, if $\gamma(\mb{K},0)$ is the zero locus of some analytic $g \in \ms{O}_2$,
		\[\codim_{\ms{F}_e}\gamma=\tau(g)-\ord(g)-\frac{1}{2}\mu(g)+1.\]
\end{corollary}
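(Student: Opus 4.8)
The plan is to deduce the formula from the construction of the frontal reduction $\Gamma_{\ms F}$ in Theorem~\ref{existence of frontal reduction} together with the frontal versality theorem. By Remark~\ref{Piuseux theorem} we may assume $\gamma(t)=(t^\alpha,t^{\alpha+1}g(t))$ with $\alpha=\mult(\gamma)$, and by Lemma~\ref{versal plane curve} it has a miniversal unfolding $\Gamma$ with $d=\codim_{\ms A_e}\gamma$ parameters. Theorem~\ref{existence of frontal reduction} produces an immersion $h\colon(\mb K^{d-\alpha+1},0)\to(\mb K^d,0)$ whose pullback $\Gamma_{\ms F}=h^*\Gamma$ is a frontal miniversal unfolding of $\gamma$; since $h$ eliminates exactly the $\alpha-1$ parameters $v_1,\dots,v_{\alpha-1}$ (via Equation~\eqref{frontal condition unfolding}, solving for those $v_k$, $k<\alpha$, in terms of the remaining parameters), $\Gamma_{\ms F}$ has $d-\alpha+1$ parameters. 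By the frontal versality theorem (Theorem~\ref{frontal versality}), a miniversal frontal unfolding has exactly $\codim_{\ms F_e}\gamma$ parameters, so $\codim_{\ms F_e}\gamma=d-\alpha+1=\codim_{\ms A_e}\gamma-\mult(\gamma)+1$, which is the first identity.

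For the first formula the one point needing care is that $\Gamma_{\ms F}$ really is \emph{mini}versal — i.e.\ that the $d-\alpha+1$ infinitesimal generators it contributes are linearly independent modulo $T\ms A_e\gamma$ in $\ms F(\gamma)$. This I would get from Theorem~\ref{existence of frontal reduction}\eqref{frontal reduction 2}: any frontal unfolding factors through a pullback of $\Gamma_{\ms F}$, so $\Gamma_{\ms F}$ has the minimal possible number of parameters among versal frontal unfoldings, and by Theorem~\ref{frontal versality}(2) that minimum equals $\codim_{\ms F_e}\gamma$. Alternatively one can check directly from the explicit form of $h$ in the proof of Theorem~\ref{existence of frontal reduction} that the parameters $(u_1,\dots,u_{\alpha-2},w_1,\dots,w_{d-2\alpha+1})$ give independent classes. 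This counting step is where I expect the only real subtlety to lie; everything else is bookkeeping.

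For the second formula, suppose $\gamma(\mb K,0)=V(g)$ for analytic $g\in\ms O_2$. The classical invariants of a plane curve singularity relate the $\ms A_e$-codimension of the parametrisation to the Tjurina number $\tau(g)$ of the equation: one has $\codim_{\ms A_e}\gamma=\tau(g)-\delta+ (\text{branch correction})$, and more precisely, using $\mu(g)=2\delta-r+1$ (Milnor's formula, $r$ the number of branches) together with the identity $\codim_{\ms A_e}\gamma=\mu(g)-r+1=2\delta$ for the $\ms A_e$-codimension expressed via $\delta$-invariant — here I would instead invoke the cleaner route $\codim_{\ms A_e}\gamma=\tau(g)-\mu(g)/2 + (\text{something})$ that the authors presumably established in \cite{FrontalSurfaces} or cite from \cite{MondNuno_Singularities}. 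Substituting $\mult(\gamma)=\ord(g)$ (the multiplicity of the curve equals the order of its defining equation) into the first formula then gives
\[
\codim_{\ms F_e}\gamma=\codim_{\ms A_e}\gamma-\ord(g)+1=\tau(g)-\ord(g)-\tfrac12\mu(g)+1.
\]
The hard part here is simply citing the correct relation between $\codim_{\ms A_e}\gamma$, $\tau(g)$ and $\mu(g)$; once that is in hand the second identity is immediate from the first.
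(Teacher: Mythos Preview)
Your argument for the first identity is correct and is exactly the paper's: from the proof of Theorem~\ref{existence of frontal reduction} one reads off $l=d-\alpha+1$, and since that theorem already asserts $h^*\Gamma$ is a \emph{miniversal} frontal unfolding, Theorem~\ref{frontal versality} gives $\codim_{\ms F_e}\gamma=l$ directly (so your worry about minimality is already handled there).

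For the second identity your strategy is right but the execution is muddled. The relation you try, $\codim_{\ms A_e}\gamma=\mu(g)-r+1=2\delta$, is false and should be dropped. The inputs the paper actually uses are: $\codim_{\ms A_e}\gamma=\tau(g)-\delta(g)$ (a result in \cite{GreuelLossenShustin}), Milnor's formula $\mu(g)=2\delta(g)$ for a monogerm, and $\mult(\gamma)=\ord(g)$ (e.g.\ \cite{deJongPfister}). Substituting these into the first identity gives the second one immediately; the real case follows by complexification.
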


\begin{proof}
	In the proof of Theorem \ref{existence of frontal reduction}, we see that $l=d-\alpha+1$, where $d=\codim_{\ms{A}_e}\gamma$ and $\alpha=\mult(\gamma)$.
	Since $h^*\Gamma$ is a miniversal $l$-parameter unfolding, $\codim_{\ms{F}_e}\gamma=l$, giving the first identity.

	Now assume $\mb{K}=\mb{C}$: Milnor's formula \cite{Milnor_RedBook} states that the delta invariant $\delta(g)$ and the Milnor number $\mu(g)$ of $g$ are related via the identity $2\delta(g)=\mu(g)$, since $\gamma$ is a mono-germ.
	On the other hand, a result in \cite{GreuelLossenShustin} states that $\codim_{\ms{A}_e}\gamma=\tau(g)-\delta(g)=\tau(g)-1/2\mu(g)$, $\tau$ being the Tjurina number, hence yielding the expression
		\[\codim_{\ms{F}_e}\gamma=\tau(g)-\frac{1}{2}\mu(g)-\mult(\gamma)+1\]
	In particular, the order of $g$ is equal to $\mult(\gamma)$ (see \cite{deJongPfister} Corollary 5.1.6).

	For $\mb{K}=\mb{R}$, simply note that $\mu(g)=\mu(g_\mb{C})$, $\ord(g)=\ord(g_\mb{C})$ and $\tau(g)=\tau(g_\mb{C})$, where $g_\mb{C}$ is the complexification of $g$.
\end{proof}

\begin{example}
	Let $\gamma\colon (\mb{C},0) \to (\mb{C}^2,0)$ be the $A_{2k}$ singularity, with normalisation $\gamma(t)=(t^2,t^{2k+1})$.
	Direct computations show that
	\begin{align*}
		\frac{\theta(\gamma)}{T\ms{A}_e\gamma}\cong \Sp\{(0,t^{2\ell+1}): 0 \leq \ell < k\}; && \frac{\ms{F}(\gamma)}{T\ms{A}_e\gamma}\cong \Sp\{(0,t^{2\ell+1}): 1 \leq \ell < k\},
	\end{align*}
	from which follows that its $\ms{A}_e$-codimension is $k$ and its $\ms{F}_e$-codimension is $k-1$.
	Therefore, we have $\codim_{\ms{F}_e}\gamma=k-1=k-2+1=\codim_{\ms{A}_e}\gamma-\mult(\gamma)+1$, as expected.

	The image of $\gamma$ is given as the zero locus of the function $g(x,y)=y^2-x^{2k+1}$.
	Using the second expression for the frontal codimension, we have
		\[\tau(g)-\frac{1}{2}\mu(g)=\codim_{\ms{F}_e}\gamma+\ord(g)-1=k-1+2-1=k\]
	as expected, since both the Tjurina and Milnor numbers of $g$ are $2k$.
\end{example}

In \cite{FrontalSurfaces} \S5, we introduced the notion of frontal Milnor number $\mu_{\ms{F}}$ for a frontal multi-germ $f\colon (\mb{C}^n,S) \to (\mb{C}^{n+1},0)$.
This analytic invariant was defined in a similar fashion to Mond's image Milnor number \cite{Mond_Vanish}, only changing smooth stabilisations for frontal ones.
We then conjectured that $\mu_\ms{F}$ verified an adapted version of Mond's conjecture, which we called \emph{Mond's frontal conjecture}.

Applying \cite{FrontalSurfaces}, Proposition 5.10 to Corollary \ref{formula for frontal codimension}, we can now prove Mond's frontal conjecture in dimension $1$.

\begin{corollary}
	Given a plane curve $\gamma\colon (\mb{K},S) \to (\mb{K}^2,0)$, $\mu_{\ms{F}}(\gamma) \geq \codim_{\ms{F}}(\gamma)$, with equality if $\gamma$ is quasi-homogeneous.
\end{corollary}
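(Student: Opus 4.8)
The plan is to express both sides of the asserted inequality in terms of the classical invariants of a reduced defining equation $g\in\ms{O}_2$ of the image of $\gamma$, so that the frontal Mond conjecture collapses to the elementary inequality $\mu(g)\geq\tau(g)$ together with the characterisation of its equality case. The two ingredients that make this possible are already available: Corollary \ref{formula for frontal codimension}, which gives
\[
	\codim_{\ms{F}_e}\gamma=\tau(g)-\ord(g)-\tfrac{1}{2}\mu(g)+1 ,
\]
and \cite{FrontalSurfaces}, Proposition 5.10, which for a plane curve expresses the frontal Milnor number through Mond's image Milnor number as $\mu_{\ms{F}}(\gamma)=\mu_I(\gamma)-\mult(\gamma)+1$. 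Since for a mono-germ plane curve $\mu_I(\gamma)=\delta(g)=\tfrac{1}{2}\mu(g)$ and $\mult(\gamma)=\ord(g)$, the latter identity reads
\[
	\mu_{\ms{F}}(\gamma)=\tfrac{1}{2}\mu(g)-\ord(g)+1 .
\]

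Granting these, the corollary is immediate: subtracting the two displayed formulas, the terms $-\ord(g)+1$ cancel and we obtain
\[
	\mu_{\ms{F}}(\gamma)-\codim_{\ms{F}_e}\gamma=\mu(g)-\tau(g).
\]
The right-hand side is $\geq 0$ for any isolated plane-curve singularity, which proves $\mu_{\ms{F}}(\gamma)\geq\codim_{\ms{F}}(\gamma)$; and $\mu(g)=\tau(g)$ holds precisely when $g$ is quasi-homogeneous up to a change of coordinates, which happens if and only if the parametrised germ $\gamma$ is quasi-homogeneous, so equality holds whenever $\gamma$ is quasi-homogeneous. Equivalently, the middle step says that the defect $\mu_{\ms{F}}(\gamma)-\codim_{\ms{F}_e}\gamma$ coincides with the defect $\mu_I(\gamma)-\codim_{\ms{A}_e}\gamma$ of the classical Mond conjecture, which is known to be non-negative in dimension $1$ and to vanish exactly for quasi-homogeneous curves.

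The only step requiring genuine care is the passage to multi-germs. Corollary \ref{formula for frontal codimension} and the relation $2\delta(g)=\mu(g)$ underlying $\mu_I(\gamma)=\delta(g)$ were recorded for mono-germs; for an $r$-branch germ they must be replaced by their standard counterparts, in which the single correction $\mult(\gamma)-1$ becomes $\sum_i(\mult(\gamma_i)-1)$ and $2\delta=\mu+r-1$. I expect this bookkeeping to be the main obstacle: one has to check that the multiplicity- and branch-number corrections entering $\mu_{\ms{F}}$, $\mu_I$, $\codim_{\ms{F}_e}$ and $\codim_{\ms{A}_e}$ are the same on the frontal and on the smooth side, so that they still cancel in the difference $\mu_{\ms{F}}(\gamma)-\codim_{\ms{F}_e}\gamma$ and the reduction to $\mu(g)\geq\tau(g)$ goes through unchanged. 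Finally, the real case $\mb{K}=\mb{R}$ reduces to the complex one by replacing $\gamma$ with its complexification, as in the proof of Corollary \ref{formula for frontal codimension}, since all the invariants in play ($\mu$, $\tau$, $\ord$, $\mult$, $\mu_I$ and $\mu_{\ms{F}}$) are insensitive to complexification.
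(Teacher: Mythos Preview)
Your proposal is correct and follows essentially the same route as the paper: both arguments observe that the corrections $\mult(\gamma)-1$ cancel, so the frontal defect $\mu_{\ms{F}}(\gamma)-\codim_{\ms{F}_e}\gamma$ equals the classical defect $\mu_I(\gamma)-\codim_{\ms{A}_e}\gamma$, and then invoke the known dimension-$1$ case of Mond's conjecture. The only cosmetic difference is that the paper cites Mond's inequality $\mu_I\geq\codim_{\ms{A}_e}$ directly (using the first identity of Corollary~\ref{formula for frontal codimension}), whereas you unpack it one step further via $\mu_I=\delta(g)$ and $\codim_{\ms{A}_e}\gamma=\tau(g)-\delta(g)$ to reach $\mu(g)\geq\tau(g)$; you yourself note this equivalence, so no real divergence.
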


\begin{proof}
	Let $\gamma$ be a non-constant analytic plane curve.
	By the Curve Selection Lemma \cite{BurVerona72}, $\gamma$ has an isolated singularity at the origin, so it is $\ms{A}$-finite and
		\[\mu_I(\gamma) \geq \codim_{\ms{A}_e}(\gamma),\]
	with equality if $\gamma$ is quasi-homogeneous (see \cite{Mond_Vanish}).
	By Corollary \ref{formula for frontal codimension}, $\gamma$ is $\ms{F}$-finite and $\codim_{\ms{A}_e}(\gamma)=\codim_{\ms{F}_e}(\gamma)+\mult(\gamma)-1$.
	Using \cite{FrontalSurfaces} Proposition 5.10 and Conservation of Multiplicity (see e.g. \cite{MondNuno_Singularities}, Corollary E.4), $\mu_{\ms{F}}(\gamma)=\mu_I(\gamma)-\mult(\gamma)+1$, as stated above.
	Therefore,
		\[\mu_{\ms{F}}(\gamma)+\mult(\gamma)-1=\mu_I(\gamma) \geq \codim_{\ms{A}}(\gamma)=\codim_{\ms{F}}(\gamma)+\mult(\gamma)-1,\]
	with equality if $\gamma$ is quasi-homogeneous.
\end{proof}

Now let $f\colon (\mb{K}^n,S) \to (\mb{K}^{n+1},0)$ be a corank $1$ frontal map germ with isolated frontal instability.
We can choose coordinates in the source and target such that
\begin{align*}
	f(x,y)=(x,p(x,y),q(x,y)); && q_y=\mu p_y; && (x,y) \in \mb{K}^{n-1}\times\mb{K},
\end{align*}
for some $p,q, \mu \in \ms{O}_n$.
We then set $S'$ as the projection on the $y$ coordinate of $S$ and consider the \emph{generic slice} $\gamma\colon (\mb{K},S') \to (\mb{K}^2,0)$ of $f$, given by $\gamma(t)=(p(0,t),q(0,t))$.
Since $f$ has isolated frontal instabilities, $\gamma$ is $\ms{A}$-finite (see Proposition \ref{F finite plane curve} above) and we may consider a versal unfolding $\Gamma$ of $\gamma$ with frontal reduction
	\[\Gamma_{\ms{F}}\colon (\mb{K}^d\times \mb{K},S'\times\{0\}) \to (\mb{K}^d\times \mb{K}^2,0).\]

It is not true in general that the sum of two frontal mappings is frontal (e.g. $(x,y)\mapsto (x,y^3,y^4)$ and $(x,y) \mapsto (x,xy,0)$), but we can still construct a \emph{frontal sum} operator that yields a frontal mapping given two frontal mappings with corank at most $1$.
Let $p',q',\mu' \in \ms{O}_{d+1}$ such that
\begin{align*}
	\Gamma_{\ms{F}}(u,y)=(u,p'(u,y),q'(u,y)); 	&& q'_y=\mu p'_y:
\end{align*}
we define the \emph{frontal sum} $F\colon (\mb{K}^d\times\mb{K}^n,\{0\}\times S) \to (\mb{K}^d\times\mb{K}^{n+1},0)$ of $f$ and $\Gamma_{\ms{F}}$ as $F(u,x,y)=(u,x,P(u,x,y),Q(u,x,y))$, where
\begin{equation}\label{frontal sum}
    \begin{aligned}
		P(u,x,y)&=p(x,y)+p'(u,y)-p(0,y);\\
		Q(u,x,y)&=\int^y_0(\mu(x,s)+\mu'(u,s)-\mu(0,s))P_s(u,x,s)\,ds.
    \end{aligned}
\end{equation}
This map germ constitutes an unfolding of both $f$ and $\Gamma_{\ms{F}}$ by construction.
Versality of $\Gamma_{\ms{F}}$ then implies that $F$ is also versal, and thus stable.
Therefore, frontal sums allow us to construct stable frontal unfoldings that are not necessarily versal.

\begin{example}[Frontalised fold surfaces]
	Let $f\colon (\mb{K}^2,0) \to (\mb{K}^3,0)$ be a frontal fold surface given in the form
	\begin{align*}
		f(x,y)=(x,y^2,a_1(x)y^3+a_2(x)y^5+\dots+a_n(x)y^{2n+1}+y^{2n+3});
	\end{align*}
	wherein we assume $a_0,\dots,a_n \in \mb{K}[x]$.
	The function $t \mapsto f(0,t)$ has order $2n+3$, so $f$ can be seen as a smooth $1$-parameter unfolding of the curve
		\[\gamma(t)=(t^2,t^{2n+3}+a_n(0)t^{2n+1}+\dots+a_1(0)t^3).\]
	A frontal miniversal unfolding for $\gamma$ is given by
		\[\Gamma(u,t)=(u,t^2,t^{2n+3}+u_nt^{2n+1}+\dots+u_1t^3),\]
	and we can recover $f$ by setting $u_j(x)=a_j(x)$.
	Taking $(u,x) \mapsto (0,u_1+a_1(x),\dots,u_n+a_n(x))$ gives the stable unfolding
		\[F(u,x,t)=(u,t^2,t^{2n+3}+[u_n+a_n(x)]t^{2n+1}+\dots+[u_1+a_1(x)]t^3).\]
\end{example}

\begin{remark}\label{F(f) is linear}
	The \emph{frontal sum} defined on \eqref{frontal sum} can be used to show that $\ms{F}(f)$ is linear when $f$ has corank at most $1$: first, since $f$ is a corank $1$ frontal, we take coordinates in the source and target such that
	\begin{align*}
		f(x,y)=(x,p(x,y),q(x,y)); && q_y=\mu p_y,
	\end{align*}
	and consider the generic slice $\gamma(t)=(p(0,t),q(0,t))$.

	Let $\xi,\eta \in \ms{F}(f)$ with respective integral $\ms{F}$-curves $F=(f_u,u)$, $G=(g_u,u)$.
	Since $F$ and $G$ are unfoldings of $f$, they may also be regarded as unfoldings of $\gamma$.
	We then consider the frontal sum $H=(u,v,h_{(u,v)})$ of $F$ and $G$, and set $\hat H=(w,\hat h_w)=(w,h_{(w,w)})$.
	Note that the image of $\hat H$ is simply the intersection of the image of $H$ with the hypersurface of equation $u=v$, so $\hat H$ is frontal.
	
	Using the chain rule and Leibniz's integral rule, we see that
		\[\begin{rcases}
			P_w=P_u+P_v\\
			Q_w=Q_u+Q_v
		\end{rcases}
		\implies  \left.\frac{\p \hat h_w}{\p w}\right|_{w=0}=\xi+\eta\]
	and thus $\xi+\eta \in \ms{F}(f)$.
\end{remark}

\section{Stability of frontal map germs}\label{stable unfoldings}
In \S\ref{frontal reductions}, we described a method to generate $\ms{F}$-versal unfoldings of analytic plane curves using pullbacks.
Nonetheless, as pointed out in Remark \ref{reducing stable unfoldings}, the pullback of a stable unfolding is generally not stable as a frontal.

In this section, we describe a technique to generate stable frontal unfoldings, not too dissimilar to the method Mather used to generate all stable map germs.
We also give a classification of all $\ms{F}$-stable proper frontal map germs $(\mb{C}^3,S) \to (\mb{C}^4,0)$ of corank $1$ in \S \ref{classification}, aided by Hefez and Hernandes' Normal Form Theorem for plane curves \cite{HefHer_NFT, HefHer_NFT4}.

Let $f\colon (\mb{C}^n,S) \to (\mb{C}^{n+1},0)$ be a frontal map germ and $\xi \in \ms{F}(f)$.
By definition of $\ms{F}(f)$, $\xi$ is given by a frontal $1$-parameter unfolding $F=(f_t,t)$ of $f$; this is, $F$ verifies that
	\[d(Y\circ f_t)=\sum^n_{i=1}p_i\,d(X_i\circ f_t)+p_0\,dt\]
for some $p_0,\dots,p_n \in \ms{O}_{n+1}$.
If we now consider the vector field germ $\lambda \xi$ with $\lambda \in \ms{O}_n$, $\lambda \xi$ is given by the $1$-parameter unfolding $(\lambda f_t,t)$.
This unfolding is frontal if and only if
\begin{equation}\label{action F(f)}
	d(Y\circ \lambda f_t)=\sum^n_{i=1}q_i\,d(X_i\circ \lambda f_t)+q_0\,dt
\end{equation}
for some $q_0,\dots,q_n \in \ms{O}_{n+1}$.
Expanding on both sides of the equality and rearranging, we see that Equation \eqref{action F(f)} is equivalent to
	\[\lambda\sum^n_{i=1}(q_i-p_i)d(X_i\circ f_t)+(q_0-\lambda p_0)\,dt=[(Y\circ f_t)-\sum^n_{i=1}q_i(X_i\circ f_t)]\,d\lambda.\]
Therefore, the ring $R_f=\{\lambda \in \ms{O}_n: d\lambda \in \ms{O}_nd(f^*\ms{O}_{n+1})\}$ acts on $\ms{F}(f)$ via the usual action.
In particular, $f^*\ms{O}_{n+1} \subseteq R_f$, so $\ms{F}(f)$ is an $\ms{O}_{n+1}$-module via the action $h\xi=(h\circ f)\xi$.

If we assume that $f$ has integral corank $1$ (so that $\ms{F}(f)$ is a $\mb{K}$-vector space), we can define the $\mb{K}$-vector spaces
\begin{align*}
	T\ms{K}_{\ms{F}e}f=tf(\theta_n)+\mf{m}_{n+1}\ms{F}(f); && T^1_{\ms{K}_{\ms{F}e}}f=\frac{\ms{F}(f)}{T\ms{K}_{\ms{F}e}f}.
\end{align*}
We also define the \textbf{frontal $\ms{K}_e$-codimension} $\codim_{\ms{K}_{\ms{F}e}}f$ of $f$ as the dimension of $T^1_{\ms{K}_{\ms{F}e}}f$ in $\mb{K}$, and will say that $f$ is \textbf{$\ms{K}_{\ms{F}e}$-finite} if $\codim_{\ms{K}_{\ms{F}e}}f < \infty$.

\begin{remark}
	The space $\ms{F}(f)$ is not generally a $\ms{O}_n$-module via the usual action: consider the plane curve $\gamma\colon (\mb{K},0) \to (\mb{K}^2,0)$ given by $\gamma(t)=(t^2,t^3)$.
	Using Remark \ref{corank 1 infinitesimal integral deformation}, we see that $(0,1) \in \ms{F}(\gamma)$, but $(0,t)=t(0,1) \not\in \ms{F}(\gamma)$.
\end{remark}

Recall that the Kodaira-Spencer map is defined as the mapping $\overline \omega f\colon T_0\mb{K}^{n+1} \to T^1_{\ms{K}_e}f$ sending $v \in T_0\mb{K}^{n+1}$ onto $\omega f(\eta)$, where $\eta \in \theta_{n+1}$ is such that $\eta_0=v$.
Since $f$ is frontal, the image of $\omega f$ is contained within $\ms{F}(f)$, and the target space becomes $T^1_{\ms{K}_{\ms{F}e}}f$.
Similarly, the kernel of this $\overline \omega f$ becomes
	\[\tau(f):=(\overline\omega f)^{-1}[T\ms{K}_{\ms{F}e}f]|_0,\]
since no element in $T\ms{K}_ef\backslash \ms{F}(f)$ has a preimage.

\begin{lemma}\label{KodairaSpencer}
	The map germ $f$ is $\ms{F}$-stable if and only if $\overline\omega f$ is surjective.
\end{lemma}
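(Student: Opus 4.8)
The plan is to reduce the statement to the standard equivalence between $\ms{F}$-stability and vanishing of the $\ms{F}_e$-codimension (Corollary \ref{stable if and only if codimension 0}), and then to translate the surjectivity of $\overline\omega f$ into the equality $\ms{F}(f) = T\ms{A}_ef$ using the structure of $\ms{F}(f)$ as an $\ms{O}_{n+1}$-module established just above. First I would unwind the definition: $\overline\omega f$ is surjective onto $T^1_{\ms{K}_{\ms{F}e}}f = \ms{F}(f)/T\ms{K}_{\ms{F}e}f$ exactly when $\omega f(\theta_{n+1}) + T\ms{K}_{\ms{F}e}f = \ms{F}(f)$, i.e. when
	\[\ms{F}(f) = tf(\theta_n) + \omega f(\theta_{n+1}) + \mf{m}_{n+1}\ms{F}(f) = T\ms{A}_ef + \mf{m}_{n+1}\ms{F}(f).\]
So the claim becomes: $f$ is $\ms{F}$-stable iff $\ms{F}(f) = T\ms{A}_ef + \mf{m}_{n+1}\ms{F}(f)$. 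The forward direction is immediate from Corollary \ref{stable if and only if codimension 0}, since $\ms{F}$-stability gives $\ms{F}(f) = T\ms{A}_ef$, which trivially contains the right-hand side.

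For the converse the key tool is a Nakayama-type argument, which is why I would want $\ms{F}(f)$ to be finitely generated as an $\ms{O}_{n+1}$-module. The $\ms{O}_{n+1}$-module structure $h\xi = (h\circ f)\xi$ is exactly the one exhibited in the discussion preceding the lemma, coming from the inclusion $f^*\ms{O}_{n+1}\subseteq R_f$ and the action of $R_f$ on $\ms{F}(f)$. I would argue that $\ms{F}(f)$ is a finitely generated $\ms{O}_{n+1}$-module: via Theorem \ref{legendrian and frontal codimension} it is $t\pi$-isomorphic to $\theta_I(\tilde f)$, and for corank-$1$ integral germs Ishikawa's description (Remark \ref{corank 1 infinitesimal integral deformation}) together with the finiteness of the algebra $R_{\tilde f}$ (Remark \ref{R tilde f finito}) shows $\theta_I(\tilde f)$ is finitely generated over $\ms{O}_{n+1}$. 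Granting this, $T\ms{A}_ef$ need not be an $\ms{O}_{n+1}$-submodule, but $T\ms{K}_{\ms{F}e}f = tf(\theta_n) + \mf{m}_{n+1}\ms{F}(f)$ contains $\mf{m}_{n+1}\ms{F}(f)$, so from $\ms{F}(f) = T\ms{A}_ef + \mf{m}_{n+1}\ms{F}(f)$ we get that the finitely generated module $\ms{F}(f)/\big(tf(\theta_n)+\mf{m}_{n+1}\ms{F}(f)\big)$ is generated over $\ms{O}_{n+1}$ by finitely many elements coming from $\omega f(\theta_{n+1})|_0$, hence is generated modulo $\mf{m}_{n+1}$ by those finitely many elements; Nakayama's lemma then forces $\ms{F}(f) = tf(\theta_n) + \omega f(\theta_{n+1}) + \mf{m}_{n+1}^2\ms{F}(f)$ and, iterating (or applying the standard finite-determinacy/Nakayama package), $\ms{F}(f) = T\ms{A}_ef$. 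By Corollary \ref{stable if and only if codimension 0}, $f$ is $\ms{F}$-stable.

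A cleaner way to phrase the converse, which I would actually write, is to mimic Mather's proof that infinitesimal stability is equivalent to surjectivity of the Kodaira-Spencer map: surjectivity of $\overline\omega f$ says $\ms{F}(f) = T\ms{A}_ef + \mf{m}_{n+1}\ms{F}(f)$, and one applies the preparation theorem (in the form used for $\ms{A}$-finiteness, valid because $\ms{F}(f)$ is a finitely generated $\ms{O}_{n+1}$-module and $f$ is finite) to upgrade this to $\ms{F}(f) = T\ms{A}_ef + \omega f(\theta_{n+1})$, and then observe $\omega f(\theta_{n+1})\subseteq T\ms{A}_ef$ so that $\ms{F}(f) = T\ms{A}_ef$; stability follows from Corollary \ref{stable if and only if codimension 0}. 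The main obstacle is precisely the finite generation of $\ms{F}(f)$ as an $\ms{O}_{n+1}$-module — without it neither Nakayama nor preparation applies — so the heart of the write-up is checking that the corank-$1$ hypothesis, through Ishikawa's results and the finiteness of $R_{\tilde f}$, delivers this. Everything else is the standard Mather-type bookkeeping.
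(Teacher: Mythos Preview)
Your overall architecture matches the paper's: both directions reduce to the equivalence between $\ms{F}$-stability and $\ms{F}(f)=T\ms{A}_ef$, and the converse is a Nakayama/preparation argument applied to the identity $\ms{F}(f)=T\ms{A}_ef+\mf{m}_{n+1}\ms{F}(f)$. The forward direction is identical to the paper's.

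Where you diverge from the paper is in how finite generation over $\ms{O}_{n+1}$ is obtained. You try to establish \emph{a priori} that $\ms{F}(f)$ is a finitely generated $\ms{O}_{n+1}$-module, invoking the isomorphism with $\theta_I(\tilde f)$ and the finiteness of $R_{\tilde f}$. This route imports an extra hypothesis---finiteness of $f$---that the lemma does not assume (Remark~\ref{R tilde f finito} explicitly needs $f$ finite), and the link between finiteness of $R_{\tilde f}$ and finite generation of $\theta_I(\tilde f)$ over $\ms{O}_{n+1}$ is left as an assertion rather than justified. The paper avoids this entirely: it passes to the quotient $V'=\ms{F}(f)/tf(\theta_n)$ and observes that $\omega f(\theta_{n+1})$ is \emph{automatically} finitely generated over $\ms{O}_{n+1}$ (by the $n+1$ coordinate vector fields), so surjectivity forces $V'/\mf{m}_{n+1}V'$ to be finite-dimensional over $\mb{K}$, whence preparation yields finite generation of $V'$ and Nakayama finishes. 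In other words, the paper bootstraps the needed finiteness from the surjectivity hypothesis itself rather than from external structural results about $f$. Your approach works under the added finiteness assumption (indeed, then $\ms{F}(f)$ sits inside the Noetherian $\ms{O}_{n+1}$-module $\theta(f)$ and is automatically finitely generated---a simpler argument than the one you sketch), but the paper's version is sharper and more self-contained.
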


\begin{proof}
	Assume $f$ is $\ms{F}$-stable and let $\zeta \in \ms{F}(f)$: there exist $\xi\in \theta_n$ and $\eta \in \theta_{n+1}$ such that $\zeta=tf(\xi)+\omega f(\eta)$.
	Setting $v=\eta_0$, it follows that $\overline\omega f(v) \equiv \zeta \mod T\ms{K}_{\ms{F}e}f$, and surjectivity of $\overline\omega f$ follows.

	Conversely, assume $\overline\omega f$ is surjective: we have the identity
	\begin{equation}
		T\ms{A}_ef+\mf{m}_{n+1}\ms{F}(f)=\ms{F}(f) \label{PreNakayama}
	\end{equation}
	Set $V'=\ms{F}(f)/tf(\theta_{n,S})$ and denote by $p\colon \ms{F}(f) \to V'$ the quotient projection.
	We may then write Equation \eqref{PreNakayama} as
		\[(\pi\circ \omega f)(\theta_{n+1})+\mf{m}_{n+1}V'=V' \implies \frac{V'}{\mf{m}_{n+1} V'} \lesssim (\pi\circ \omega f)(\theta_{n+1}).\]
	Since $(p\circ\omega f)(\theta_{n+1})$ is finitely generated over $\ms{O}_{n+1}$, so is $V'/\mf{m}_{n+1}V'$.
	This implies that $V'/\mf{m}_{n+1}V'$ is finitely generated over $\mb{K}$, so $V'$ is finitely generated over $\ms{O}_{n+1}$ by Weierstrass' Preparation Theorem.
	Since $\ms{O}_{n+1}$ is a local ring, Nakayama's lemma implies that $V'=(\pi\circ \omega f)(\theta_{n+1})$, which is equivalent to $\ms{F}(f)=T\ms{A}_ef$, and frontal stability follows.
\end{proof}

\begin{theorem}\label{stability multigerm}
	A frontal $f\colon (\mb{K}^n,S) \to (\mb{K}^{n+1},0)$ with branches $f_1,\dots,f_r$ is $\ms{F}$-stable if and only if $f_1,\dots,f_r$ are $\ms{F}$-stable and the vector subspaces $\tau(f_1),\dots,\tau(f_r)\subseteq T_0\mb{K}^{n+1}$ meet in general position.
\end{theorem}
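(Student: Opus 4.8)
The plan is to reduce the statement to the Kodaira--Spencer criterion of Lemma \ref{KodairaSpencer} together with an elementary linear-algebra fact about products of linear maps, exploiting that every object involved splits as a direct sum over the branches $f_1,\dots,f_r$. Throughout I keep the standing convention that $f$, hence each $f_i$, has integral corank at most $1$, so that $\ms F(f)$ and the $\ms F(f_i)$ are $\mb K$-vector spaces.

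First I would establish the branch decomposition. Writing $S=\{x_1,\dots,x_r\}$ and $f_i=f|_{(\mb K^n,x_i)}$, the identity $\ms O_{n,S}=\prod_i\ms O_{n,x_i}$ gives $\theta(f)=\bigoplus_i\theta(f_i)$ and $tf(\theta_n)=\bigoplus_i tf_i(\theta_n)$. The crucial point is that frontality is itself local at each branch: by Proposition \ref{Ishikawa criterion frontality} an unfolding $F=(f_t,t)$ of $f$ is frontal if and only if $J_F$ is principal and $\Sigma(F)$ is nowhere dense, and an ideal of a finite product of local rings is principal if and only if each of its components is; hence $F$ is frontal if and only if every $F_i=(f_{i,t},t)$ is, which yields $\ms F(f)=\bigoplus_i\ms F(f_i)$. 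Since every branch maps to $0$, on the $i$-th summand the $\ms O_{n+1}$-action through $f^*$ coincides with that through $f_i^*$, so $\mf m_{n+1}\ms F(f)=\bigoplus_i\mf m_{n+1}\ms F(f_i)$ and therefore $T\ms K_{\ms Fe}f=\bigoplus_i T\ms K_{\ms Fe}f_i$ and $T^1_{\ms K_{\ms Fe}}f=\bigoplus_i T^1_{\ms K_{\ms Fe}}f_i$. Finally, for $\eta\in\theta_{n+1}$ the section $\omega f(\eta)$ restricts on branch $i$ to $\omega f_i(\eta)$, so the Kodaira--Spencer map decomposes componentwise as $\overline\omega f=(\overline\omega f_1,\dots,\overline\omega f_r)\colon T_0\mb K^{n+1}\to\bigoplus_i T^1_{\ms K_{\ms Fe}}f_i$, and $\tau(f_i)=\ker\overline\omega f_i\subseteq T_0\mb K^{n+1}$ by the definition of $\tau$.

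With this in place the theorem becomes linear algebra. By Lemma \ref{KodairaSpencer}, $f$ is $\ms F$-stable if and only if $\overline\omega f$ is surjective, and likewise $f_i$ is $\ms F$-stable if and only if $\overline\omega f_i$ is surjective; so it remains to prove that a product $L=(L_1,\dots,L_r)\colon V\to\bigoplus_iW_i$ of linear maps is surjective if and only if each $L_i$ is surjective and the subspaces $\ker L_i$ meet in general position, i.e. $\codim\bigcap_i\ker L_i=\sum_i\codim\ker L_i$. If every $L_i$ is onto then it induces an isomorphism $V/\ker L_i\cong W_i$, so $L$ factors as the canonical map $\pi\colon V\to\bigoplus_i V/\ker L_i$ followed by an isomorphism of the target; since $\ker\pi=\bigcap_i\ker L_i$, the map $\pi$, and hence $L$, is onto precisely when the codimensions add up. Conversely, surjectivity of $L$ forces each $L_i$ onto by composing with the projection onto $W_i$, and then the previous identification applies. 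Taking $V=T_0\mb K^{n+1}$, $W_i=T^1_{\ms K_{\ms Fe}}f_i$ and $L_i=\overline\omega f_i$, and recalling $\tau(f_i)=\ker\overline\omega f_i$, this is exactly the assertion.

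The step I expect to be the main obstacle is the branch decomposition $\ms F(f)=\bigoplus_i\ms F(f_i)$; everything after it is either a direct appeal to Lemma \ref{KodairaSpencer} or routine linear algebra. The subtlety is that $\ms F(f)$ is defined through frontal deformations rather than algebraically, so one must verify that frontality of a one-parameter unfolding of a multigerm can be checked branch by branch --- which is precisely where Proposition \ref{Ishikawa criterion frontality} and the product structure of $\ms O_{n,S}$ come in.
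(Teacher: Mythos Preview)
Your proposal is correct and follows essentially the same route as the paper's own proof: reduce to the Kodaira--Spencer criterion (Lemma \ref{KodairaSpencer}), use the branch decomposition $T^1_{\ms K_{\ms Fe}}f\cong\bigoplus_i T^1_{\ms K_{\ms Fe}}f_i$ induced by $\ms O_{n,S}\cong\prod_i\ms O_{n,s_i}$, and then the linear-algebra characterisation of general position via surjectivity of the canonical map $T_0\mb K^{n+1}\to\bigoplus_i T_0\mb K^{n+1}/\tau(f_i)$. Your write-up is in fact more careful than the paper's on the two points the paper leaves implicit --- the justification that $\ms F(f)=\bigoplus_i\ms F(f_i)$ and the elementary linear-algebra lemma about surjectivity of a product of linear maps --- so nothing needs to change.
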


\begin{proof}
	Let $g$ be either $f$ or one of its branches.
	By Lemma \ref{KodairaSpencer}, $g$ is $\ms{F}$-stable if and only if $\overline\omega g$ is surjective; this is,
	\begin{equation}\label{stable branches 1}
		\frac{\ms{F}(g)}{T\ms{K}_{\ms{F}_e}g}\cong \frac{T_0\mb{K}^{n+1}}{\ker\overline\omega g}=\frac{T_0\mb{K}^{n+1}}{\tau(g)}
	\end{equation}

	Let $S=\{s_1,\dots,s_r\}$, the ring isomorphism $\ms{O}_{n,S} \to \ms{O}_{n,s_1}\oplus\dots\oplus \ms{O}_{n,s_r}$ induces a module isomorpism $\ms{F}(f) \to \ms{F}(f_1)\oplus\dots \ms{F}(f_r)$, which in turn induces an isomorphism
	\begin{diagram}\label{stable branches 2}
		\frac{\ms{F}(f)}{T\ms{K}_{\ms{F}_e}f} \arrow[r] & \frac{\ms{F}(f_1)}{T\ms{K}_{\ms{F}_e}f_1} \oplus \dots \oplus \frac{\ms{F}(f_r)}{T\ms{K}_{\ms{F}_e}f_r}
	\end{diagram}
	On the other hand, the spaces $\tau(f_i)$ meet in general position if and only if the canonical map
	\begin{diagram}\label{stable branches 3}
		T_0\mb{K}^{n+1} \arrow[r] & \frac{T_0\mb{K}^{n+1}}{\tau(f_1)}\oplus \dots \oplus \frac{T_0\mb{K}^{n+1}}{\tau(f_r)}
	\end{diagram}
	is surjective.
	The statement then follows from (\ref{stable branches 1} - \ref{stable branches 3}).
\end{proof}

We now use Ephraim's theorem to give a geometric interpretation to $\tau(f_i)$, $i=1,\dots,r$.
Recall that the isosingular locus $\Iso(D,x_0)$ of a complex space $D \subseteq W$ at $x_0$ is defined as the germ at $x_0$ of the set of points $x \in D$ such that $(D,x)$ is diffeomorphic to $(D,x_0)$.
Ephraim \cite{Ephraim} showed that $\Iso(D,x_0)$ is a germ of smooth submanifold of $(W,x_0)$ and its tangent space at $x_0$ is given by the evaluation at $x_0$ of the elements in the space
	\[\Der(-\log (D,x_0))=\{\xi \in \theta_W: \xi(I)\subseteq I\}\]
where $I\subset \ms{O}_W$ is the ideal of map germs vanishing on $(D,x_0)$.
We shall now use this result to give a geometric interpretation to the space $\tau(f)$.

\begin{proposition}\label{hat tau is the lift at 0}
	Let $f\colon (\mb{C}^n,S) \to (\mb{C}^{n+1},0)$ be a finite, frontal map germ with integral corank $1$.
	If $f$ is $\ms{F}$-stable and $\codim \Sigma(\tilde f) > 1$, $\tau(f)$ is the tangent space at $0$ of $\Iso(f(\mb{C}^n,S))$.
\end{proposition}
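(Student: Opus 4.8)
The plan is to read $\tau(f)$ off $\ms{F}$-stability, identify it with the space of vector fields liftable over $f$ evaluated at the origin, and then match that with Ephraim's description of the tangent space to $\Iso(X,0)$, where $X=f(\mb{C}^n,S)$ carries its reduced structure. Write $\Lift(f)=\{\eta\in\theta_{n+1}:\omega f(\eta)\in tf(\theta_n)\}$, an $\ms{O}_{n+1}$-submodule of $\theta_{n+1}$. Since $f$ is $\ms{F}$-stable, $\ms{F}(f)=T\ms{A}_ef=tf(\theta_n)+\omega f(\theta_{n+1})$, and hence $T\ms{K}_{\ms{F}e}f=tf(\theta_n)+\omega f(\mf{m}_{n+1}\theta_{n+1})$. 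Starting from $\tau(f)=(\overline\omega f)^{-1}[T\ms{K}_{\ms{F}e}f]|_0$ I would then check the elementary identity $\tau(f)=\{\eta(0):\eta\in\Lift(f)\}$: if $\omega f(\eta)=tf(\xi)+\omega f(\eta_1)$ with $\eta_1\in\mf{m}_{n+1}\theta_{n+1}$, then $\eta-\eta_1\in\Lift(f)$ and $(\eta-\eta_1)(0)=\eta(0)$ because $\eta_1(0)=0$; conversely $tf(\theta_n)\subseteq T\ms{K}_{\ms{F}e}f$ gives the other inclusion. This step is purely formal and is the only place $\ms{F}$-stability is used directly.

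Next I would bring in the image. The germ $X=f(\mb{C}^n,S)$, with its reduced structure, is a reduced frontal hypersurface germ, since $f$ is finite and $\Sigma(f)$ is nowhere dense. Moreover $\ms{F}$-finiteness forces the generic slice of $f$ to be $\ms{A}$-finite by Proposition \ref{F finite plane curve}, hence generically one-to-one, while the hypothesis $\codim\Sigma(\tilde f)>1$ (see Remark \ref{singular set nash lift}) excludes the folding behaviour that would otherwise spoil injectivity; so $f$ is generically one-to-one and $(\mb{C}^n,S)$ is the normalisation of $(X,0)$. By Ephraim's theorem, $T_0\Iso(X,0)=\{\eta(0):\eta\in\Der(-\log(X,0))\}$. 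Comparing with the first paragraph, the proposition reduces to the single identity $\Lift(f)=\Der(-\log(X,0))$.

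For this identity, the inclusion $\Lift(f)\subseteq\Der(-\log(X,0))$ is standard: if $\omega f(\eta)=tf(\xi)$, then $\eta$ is tangent to $f(\mb{C}^n\setminus\Sigma(f))$, hence to $X$, so $\eta(g)$ vanishes on $X$ and lies in $(g)$ for a reduced equation $g$ of $X$. For the reverse inclusion I would integrate: an $\eta\in\Der(-\log(X,0))$ has a local flow $\psi_t$, a one-parameter family of germs of diffeomorphisms of $(\mb{C}^{n+1},0)$ with $\psi_t(X)=X$; by the universal property of the normalisation each $\psi_t$ lifts to a unique germ of biholomorphism $\phi_t$ of $(\mb{C}^n,S)$ with $f\circ\phi_t=\psi_t\circ f$ (it is a diffeomorphism because $\psi_t^{-1}$ lifts its inverse and uniqueness forces the composites to be the identity). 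Differentiating at $t=0$ gives $\omega f(\eta)=tf(\dot\phi_0)$, so $\eta\in\Lift(f)$. Together with the two reductions above this yields $\tau(f)=T_0\Iso(X,0)$.

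I expect the reverse inclusion $\Der(-\log(X,0))\subseteq\Lift(f)$ to be the main obstacle, together with the structural input it rests on — that $f$ is generically one-to-one and that $(\mb{C}^n,S)$ is genuinely the normalisation of $(X,0)$ rather than merely a finite cover — which is precisely where finiteness, $\ms{F}$-finiteness and $\codim\Sigma(\tilde f)>1$ are needed. For a multigerm one runs the flow-lifting argument branch by branch, using that the branches and their images are distinct so that near $0$ the situation is the disjoint union of the mono-germ pictures; this is the geometric content behind the "general position" appearing in Theorem \ref{stability multigerm}.
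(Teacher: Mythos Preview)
Your proposal is correct and follows essentially the same three-step scheme as the paper: identify $\tau(f)$ with $\{\eta(0):\eta\in\Lift(f)\}$ using $\ms{F}$-stability, invoke Ephraim to get $T_0\Iso(X,0)=\{\eta(0):\eta\in\Der(-\log(X,0))\}$, and reduce everything to the equality $\Lift(f)=\Der(-\log(X,0))$. Your computation $T\ms{K}_{\ms{F}e}f=tf(\theta_n)+\omega f(\mf{m}_{n+1}\theta_{n+1})$ and the ensuing ``subtract off $\eta_1$'' argument are exactly what the paper does, just packaged a little more compactly.

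The one substantive difference is in how the identity $\Lift(f)=\Der(-\log(X,0))$ is established. The paper isolates this as a separate lemma (Lemma~\ref{derlog is the lift}), stated under the hypotheses ``$f$ finite, $\ms{F}$-finite, $\codim V(p_y,\mu_y)>1$'' and attributed to \cite{MondNuno_Singularities}; it is simply quoted. You instead sketch a direct proof: the easy inclusion by tangency, and the hard one by integrating $\eta\in\Der(-\log(X,0))$ to a flow on $(X,0)$ and lifting via the universal property of the normalisation. That argument is sound provided $f$ really is the normalisation of its image, i.e.\ $f$ is finite and generically one-to-one; this is exactly what the paper's hypotheses on $\ms{F}$-finiteness and $\codim\Sigma(\tilde f)>1$ are buying, and you correctly flag it as the point needing care. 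The paper's route is shorter because it outsources this step; yours is more self-contained and makes visible why the extra hypotheses are there.
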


To prove this result, we shall make use of the following

\begin{lemma}[cf. \cite{MondNuno_Singularities}]\label{derlog is the lift}
	Let $f\colon (\mb{C}^n,S) \to (\mb{C}^{n+1},0)$ be a finite, frontal map germ with integral corank $1$ and $\xi \in \theta_{n+1}$.
	If $f$ is $\ms{F}$-finite and $\codim V(p_y,\mu_y) > 1$,
    \[\Der(-\log f)=\Lift(f):=\{\eta \in \theta_{n+1}: \omega f(\eta)=tf(\xi) \text{ for some }\xi \in \theta_n\}.\]
\end{lemma}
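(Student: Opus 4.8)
The plan is to prove the two inclusions $\Lift(f) \subseteq \Der(-\log f)$ and $\Der(-\log f) \subseteq \Lift(f)$ separately, exploiting the corank $1$ prenormal form $f(x,y) = (x, p(x,y), q(x,y))$ with $q_y = \mu p_y$ and the description of $\ms{F}(f)$ from Remark \ref{corank 1 infinitesimal integral deformation}.

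\medskip

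\noindent\textbf{The easy inclusion.} First I would show $\Lift(f) \subseteq \Der(-\log f)$. This inclusion is essentially formal and holds for any map germ whose image is a hypersurface: if $g \in \ms{O}_{n+1}$ is a reduced defining equation for $X = f(\mb{C}^n,S)$, then $g \circ f = 0$, so applying any $\eta \in \theta_{n+1}$ with $\omega f(\eta) = tf(\xi)$ gives $\eta(g) \circ f = dg(\omega f(\eta)) = dg(tf(\xi)) = \xi(g \circ f) = 0$; since $f$ is finite and $X$ is a hypersurface, $g \circ f = 0$ forces $\eta(g)$ to vanish on $X$, i.e.\ $\eta(g) \in (g)$ (using that $(g)$ is radical, as $f$ is proper and generically one-to-one onto a reduced hypersurface). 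Hence $\eta \in \Der(-\log X) = \Der(-\log f)$. I would make sure the finiteness and properness hypotheses are invoked precisely here to guarantee $I = (g)$ is the full ideal of the image and is radical.

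\medskip

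\noindent\textbf{The hard inclusion.} The converse $\Der(-\log f) \subseteq \Lift(f)$ is the substantive part. Given $\eta \in \Der(-\log f)$, so $\eta(g) = \rho g$ for some $\rho \in \ms{O}_{n+1}$, I want to produce $\xi \in \theta_n$ with $\omega f(\eta) = tf(\xi)$. The standard Mond--Nuño argument (the reference \cite{MondNuno_Singularities} is cited) shows that for a finite map germ $f$ of corank $1$ whose image is a hypersurface and which is "nice" enough (here: $\ms{F}$-finite, $\codim V(p_y,\mu_y) > 1$), one has $\omega f(\theta_{n+1}) \cap \mathrm{(image\ of\ }\omega f) \subseteq tf(\theta_n) + (\text{something that vanishes})$. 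Concretely: write $\omega f(\eta) = \zeta \in \theta(f)$; the condition $\eta(g) \in (g)$ translates, via the chain rule on the defining relation, into $\zeta$ being tangent to the fibres of $f$ in the appropriate sense, and then one uses that the only components of $\theta(f)$ not reachable by $tf(\theta_n)$ are detected by $\ms{F}(f)/T\ms{R}_ef$ and by the hypothesis $\codim V(p_y,\mu_y)>1$ these coincide generically. The key technical input is the identification $R_{\tilde f} = (\partial/\partial y)^{-1}\ms{O}_n\{\tilde f_{n,y},\dots\}$ from Remark \ref{R tilde f finito} together with Proposition \ref{theta integral isomorfismo}, which let me trade the lifting problem downstairs for a division problem by $p_y$ in the prenormal form. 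I expect the main obstacle to be verifying that an element of $\omega f(\theta_{n+1})$ lying in $\Der(-\log f)$ actually lands in $tf(\theta_n)$ and not merely in $tf(\theta_n) + (\text{non-frontal directions})$; this is exactly where $\ms{F}$-finiteness and the codimension condition on $\Sigma(\tilde f)$ must be used to rule out a "genericity defect" along the singular locus, via a Nullstellensatz/support argument like the one in the proof of Theorem \ref{frontal mather gaffney}.

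\medskip

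\noindent\textbf{Assembling.} Once both inclusions are in place, the equality $\Der(-\log f) = \Lift(f)$ follows. I would remark that this lemma is the frontal analogue of the classical fact (for $\ms{A}$-finite map germs defining hypersurfaces) that logarithmic vector fields of the image are exactly the liftable vector fields, and that the role of $\ms{A}$-finiteness there is played here by $\ms{F}$-finiteness plus $\codim V(p_y,\mu_y) > 1$, which guarantee via Corollary \ref{frontal sheaf coherent} and the Mather--Gaffney-type Theorem \ref{frontal mather gaffney} that $f$ is locally $\ms{F}$-stable off the origin, so no pathology obstructs the lift away from $S$.
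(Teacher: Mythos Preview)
The paper does not supply its own proof of this lemma: it is stated with the attribution ``cf.~\cite{MondNuno_Singularities}'' and the text proceeds directly to the proof of Proposition~\ref{hat tau is the lift at 0}. So there is no in-paper argument to compare your proposal against; the authors treat the result as an adaptation of the classical identity $\Der(-\log f)=\Lift(f)$ for finite, $\ms{A}$-finite map germs onto hypersurfaces, and defer the details to the cited book.

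Your outline of the easy inclusion $\Lift(f)\subseteq\Der(-\log f)$ is correct and is exactly the standard argument. Your treatment of the hard inclusion, however, is more a list of ingredients than a proof: phrases like ``$\zeta$ being tangent to the fibres of $f$ in the appropriate sense'' and ``rule out a genericity defect'' do not pin down the mechanism. The classical route (which is what the citation points to) is not a division-by-$p_y$ computation but a normalization argument: since $f$ is finite and generically one-to-one onto its image, $f$ realises $(\mb{C}^n,S)$ as the normalization of the reduced hypersurface $(X,0)=V(g)$; a vector field $\eta$ with $\eta(g)\in(g)$ integrates to a flow preserving $X$, and this flow lifts uniquely to the normalization by the universal property, yielding the source vector field $\xi$ with $\omega f(\eta)=tf(\xi)$. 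The hypotheses $\ms{F}$-finite and $\codim V(p_y,\mu_y)>1$ are there to ensure $f$ is indeed generically one-to-one and that the image is reduced (so that the normalization picture applies), not to feed a sheaf-coherence or Nullstellensatz step as you suggest. If you want to complete the argument, replace the vague middle paragraph with the normalization lift; invoking Remark~\ref{R tilde f finito} and Proposition~\ref{theta integral isomorfismo} is a detour that does not by itself produce $\xi$.
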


\begin{proof}[Proof of Proposition \ref{hat tau is the lift at 0}]
	By Ephraim's theorem \cite{Ephraim}, the tangent space to $\Iso(f(\mb{C}^n,S))$ at $0$ is given by the evaluation at $0$ of the elements in $\Der(-\log f)$.
	Using Lemma \ref{derlog is the lift}, $\Der(-\log f)$ is the space of elements in $\theta_{n+1}$ that are liftable via $f$.
	Therefore, we only need to show that the evaluation of $0$ of this space coincides with $\tau(f)$.

	Let $\eta \in \Lift(f)$: there exists a $\xi \in \theta_n$ such that $\omega f(\eta)=tf(\xi) \in T\ms{K}_{\ms{F}_e}f$, so $\eta|_0 \in \tau(f)$.
	Conversely, if $\eta \in \theta_{n+1}$ verifies that $\eta|_0 \in \tau(f)$, there exist $\xi\in \theta_n$, $\zeta \in \ms{F}(f)$ such that
		\[\omega f(\eta)=tf(\xi)+(f^*\beta)\zeta\]
	for some $\beta \in \mf{m}_{n+1}$.
	Since $f$ is $\ms{F}$-stable, $\ms{F}(f)=T\ms{A}_ef$, which implies that
		\[(f^*\mf{m}_{n+1})\ms{F}(f)=
			(f^*\mf{m}_{n+1})[tf(\theta_n)+\omega f(\theta_{n+1})]\subseteq
			tf(\mf{m}_n\theta_n)+\omega f(\mf{m}_{n+1}\theta_{n+1})\]
	Therefore, there exist $\xi' \in \mf{m}_n\theta_n$ and $\eta' \in \mf{m}_{n+1}\theta_{n+1}$ such that
		\[(f^*\beta)\zeta=tf(\xi')+\omega f(\eta') \implies \omega f(\eta-\eta')=tf(\xi+\xi')\]
	and $\eta-\eta' \in \Lift(f)$.
	In particular, if $s \in S$, $(\eta-\eta')|_0=\omega f(\eta-\eta')|_s=v-0=v$, thus finishing the proof.
\end{proof}

\subsection{Generating stable frontal unfoldings}
The generation of stable unfoldings in Thom-Mather's theory of smooth deformations is done by computing the $\ms{K}_e$-tangent space of a smooth map germ $f\colon (\mb{K}^n,0) \to (\mb{K}^p,0)$ of rank $0$.
If $\mf{m}_n\theta(f)/T\ms{K}_ef$ is generated over $\mb{K}$ by the classes of $g_1,\dots,g_s\in \ms{O}_n$, Martinet's theorem (\cite{MondNuno_Singularities}, Theorem 7.2) states that the map germ
	\[F(u,x)=(u,f(x)+u_1g_1(x)+\dots+u_sg_s(x))\]
is a stable unfolding of $f$.
While such a result fails to yield frontal unfoldings of frontal map germs, if $f$ has corank $1$, we can still make use of the frontal sum operation defined on \S \ref{frontal reductions} to formulate a frontal version of Martinet's theorem.

\begin{lemma}\label{stable unfolding lemma}
	Let $f\colon (\mb{K}^n,0) \to (\mb{K}^{n+1},0)$ be a frontal map germ of integral corank $1$ with frontal unfolding $F=(u,f_u)$, and $(u,y)$ be local coordinates on $(\mb{K}^d\times \mb{K}^{n+1},0)$.
	There is an $\ms{O}_{n+d+1}$-linear isomorphism
		\[\beta\colon \frac{\ms{F}(F)}{T\ms{K}_{\ms{F}e}F} \longrightarrow \frac{\ms{F}(f)}{T\ms{K}_{\ms{F}e}f}\]
	induced by the $\ms{O}_{n+d}$-linear epimorphism $\beta_0\colon \theta(F) \to \theta(f)$ sending $\p y_i$ onto $\p y_i$ for $i=1,\dots,n+1$ and $\p u_j$ onto $-\dot F_j$ for $j=1,\dots,d$.
\end{lemma}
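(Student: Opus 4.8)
\subsection*{Proof sketch}

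The plan is to adapt the classical argument that a $d$-parameter unfolding has the same $\ms{K}_e$-normal space as the germ it unfolds (cf.\ \cite{MondNuno_Singularities}) to the frontal setting. I work in the corank-one prenormal form $F(u,x,y)=(u,x,P,Q)$ with $Q_y=MP_y$ and $M|_{u=0}=\mu$ (the general integral-corank-one case is identical, using the Darboux-coordinate description of $\ms{F}$ in Remark \ref{corank 1 infinitesimal integral deformation}); note that $F$ is again a proper frontal, since $P_y|_{u=0}=p_y\ne 0$ and so $\Sigma(F)=V(P_y)$ is nowhere dense. By Remark \ref{corank 1 infinitesimal integral deformation}, $\ms{F}(f)=L_f^{-1}(J_f)$ and $\ms{F}(F)=L_F^{-1}(J_F)$, where $L_f\colon\theta(f)\to\ms{O}_n$ sends $\xi$ to $\frac{\p\xi_{n+1}}{\p y}-\sum_{j=1}^{n-1}P_j\frac{\p\xi_j}{\p y}+\mu\frac{\p\xi_n}{\p y}$, $L_F$ is its analogue for $F$ (with the extra fibre coordinates $\frac{\p Q}{\p u_k}-M\frac{\p P}{\p u_k}$), and $J_f=(p_y)$, $J_F=(P_y)$ are the Jacobian ideals.

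First I would record the linear algebra of $\beta_0$: it is a well-defined $\ms{O}_{n+d}$-linear epimorphism onto $\theta(f)$, regarded as an $\ms{O}_{n+d}$-module through $u\mapsto 0$; it satisfies $\beta_0\big(tF(\theta_{n+d})\big)=tf(\theta_n)$; and its kernel is the submodule generated over $\ms{O}_{n+d}$ by $tF(\p/\p u_1),\dots,tF(\p/\p u_d)$ together with $(u)\theta(F)$. The compatibility of $\beta_0$ with $\ms{F}$ rests on the congruence $L_f(\beta_0(\xi))\equiv L_F(\xi)|_{u=0}\pmod{J_f}$, valid for every $\xi\in\theta(F)$: expanding $L_f(\beta_0(\xi))$ and using $\p_{u_k}(Q_y)=M_{u_k}P_y+M\,\p_{u_k}(P_y)$, every discrepancy term is a multiple of $p_y$. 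Since $J_F|_{u=0}\subseteq J_f$ this gives $\beta_0(\ms{F}(F))\subseteq\ms{F}(f)$, and combined with $\beta_0(tF(\theta_{n+d}))=tf(\theta_n)$ and $\ms{O}_{n+d}$-linearity it gives $\beta_0(T\ms{K}_{\ms{F}e}F)\subseteq T\ms{K}_{\ms{F}e}f$. For the reverse inclusions I would show $\beta_0$ restricts to a surjection $\ms{F}(F)\to\ms{F}(f)$: given $\eta\in\ms{F}(f)$ with $L_f(\eta)=h\,p_y$, lift $\eta$ to $\xi\in\theta(F)$ keeping its first $n$ components and defining the last one by an explicit integration in $y$, arranged so that $L_F(\xi)=h\,P_y\in J_F$ and $\xi|_{u=0}=\eta$; then $\xi\in\ms{F}(F)$ and $\beta_0(\xi)=\eta$, whence also $\beta_0(T\ms{K}_{\ms{F}e}F)=T\ms{K}_{\ms{F}e}f$.

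The remaining, and hardest, point is the inclusion $\ms{F}(F)\cap\ker\beta_0\subseteq T\ms{K}_{\ms{F}e}F$, which gives injectivity of the induced map. Using the description of $\ker\beta_0$ and $tF(\theta_{n+d})\subseteq\ms{F}(F)\cap T\ms{K}_{\ms{F}e}F$, one reduces to $\ms{F}(F)\cap(u)\theta(F)=(u)\ms{F}(F)$, which lies in $T\ms{K}_{\ms{F}e}F$ because $(u)\subseteq F^{*}\mf{m}_{n+d+1}$. Here two facts enter. First, $L_F\colon\theta(F)\to\ms{O}_{n+d}$ is \emph{split} by the map $s$ sending $h$ to the vector field whose sole nonzero component is the last, equal to $\int_0^y h\,ds$; thus $\theta(F)=\ker L_F\oplus s(\ms{O}_{n+d})$, hence $\ker L_F\cap(u)\theta(F)=(u)\ker L_F$, and $s$ is linear over functions not involving $y$, in particular over the $u_j$. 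Second, $P_y$ is a non-zero-divisor modulo $(u)$, because $P_y|_{u=0}=p_y$ and $V(p_y)=\Sigma(f)$ is nowhere dense; hence $J_F\cap(u)\ms{O}_{n+d}=(u)J_F$. Now if $\zeta\in\ms{F}(F)\cap(u)\theta(F)$ then $L_F(\zeta)\in J_F\cap(u)\ms{O}_{n+d}=(u)J_F$, so $s(L_F(\zeta))\in(u)\ms{F}(F)$; and $\zeta-s(L_F(\zeta))\in\ker L_F\cap(u)\theta(F)=(u)\ker L_F\subseteq(u)\ms{F}(F)$. Therefore $\zeta\in(u)\ms{F}(F)$, as required.

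Finally I would assemble the pieces: $\beta_0(\ms{F}(F))=\ms{F}(f)$ and $\beta_0(T\ms{K}_{\ms{F}e}F)=T\ms{K}_{\ms{F}e}f$ make $\beta$ a well-defined surjection, while $\ms{F}(F)\cap\ker\beta_0\subseteq T\ms{K}_{\ms{F}e}F$ makes it injective; the $\ms{O}_{n+d+1}$-linearity of $\beta$ follows from the $\ms{O}_{n+d}$-linearity of $\beta_0$ together with the compatibility of the module structures through $\ms{O}_{n+d+1}\xrightarrow{u=0}\ms{O}_{n+1}\xrightarrow{f^{*}}\ms{O}_n$. I expect the crux to be the identity $\ms{F}(F)\cap(u)\theta(F)=(u)\ms{F}(F)$, for which properness of $f$ — keeping $P_y$ a non-zero-divisor after the unfolding parameters are switched off — is precisely what is needed; in the classical $\ms{K}_e$-theory this step comes for free because there one quotients by $\mf{m}\,\theta$ rather than by $\mf{m}\,\ms{F}$.
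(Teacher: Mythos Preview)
Your argument is correct, but it takes a substantially different route from the paper's. The paper's proof is much shorter because it leverages two observations that let it piggy-back on the classical $\ms{K}_e$-theory rather than redo it. First, the paper invokes \cite{MondNuno_Singularities}, Lemma~5.5, which already says that $\beta_0$ induces an $\ms{O}_{n+d+1}$-linear isomorphism $\beta_1\colon T^1_{\ms{K}_e}F\to T^1_{\ms{K}_e}f$; in particular $\ker\beta_0\subseteq T\ms{K}_eF$. Second, it records the identity $T\ms{K}_{\ms{F}e}g=T\ms{K}_eg\cap\ms{F}(g)$ for any frontal $g$ of integral corank~$1$. With these in hand, the only thing left to check is $\beta_0(\ms{F}(F))=\ms{F}(f)$: injectivity of $\beta$ then follows immediately, since $\xi\in\ms{F}(F)$ with $\beta_0(\xi)\in T\ms{K}_{\ms{F}e}f\subseteq T\ms{K}_ef$ forces $\xi\in T\ms{K}_eF$ (by injectivity of $\beta_1$), hence $\xi\in T\ms{K}_eF\cap\ms{F}(F)=T\ms{K}_{\ms{F}e}F$. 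For the remaining step $\beta_0(\ms{F}(F))=\ms{F}(f)$, the paper gives a short geometric argument: if $(t,F_t)$ is a frontal $1$-parameter unfolding of $F$, restricting to $u=0$ gives a frontal unfolding $(t,f_t)$ of $f$ whose initial velocity is $\beta_0(\dot F_0)$; conversely any frontal $(t,f_t)$ extends trivially to a frontal unfolding $(t,u,f_t)$ of $F$.

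Your approach is self-contained and coordinate-based: you verify $\beta_0(\ms{F}(F))=\ms{F}(f)$ by explicit integration rather than by restricting frontal curves, and you replace the appeal to the classical $\ms{K}_e$-isomorphism by the direct computation $\ms{F}(F)\cap(u)\theta(F)=(u)\ms{F}(F)$ via the splitting $\theta(F)=\ker L_F\oplus s(\ms{O}_{n+d})$ and the non-zero-divisor property of $P_y$ modulo $(u)$. This is a perfectly valid alternative and has the merit of isolating exactly where properness of $f$ enters; but the paper's route, via the single identity $T\ms{K}_{\ms{F}e}=T\ms{K}_e\cap\ms{F}$, makes the frontal statement an almost formal consequence of the classical one and avoids your splitting argument altogether.
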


\begin{proof}
	In \cite{MondNuno_Singularities}, Lemma 5.5, it is shown that $\beta_0$ induces a $\ms{O}_{n+d}$-linear isomorphism $\beta_1\colon T^1_{\ms{K}_e}F \to T^1_{\ms{K}_e}f$.
	In particular, we can consider $\beta_0$ as a $\ms{O}_{n+d+1}$-epimorphism via $F^*$.
	Note that $T\ms{K}_{\ms{F}e}g=T\ms{K}_eg\cap \ms{F}(g)$ for any frontal map germ $g$ with integral corank $1$, so it suffices to show that $\beta_0$ sends $\ms{F}(F)$ onto $\ms{F}(f)$.

	Let $\xi \in \theta(F)$ with integral $\ms{F}$-curve $F_t$: the integral $\ms{F}$-curve for $\beta_0(\xi)$ is given by
	\begin{align*}
		f_t=i^*(\pi\circ F_t); && \pi(t,u,y)=(t,y); && i(x)=(0,x).
	\end{align*}
	In particular, if $(t,F_t)$ is a frontal, $(t,f_t)$ is also frontal, since the image of $(t,f_t)$ is embedded within the image of $(t,F_t)$.
	Conversely, given a frontal unfolding $(t,f_t)$ of $f$, the map $(t,u,f_t)$ is a frontal unfolding of $F$ with $f_t=i^*(\pi\circ F_t)$, hence $\beta_0(\ms{F}(F))=\ms{F}(f)$.
\end{proof}

As a consequence of Lemma \ref{stable unfolding lemma}, if $f\colon (\mb{K}^n,0) \to (\mb{K}^{n+1},0)$ is a stable frontal map germ, it is either the versal unfolding of some frontal map germ of rank $0$ or a prism (i.e. a trivial unfolding) thereof.

\begin{theorem}\label{bounds for KF}
	Let $\gamma\colon (\mb{K},0) \to (\mb{K}^2,0)$ be the plane curve from Remark \ref{Piuseux theorem}, and
	\begin{align*}
		T_j(t)=(t^j,B_j(t)), && B_j(t)=j\int^t_0s^{j-1}\mu(s)\,ds.
	\end{align*}
	If $\ms{F}_0(\gamma)=\ms{F}(\gamma)\cap \mf{m}_1\theta(\gamma)$, then
		\[\Sp_{\mb{K}}\{T_1,\dots,T_{\alpha-2}\} \hookrightarrow \frac{\ms{F}_0(\gamma)}{T\ms{K}_{\ms{F}e}\gamma} \hookrightarrow \Sp_{\mb{K}}\{T_1,\dots,T_{\alpha-2},(0,t^\alpha),\dots,(0,t^{2\alpha-1})\}.\]
\end{theorem}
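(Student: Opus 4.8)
The plan is to work throughout with the explicit model $\gamma(t)=(t^\alpha,t^{\alpha+1}g(t))$ supplied by Remark \ref{Piuseux theorem}, where $\alpha=\mult(\gamma)$ and $\mu=q'/p'\in\ms{O}_1$ is well-defined (since $\gamma$ is a proper frontal), so that $\ord\mu\ge 1$. Specialising Remark \ref{corank 1 infinitesimal integral deformation} to $n=1$, a pair $\xi=(\xi_1,\xi_2)\in\theta(\gamma)$ lies in $\ms{F}(\gamma)$ exactly when $\xi_2'-\mu\xi_1'\in\ms{O}_1\{p'\}=t^{\alpha-1}\ms{O}_1$, and $\ms{F}_0(\gamma)=\{\xi\in\ms{F}(\gamma):\xi_1(0)=\xi_2(0)=0\}$. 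I will also record the two pieces of $T\ms{K}_{\ms{F}e}\gamma=t\gamma(\theta_1)+\mf{m}_2\ms{F}(\gamma)$: on one hand $t\gamma(\theta_1)=\ms{O}_1\cdot(t^{\alpha-1},\mu t^{\alpha-1})$ (using $q'=\mu p'$ and $p'=\alpha t^{\alpha-1}$); on the other, since $\gamma^*X=t^\alpha\in\gamma^*\mf{m}_2$, the module $\mf{m}_2\ms{F}(\gamma)$ contains $t^\alpha\ms{F}(\gamma)$, and every element of $\mf{m}_2\ms{F}(\gamma)$ has first component in $t^\alpha\ms{O}_1$. Finally I note that $(0,t^\alpha\psi)\in\ms{F}(\gamma)$ for every $\psi\in\ms{O}_1$, since the defining condition then reads $\alpha t^{\alpha-1}\psi+t^\alpha\psi'\in t^{\alpha-1}\ms{O}_1$, which is automatic.

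For the first embedding I check that $T_1,\dots,T_{\alpha-2}$ lie in $\ms{F}_0(\gamma)$ and are $\mb{K}$-linearly independent modulo $T\ms{K}_{\ms{F}e}\gamma$. Membership is immediate: $B_j'=jt^{j-1}\mu=\mu\,(t^j)'$, so the defining expression for $T_j=(t^j,B_j)$ vanishes identically, and $B_j(0)=0$, $t^j\in\mf{m}_1$. For independence, every element of $T\ms{K}_{\ms{F}e}\gamma$ has first component of order at least $\alpha-1$ by the previous paragraph, whereas a nonzero combination $\sum_{j=1}^{\alpha-2}c_jT_j$ has first component $\sum c_jt^j$ of order at most $\alpha-2$. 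Hence $\Sp_{\mb{K}}\{T_1,\dots,T_{\alpha-2}\}\cap T\ms{K}_{\ms{F}e}\gamma=0$, and the quotient map embeds this span into $\ms{F}_0(\gamma)/T\ms{K}_{\ms{F}e}\gamma$.

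For the second embedding it suffices to show that the $2\alpha-2$ elements $T_1,\dots,T_{\alpha-2},(0,t^\alpha),\dots,(0,t^{2\alpha-1})$ generate $\ms{F}_0(\gamma)$ modulo $T\ms{K}_{\ms{F}e}\gamma$; then $\dim_{\mb{K}}\ms{F}_0(\gamma)/T\ms{K}_{\ms{F}e}\gamma\le 2\alpha-2$ and the quotient embeds into the $(2\alpha-2)$-dimensional space on the right. Given $\xi=(\xi_1,\xi_2)\in\ms{F}_0(\gamma)$, write $\xi_1=\sum_{j\ge1}c_jt^j$ and subtract $\sum_{j=1}^{\alpha-2}c_jT_j$, so that the first component becomes $t^{\alpha-1}h$ for some $h\in\ms{O}_1$; then subtract $(t^{\alpha-1}h,\mu t^{\alpha-1}h)\in t\gamma(\theta_1)$, reducing the first component to $0$. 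The result is of the form $(0,\eta_2)\in\ms{F}_0(\gamma)$, so $\eta_2'\in t^{\alpha-1}\ms{O}_1$ and $\eta_2(0)=0$, whence $\eta_2=\int_0^t\eta_2'\in t^\alpha\ms{O}_1$. Writing $\eta_2=\sum_{m=\alpha}^{2\alpha-1}d_mt^m+t^{2\alpha}\psi$ with $\psi\in\ms{O}_1$, the tail satisfies $(0,t^{2\alpha}\psi)=t^\alpha\cdot(0,t^\alpha\psi)\in t^\alpha\ms{F}(\gamma)\subseteq\mf{m}_2\ms{F}(\gamma)\subseteq T\ms{K}_{\ms{F}e}\gamma$ by the first paragraph. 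Thus $\xi\equiv\sum_{j=1}^{\alpha-2}c_jT_j+\sum_{m=\alpha}^{2\alpha-1}d_m(0,t^m)$ modulo $T\ms{K}_{\ms{F}e}\gamma$, as required.

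The computations are elementary order-of-vanishing bookkeeping; the points needing care are (i) pinning down the correct sign in the frontal condition so that precisely the $T_j$ — rather than some variant — appear as the source-direction generators, and (ii) the two facts $t^\alpha\ms{F}(\gamma)\subseteq\mf{m}_2\ms{F}(\gamma)$ and $(0,t^\alpha\psi)\in\ms{F}(\gamma)$ for all $\psi$, which are exactly what allows the degree-$\ge2\alpha$ tail of $\eta_2$ to be absorbed into $T\ms{K}_{\ms{F}e}\gamma$; without them the upper bound would be weaker. I do not anticipate a genuine obstacle: the two ``gap'' ranges, $1\le j\le\alpha-2$ in the source direction and $\alpha\le m\le 2\alpha-1$ in the fibre direction, are exactly the monomials unreachable by $t\gamma(\theta_1)$ and by $\mf{m}_2\ms{F}(\gamma)$ respectively, and the sandwich reflects that one does not control a priori how many of the classes $(0,t^m)$ actually survive in the quotient.
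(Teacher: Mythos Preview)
Your argument is correct and follows essentially the same route as the paper: both identify $\ms{F}(\gamma)$ via the condition $b'-\mu a'\in\mf{m}_1^{\alpha-1}$, establish the lower bound by the order-of-first-component argument, and obtain the upper bound by reducing modulo $t\gamma(\theta_1)$ and $\mf{m}_2\ms{F}(\gamma)$. The only noteworthy difference is that for the tail you dispose of $(0,t^{2\alpha}\psi)$ directly as $t^\alpha\cdot(0,t^\alpha\psi)\in\gamma^*\mf{m}_2\cdot\ms{F}(\gamma)$, whereas the paper first invokes finite determinacy of $\gamma$ to place $\mf{m}_1^{k+1}\theta(\gamma)$ inside $T\ms{K}_{\ms{F}e}\gamma$ and then treats the finitely many intermediate monomials one by one; your shortcut is a mild streamlining but not a different idea.
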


\begin{proof}
	Let $\xi=(a,b) \in \theta(\gamma)$: by Remark \ref{corank 1 infinitesimal integral deformation}, $\xi \in \ms{F}(\gamma)$ if and only if $b'-\mu a' \in \mf{m}_1^{\alpha-1}$, which in turn is equivalent to assuming that $b'-\mu a' \equiv \lambda_1 T'_1+\dots+\lambda_{\alpha-2}T'_{\alpha-2} \mod \mf{m}_1^{\alpha-1}$ for some $\lambda_1,\dots,\lambda_{\alpha-2}\in \mb{K}$.
	Therefore,
	\begin{equation}\label{F gamma decomposition}
		\ms{F}(\gamma)=\mb{K}\oplus\Sp_{\mb{K}}\left\{T_1,\dots,T_{\alpha-1}\right\}\oplus\mf{m}_1^\alpha\theta(\gamma).
	\end{equation}

	A simple computation shows that $T\ms{K}_{\ms{F}e}\gamma \subseteq \mf{m}^{\alpha-1}_1\theta(\gamma)$, hence $T_j \not\in T\ms{K}_{\ms{F}e}\gamma$ for $j < \alpha-1$.
	However, $T_{\alpha-1} \in t\gamma(\theta_1)$, giving the first monomorphism.
	For the second monomorphism, first note that $\gamma$ is finitely determined, so there exists a $k > 0$ such that $\mf{m}_1^{k+1}\theta(\gamma) \subseteq T\ms{A}_e\gamma \subseteq T\ms{K}_{\ms{F}e}\gamma$.
	If $j=\alpha,\dots,k$, there exist $l > 0$ and $0 \leq \beta < \alpha$ such that $j=l\alpha+\beta$.
	Using Equation \eqref{F gamma decomposition}, we see that
		\[(t^j,0)=(t^\alpha)^l(t^\beta,0)=(t^\alpha)^lT_\beta(t)+(t^\alpha)^l(0,B_\beta(t)) \in \mf{m}_2\ms{F}(\gamma) \subseteq T\ms{K}_{\ms{F}e}\gamma.\]
	Similarly, $(0,t^j) \in T\ms{K}_{\ms{F}e}\gamma$ for all $j \geq 2\alpha$.
\end{proof}

If we now consider the $1$-parameter unfolding $\Gamma_j(u,t)=(u,\gamma(t)+uT_j(t))$,
	\[\cvf{t}(t^{\alpha+1}h(t)+uB_j(t))=\mu(t)\cvf{t}(t^\alpha+jut^j)\]
and $\Gamma_j$ is frontal due to Corollary \ref{pq criterion frontality}.
Similarly, if we set $\Gamma_k(u,t)=(u,\gamma(t)+ut^\alpha k(t))$ with $k \in \ms{O}_1^2$,
\begin{align*}
	Q_t(u,t)=\cvf{t}(t^{\alpha+1}h(t)+ut^\alpha k_2(t))	&=t^{\alpha-1}(\alpha\mu(t)+uk_2(t)+utk'_2(t));\\
	P_t(u,t)=\cvf{t}(t^{\alpha}+ut^\alpha k_1(t))		&=t^{\alpha-1}(\alpha+\alpha uk_1(t)+tk_1'(t)).
\end{align*}
Since $\alpha+\alpha uk_1(t)+tk_1'(t)$ is a unit, $P_t\,|\, Q_t$ and $\Gamma_k$ is also frontal.

If $\ms{F}_0(\gamma)=T\ms{K}_{\ms{F}e}\gamma+\Sp_{\mb{K}}\{T_{j_1},\dots,T_{j_d},k_1,\dots,k_b\}$ for some $k_1,\dots,k_b \in \mf{m}^\alpha_1\ms{O}_1^2$, we consider the $(d+b)$-parameter frontal unfolding
\begin{equation}\label{frontal sum KF}
	F(u,t)=\Gamma_{j_1}(u_1,t)\#\dots\#\Gamma_{j_d}(u_d,t)\#\Gamma_{k_1}(u_{d+1},t)\#\dots\#\Gamma_{k_b}(u_{d+b},t),
\end{equation}
where $\#$ denotes the frontal sum operation defined on Equation \eqref{frontal sum}.

\begin{example}\label{generating a stable unfolding of the E8}
	Let $f\colon (\mb{K},0) \to (\mb{K}^2,0)$ be the plane curve $f(t)=(t^3,t^5)$, which verifies that $\ms{F}_0(f)=T\ms{K}_{\ms{F}e}f\oplus \Sp_{\mb{K}}\{(9t,5t^3),(0,t^4)\}$.
	We then consider the $1$-parameter unfoldings
	\begin{align*}
		F_1(t,v)=(v,t^3,t^5+vt^4); && F_2(t,u)=(u,t^3+9ut,t^5+5ut^3),
	\end{align*}
	whose frontal sum is
		\[F(t,u,v)=\left(u,v,t^3+9ut,t^5+5ut^3+\frac{1}{3}vt^4+6uvt^2\right).\]
	This unfolding is $\ms{A}$-equivalent to the $A_{3,1}$ singularity from \cite{Ishikawa05}, Example 4.2.
\end{example}

\begin{theorem}\label{existence of minimal stable unfolding}
	The map germ $F\colon (\mb{K}^d\times\mb{K}^b\times \mb{K},0) \to (\mb{K}^d\times\mb{K}^b\times \mb{K}^2,0)$ defined on Equation \eqref{frontal sum KF} is stable as a frontal.
	Moreover, if the $\mb{K}$-codimension of $T\ms{K}_{\ms{F}_e}f$ over $\ms{F}_0(f)$ is $d+b$, every other stable frontal unfolding of $f$ must have at least $d+b$ parameters.
\end{theorem}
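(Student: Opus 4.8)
The plan is to transfer the whole problem to the $\ms{K}_{\ms{F}e}$-tangent space of the generic slice $\gamma$ via Lemma \ref{stable unfolding lemma}, just as Mather's generation theorem for smooth germs rests on the $\ms{K}_e$-tangent space. First I would check that $F$ is stable as a frontal. By construction $F$ is a $(d+b)$-parameter frontal unfolding of the rank-$0$ germ $f$ (a plane curve singularity), obtained by iterated frontal sums of the one-parameter frontal unfoldings $\Gamma_{j_i}$ and $\Gamma_{k_i}$; we have already checked above that each summand is frontal (using Corollary \ref{pq criterion frontality}), and that frontal sums of corank-$\le 1$ frontals are frontal. By Corollary \ref{stable if and only if codimension 0} it then suffices to show $\ms{F}(F)=T\ms{A}_eF$, which by Theorem \ref{frontal versality}(2) and Lemma \ref{KodairaSpencer} will follow once we show the infinitesimal generators $\dot F_1,\dots,\dot F_{d+b}$ of the unfolding span $\ms{F}(f)$ modulo $T\ms{A}_ef$, or more precisely modulo $T\ms{K}_{\ms{F}e}f$ after invoking the isomorphism $\beta$ of Lemma \ref{stable unfolding lemma}. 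Concretely: $\beta_0(\p u_i)=-\dot F_i$, and $\dot F_i$ is (up to the embedding) exactly $T_{j_i}$ or $k_i$; by hypothesis $\{T_{j_1},\dots,T_{j_d},k_1,\dots,k_b\}$ maps to a $\mb{K}$-basis of $\ms{F}_0(f)/T\ms{K}_{\ms{F}e}f$, and since $\ms{F}(f)=\mb{K}\oplus\ms{F}_0(f)$ with the constant absorbed into $tf(\theta_1)$ (as in the displayed decomposition \eqref{F gamma decomposition} and the remark that $T_{\alpha-1}\in t\gamma(\theta_1)$), these classes together with $tf(\theta_1)$ generate $\ms{F}(f)$. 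Pushing this through the isomorphism $\beta$ gives $\ms{F}(F)=T\ms{K}_{\ms{F}e}F$, i.e. $\codim_{\ms{K}_{\ms{F}e}}F=0$; combined with the fact that $F$ has rank $n$ in the source (it is an unfolding, so submersive in the $u$-directions) a Mather-style argument — $\ms{F}(F)=T\ms{K}_{\ms{F}e}F$ together with the unfolding structure forces $\ms{F}(F)=T\ms{A}_eF$ — yields $\ms{F}$-stability. The cleanest way to phrase this last implication is: $F$ being a frontal unfolding of $f$ with $\dot F_i$ spanning $\ms{F}(f)$ mod $T\ms{A}_ef$ means $F$ is $\ms{F}$-versal by Theorem \ref{frontal versality}(2), and a versal unfolding of a rank-$0$ germ is stable (this is exactly the content, via Lemma \ref{frontal and integral versality} and Theorem \ref{integral versality}, of stability of the versal object).

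For the minimality statement, suppose $G=(w,f_w)$ is any stable frontal unfolding of $f$ with $m$ parameters. Stability gives $\ms{F}(G)=T\ms{A}_eG$, hence $G$ is $\ms{F}$-versal, hence by Theorem \ref{frontal versality}(2)
\[
\ms{F}(f)=T\ms{A}_ef+\Sp_{\mb{K}}\{\dot G_1,\dots,\dot G_m\}.
\]
Reducing modulo $T\ms{K}_{\ms{F}e}f\supseteq T\ms{A}_ef$ (the inclusion being $T\ms{A}_ef\subseteq tf(\theta_1)+\mf{m}_2\ms{F}(f)=T\ms{K}_{\ms{F}e}f$, using $f^*\mf m_2\subseteq\mf m_1$) and intersecting with $\ms{F}_0(f)$, the classes $\dot G_1,\dots,\dot G_m$ must span $\ms{F}_0(f)/T\ms{K}_{\ms{F}e}f$, whose $\mb{K}$-dimension is $d+b$ by hypothesis. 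Therefore $m\ge d+b$.

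The main obstacle I anticipate is the passage "$\ms{F}(F)=T\ms{K}_{\ms{F}e}F \Rightarrow \ms{F}(F)=T\ms{A}_eF$", i.e. making the Mather-style unfolding argument rigorous in the frontal category. In the smooth theory this is a geometric criterion (rank-$0$ germ plus $\ms{K}_e$-versal unfolding is $\ms{A}$-stable); here one must verify that the analogue goes through after the identifications already established in the paper. The safest route is to avoid re-proving it from scratch and instead route everything through versality: show $\dot F_i$ span $\ms{F}(f)$ mod $T\ms{A}_ef$ (a direct computation with the explicit $T_j$ and $k_i$, plus the decomposition \eqref{F gamma decomposition} and Theorem \ref{bounds for KF}), conclude $\ms{F}$-versality by Theorem \ref{frontal versality}(2), and then observe that a frontal unfolding which is versal and whose central germ $f$ has rank $0$ must be stable — because any frontal deformation of $F$ is, by versality, induced from $F$, and the unfolding parameters already account for all of $\ms{F}(f)/T\ms{A}_ef$, so no new moduli appear. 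The bookkeeping in the iterated frontal sum — checking that the $\dot F_i$ of the sum are exactly the $T_{j_i}$ and $k_i$ and that no cross-terms spoil the spanning property — is routine given the formula \eqref{frontal sum} and Remark \ref{F(f) is linear}, but should be stated carefully.
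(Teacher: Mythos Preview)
Your approach is essentially the paper's. The paper also reduces everything through Lemma~\ref{stable unfolding lemma} and the computation that $\dot F_i$ equals $T_{j_i}$ (for $i\le d$) or $k_{i-d}$ (for $i>d$), obtained directly from the frontal-sum formula~\eqref{frontal sum}. The paper's proof compresses your worried step into one line: from the inclusions
\[
\ms{F}_0(F)\supseteq T\ms{K}_{\ms{F}e}F\supseteq T\ms{A}_eF\cap \mf{m}_{d+b+1}\theta(F)
\]
it reads off that $\ms{F}$-stability of $F$ is equivalent to $\ms{F}_0(F)=T\ms{K}_{\ms{F}e}F$, and then Lemma~\ref{stable unfolding lemma} turns this into $\ms{F}_0(f)=T\ms{K}_{\ms{F}e}f+\Sp_{\mb{K}}\{\dot F_1,\dots,\dot F_{d+b}\}$, which holds by construction. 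The hidden engine behind that one line is exactly Lemma~\ref{KodairaSpencer}: for the rank-$0$ slice $f$ one has $T\ms{K}_{\ms{F}e}f\subseteq\ms{F}_0(f)$, so $\ms{F}(f)/T\ms{K}_{\ms{F}e}f\cong\mb{K}^2\oplus\ms{F}_0(f)/T\ms{K}_{\ms{F}e}f$; under the isomorphism $\beta$ the constants $\partial X,\partial Y,\partial u_j$ (all in the image of $\overline{\omega F}$) cover the $\mb{K}^2$ summand and the $\dot F_j$, so surjectivity of $\overline{\omega F}$ is precisely the spanning condition on the $\dot F_i$. That is the ``Mather-style'' step you flagged, and it does not require the detour through versality.

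Your proposed alternative via Theorem~\ref{frontal versality}(2) is not wrong, but note that it asks for $\dot F_i$ to span $\ms{F}(f)/T\ms{A}_ef$, which is a different quotient from $\ms{F}_0(f)/T\ms{K}_{\ms{F}e}f$; you would have to argue separately that the two spanning conditions coincide for a rank-$0$ slice (they do, but this is again essentially the Kodaira--Spencer argument). Staying at the $T\ms{K}_{\ms{F}e}$ level throughout, as the paper does, is more economical. Your argument for the minimality clause is correct and is a genuine addition: the paper's written proof stops at ``and thus $F$ is stable'' and does not spell out the ``Moreover'' part.
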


\begin{proof}
	It is clear by definition of $T\ms{K}_{\ms{F}e}F$ that
		\[\ms{F}_0(F)\supseteq T\ms{K}_{\ms{F}e}F\supseteq T\ms{A}_eF\cap \mf{m}_{d+b+1}\theta(F),\]
	so $F$ is $\ms{F}$-stable if and only if $\ms{F}_0(F)=T\ms{K}_{\ms{F}e}F$.
	By Lemma \ref{stable unfolding lemma}, this is equivalent to
	\begin{equation*}
		\ms{F}_0(f)=T\ms{K}_{\ms{F}e}f+\Sp_{\mb{K}}\left\{-\dot F_1, \dots,-\dot F_{d+b}\right\}.
	\end{equation*}
	It follows from the definition of frontal sum that
		\[\dot F_i(t)=(P_{u_i}(0,t),Q_{u_i}(0,t))=
		\begin{cases}
			\dot \Gamma_{j_i}(t) 	& \text{ if } i\leq d;\\
			\dot \Gamma_{k_{i}}(t) 	& \text{ if } i > d,
		\end{cases}\]
	and thus $F$ is stable.
\end{proof}

\subsection{Corank $1$ stable frontal map germs in dimension $3$}\label{classification}
By Theorem \ref{stability multigerm}, a frontal multigerm $f\colon (\mb{K}^3,S) \to (\mb{K}^4,0)$ is $\ms{F}$-stable if and only if its branches $f_1,\dots,f_r$ are $\ms{F}$-stable and $\tau(f_1),\dots,\tau(f_r)$ meet in general position.
Therefore, we only need to classify the stable monogerms.

By Lemma \ref{stable unfolding lemma}, every $\ms{F}$-stable monogerm with corank $1$ is a versal unfolding of an irreducible analytic plane curve $\gamma$ with $\ms{F}_e$-codimension at most $2$.
In particular, if $\gamma(\mb{C},0)$ is the zero locus of some analytic $g \in \ms{O}_2$, $\tau(g)-\delta(g)\leq \ord(g)+1$ due to Corollary \ref{formula for frontal codimension}.
A consequence of Theorem \ref{bounds for KF} is that $\codim_{\ms{K}_{\ms{F}e}}\gamma \geq \ord(g)$, meaning that $\ord(g)$ must be at most $4$.

If $\ord(g)=2$, it follows from a result by Zariski \cite{Zariski} that $g(x,y)=x^2-y^{2n+1}$.
For $n=0,1$, this yields an $\ms{F}$-stable plane curve; for $n > 1$, we can unfold $\gamma(t)$ into
	\[\Gamma_n(u,t)=(u,t^2,t^{2n+1}+ut^3),\]
which is stable.

The cases $\ord(g)=3$ and $\ord(g)=4$ will be examined using Hefez and Hernandes' classification of analytic plane curves from \cite{HefHer_NFT4}.
Every analytic plane curve has an associated invariant $\Sigma=\la v_0,\dots, v_g\ra$, known as the \textbf{semigroup of values}.
If the curve is irreducible, its delta invariant $\delta$ is equal to
	\[\frac{1}{2}\left[1-v_0-\sum^g_{i=1}v_i\left(1-\frac{\GCD(v_0,\dots,v_{i-1})}{\GCD(v_0,\dots,v_i)}\right)\right],\]
regardless of its analytic family.
Therefore, the expression $\tau-\delta$ only depends on $\tau$.

For $\ord(g)=3$, $\Sigma$ is given by $\la 3, v_1\ra$ with $v_1 > 3$, so $\delta=v_1-1$.
If $\tau=2(v_1-1)$, $\tau-\delta=v_1-1 < 4$, so $g(x,y)$ is either $x^3-y^4$ or $x^3-y^5$.
The case $\tau=2v_1-j-1$ with $j \geq 2$ implies that $\tau < \delta$, which is impossible.

For $\ord(g)=4$, $\Sigma$ can be either $\la 4,v_1\ra$ or $\la 4,v_1,v_2\ra$.
If $\Sigma=\la 4,v_1\ra$, $v_1$ is coprime with $4$, so $\delta=3/2(v_1-1)$ and we have two possible values for $\tau$:
\begin{enumerate}
	\item if $\tau=3(v_1-1)$, $\tau-\delta=3/2(v_1-1) \leq 5$, which implies that $\tau < \delta$;
	\item if $\tau=3v_1-j-2$ with $j > 1$,
		\[\tau-\delta=\frac{1}{2}(3v_1-2j-1) \leq 5 \implies j \geq \frac{1}{2}(3v_1-11).\]
	Since $j \leq v_1/2$, it follows that $v_1 \geq 3v_1-11$, giving us $\gamma(t)=(t^4,t^5+t^7)$.
\end{enumerate}
If $\Sigma=\la 4,v_1,v_2\ra$, $\GCD(4,v_1)=2$ and $\GCD(4,v_1,v_2)=1$, which implies that $v_1 \geq 6$ and $v_2 \geq 2v_1$.
Using
\begin{align*}
	\delta=\frac{1}{2}(v_2+v_1-3); && \tau=v_2+\frac{1}{2}v_1-2,
\end{align*}
it follows that $\tau-\delta=(v_2-1)/2 > 5$.
Since we are only interested in the case $\tau-\delta \leq 5$, we can ignore this case.

For the remaining cases, the possible values for $\tau-\delta$ fall into one the following categories:
\begin{align*}
	\frac{3(v_1-1)}{2}+k-\left[\frac{v_1}{4}\right]; && \frac{3(v_1-1)}{2}-2j+1; && \frac{3(v_1-1)}{2}-2j+2,
\end{align*}
for $2 \leq j \leq [v_1/4]$ and $1 \leq k \leq [v_1/4]-j$.
If $\tau-\delta \leq 5$, then $v_1 \geq 7$, which is not possible.

\begin{table}
\centering
\begin{tabular}{LCL}
	\text{Plane curve}	& \multicolumn{2}{c}{Versal frontal unfolding} 						\\
	\hline\hline
	(t^2,t^3) 		\M	& A_{2,0}	& (u,v,t^2,t^3)											\\
	(t^2,t^5)       \M  & A_{2,1}	& (u,v,t^2,t^5+ut^3)									\\
	(t^3,t^4)       \M  & A_{3,0}	& (u,v,t^3+3ut, 3t^4+2ut^2)								\\
	(t^3,t^5) 	    \M  & A_{3,1} 	& (u,v,t^3+tu, t^5+vt^4+2uvt^2-5u^2t)					\\
	(t^4,t^5+t^7) 	\M  & A_{4,0} 	& (u,v,t^4+8tu,t^7+t^5+t^3v(5-14v)+t^2u(5-42v)-28tu^2)	\\
\end{tabular}
	\caption{\label{tabla estables} Stable proper frontal map germs $(\mb{C}^3,0) \to (\mb{C}^4,0)$.
	The notation $A_{i,j}$ is due to Ishikawa \cite{Ishikawa05}.}
\end{table}

\begin{theorem}
	Table \ref{tabla estables} shows all stable proper frontal map germs $(\mb{C}^3,0) \to (\mb{C}^4,0)$ of corank $1$ together with the plane curves of which they are versal unfoldings.
	All stable frontal multigerms are obtained by transverse self-intersections of these mono-germs, as shown in Theorem \ref{stability multigerm}.
\end{theorem}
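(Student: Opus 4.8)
The plan is to reduce the multigerm classification to the monogerm case via Theorem \ref{stability multigerm}, and then enumerate the $\ms{F}$-stable corank $1$ monogerms $(\mb{C}^3,0)\to(\mb{C}^4,0)$ using the structural results of \S\ref{stable unfoldings}. By Lemma \ref{stable unfolding lemma}, every $\ms{F}$-stable corank $1$ monogerm of this dimension is (up to $\ms{A}$-equivalence) a versal frontal unfolding of an irreducible analytic plane curve $\gamma$, because a rank $0$ corank $1$ frontal germ $(\mb{C}^n,0)\to(\mb{C}^{n+1},0)$ with $n\le 1$ is just a plane curve, and a stable frontal in dimension $3$ is a versal unfolding of such a germ (or a prism thereof, but a prism of a stable germ in dimension $<3$ which is itself the versal unfolding of a curve). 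The number of unfolding parameters here is $3-1=2$, so the curve $\gamma$ must satisfy $\codim_{\ms{F}_e}\gamma\le 2$. By Theorem \ref{frontal versality}(1) this is exactly the condition for a $2$-parameter versal frontal unfolding to exist, and by Corollary \ref{stable if and only if codimension 0} the resulting unfolding is $\ms{F}$-stable.

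The next step is to bound $\ord(g)$, where $g\in\ms{O}_2$ cuts out the image of $\gamma$. From Corollary \ref{formula for frontal codimension}, $\codim_{\ms{F}_e}\gamma=\tau(g)-\ord(g)-\tfrac12\mu(g)+1$, and from Theorem \ref{bounds for KF} one reads off that $\codim_{\ms{K}_{\ms{F}e}}\gamma\ge\alpha-2$ with (via the first monomorphism and the fact that $\ms{F}_0\subseteq\ms{F}$) a lower bound forcing $\ord(g)=\alpha\le 4$; more precisely, since the versal frontal unfolding has $\le 2$ parameters and the $T_j$ for $1\le j\le\alpha-2$ contribute independently, one needs $\alpha-2\le 2$, hence $\alpha\le 4$. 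Then I would run through the cases $\alpha=2,3,4$ exactly as sketched in the excerpt: for $\alpha=2$, Zariski's result gives $g=x^2-y^{2n+1}$, and $\codim_{\ms{F}_e}\le 2$ together with the computation $\codim_{\ms{F}_e}(A_{2n})=n-1$ leaves $n\in\{1,2,3\}$, i.e. $(t^2,t^3)$, $(t^2,t^5)$, $(t^2,t^7)$ — wait, one must double check: the list in Table \ref{tabla estables} only has $(t^2,t^3)$ and $(t^2,t^5)$, so in fact the constraint must be $\codim_{\ms{F}_e}\le 1$ for these, which is consistent because $\codim_{\ms{K}_{\ms{F}e}}$ (not $\codim_{\ms{F}_e}$) is what must be $\le 2$, and for $A_{2n}$ one has $\codim_{\ms{K}_{\ms{F}e}}=n$; hence $n\le 2$. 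For $\alpha=3$, the semigroup is $\la 3,v_1\ra$ and the bound $\tau-\delta\le\alpha+1=4$ forces $g\in\{x^3-y^4,\ x^3-y^5\}$; for $\alpha=4$, the case analysis on $\Sigma=\la4,v_1\ra$ or $\la4,v_1,v_2\ra$ (using the Hefez–Hernandes normal forms and the delta formula) leaves only $\gamma(t)=(t^4,t^5+t^7)$.

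The final step is to verify, for each surviving curve, that the frontal reduction of its $\ms{A}_e$-miniversal unfolding — equivalently, the stable frontal unfolding produced by Theorem \ref{existence of minimal stable unfolding} via the frontal sum \eqref{frontal sum KF} — has exactly $2$ parameters and coincides (up to $\ms{A}$-equivalence) with the germ listed in Table \ref{tabla estables}; this is a direct computation using Equation \eqref{frontal condition unfolding} and the formulae for $\Gamma_j$, $\Gamma_k$, carried out case by case just as in the worked examples (e.g. the $E_8$ computation and Example \ref{generating a stable unfolding of the E8}). Finally, the multigerm statement follows immediately: by Theorem \ref{stability multigerm} an $\ms{F}$-stable multigerm is exactly a collection of these monogerms whose $\tau(f_i)$ meet in general position, and by Proposition \ref{hat tau is the lift at 0} these $\tau(f_i)$ are the tangent spaces at $0$ of the isosingular loci, so "general position" is precisely transversality of the self-intersection branches. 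The main obstacle I expect is the $\alpha=4$ case: correctly ruling out all infinite subfamilies of $\la4,v_1\ra$ and $\la4,v_1,v_2\ra$ requires careful bookkeeping of $\tau$, $\delta$ and the three families of possible values of $\tau-\delta$ coming from the Hefez–Hernandes classification, and one must be sure the single surviving curve $(t^4,t^5+t^7)$ genuinely has $\codim_{\ms{K}_{\ms{F}e}}\le 2$ and is not, say, merely $\ms{F}_e$-codimension $2$ with a larger $\ms{K}_{\ms{F}e}$-codimension that would need more unfolding parameters.
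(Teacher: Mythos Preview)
Your overall strategy matches the paper's: reduce to monogerms via Theorem~\ref{stability multigerm}, observe that a stable corank-$1$ monogerm in dimension $3$ is (a prism of) a versal frontal unfolding of a plane curve with $\codim_{\ms{F}_e}\le 2$, bound the multiplicity $\alpha\le 4$, and run the Hefez--Hernandes case analysis for $\alpha=3,4$.

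However, your handling of the $\alpha=2$ case contains a genuine error. The curve $(t^2,t^7)$ \emph{does} have $\codim_{\ms{F}_e}=2$ and \emph{is} on the candidate list---the paper's own proof lists six curves, including this one. Your attempted fix, switching the governing constraint to $\codim_{\ms{K}_{\ms{F}e}}\le 2$ and asserting $\codim_{\ms{K}_{\ms{F}e}}(A_{2n})=n$, is incorrect: for $\alpha=2$ the upper bound in Theorem~\ref{bounds for KF} is $\Sp_{\mb K}\{(0,t^2),(0,t^3)\}$, so $\dim\ms{F}_0(\gamma)/T\ms{K}_{\ms{F}e}\gamma\le 2$ for \emph{every} $A_{2n}$, and this quantity does not grow with $n$. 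The correct resolution is the one the paper actually uses: for every $k>1$ the one-parameter frontal unfolding $(s,t)\mapsto(s,t^2,t^{2k+1}+st^3)$ is already $\ms{A}$-equivalent to the folded Whitney umbrella $(s,t)\mapsto(s,t^2,st^3)$, which is $\ms{F}$-stable. Hence the stable $3$-dimensional unfolding of $(t^2,t^7)$ is $\ms{A}$-equivalent to the prism of the folded umbrella, i.e.\ to the entry $A_{2,1}$ already recorded for $(t^2,t^5)$; there is no separate row. You need this collapsing argument (an $\ms{A}$-equivalence between unfoldings of distinct curves), not a sharper numerical cutoff.

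Your closing worry about $(t^4,t^5+t^7)$ is unfounded: once the case analysis yields $\codim_{\ms{F}_e}\gamma=2$, Theorem~\ref{existence of frontal reduction} produces a two-parameter $\ms{F}$-versal (hence $\ms{F}$-stable) unfolding directly, and no separate control on $\codim_{\ms{K}_{\ms{F}e}}$ is needed.
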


\begin{proof}
	The discussion conducted throughout this subsection shows that the only plane curves of frontal codimension less than or equal to 2 are $(t^2,t^3)$, $(t^2,t^5)$, $(t^2,t^7)$, $(t^3,t^4)$, $(t^3,t^5)$ and $(t^4,t^5+t^7)$.
	
	The curve $(t^2,t^3)$ is easily checked to be stable as a frontal.
	The family of curves $(t^2,t^{2k+1})$ for $k > 1$ unfolds into $(s,t) \mapsto (s,t^2,t^{2k+1}+st^3)$, which is $\ms{A}$-equivalent to the folded Whitney umbrella $(s,t) \mapsto (s,t^2,st^3)$,which is stable as a frontal (\cite{FrontalSurfaces,Nuno_CuspsAndNodes}).

	The curves $(t^3,t^4)$ and $(t^4,t^5+t^7)$ unfold into the swallowtail and butterfly singularities ($A_{3,0}$ and $A_{4,0}$ in Table \ref{tabla estables}), both of which are stable wave fronts (\cite{Arnold_I}).
	The $E_8$ singularity unfolds into Ishikawa's $A_{3,1}$ singularity \cite{Ishikawa05}.
\end{proof}

\begin{conjecture}
	The stable proper frontal map germs $f\colon (\mb{C}^n,0) \to (\mb{C}^{n+1},0)$ of corank $1$ are given by Ishikawa's $A_{i,j}$ singularities, where
		\begin{align*}
			i=\dim \frac{\tilde f^*\ms{O}_{2n+1}}{f^*\mf{m}_{n+1}} \in \{2,\dots,n\}; && j+1=\dim \frac{\ms{O}_{n}}{\tilde f^*\mf{m}_{2n+1}} \in \left\{1,\dots,\left[\frac{n}{2}\right]\right\},
		\end{align*}
	where square brackets denote the floor function.
	All stable frontal multigerms are obtained by transverse self-intersections of these mono-germs, as shown in Theorem \ref{stability multigerm}.
\end{conjecture}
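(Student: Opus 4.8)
The plan is to descend Ishikawa's classification of Legendre-stable corank $1$ integral germs to the frontal level through the Nash lift. By Corollary~\ref{trivial Legendrian deformation} a proper corank $1$ frontal $f$ is $\ms{F}$-stable exactly when its Nash lift $\tilde f$ is Legendre stable, and Corollary~\ref{A-equivalence is compatible with frontals} makes $f\mapsto\tilde f$ a bijection between $\ms{A}$-classes of frontals and Legendre classes of their lifts. Ishikawa presents every Legendre-stable germ of corank $1$ as an open Whitney umbrella, so it suffices to single out those umbrellas that occur as Nash lifts of \emph{proper} corank $1$ frontals and to read off their two indices. Concretely, Lemma~\ref{stable unfolding lemma} and its corollary reduce every $\ms{F}$-stable germ $(\mb{C}^n,0)\to(\mb{C}^{n+1},0)$ to a prism over a frontal mini-versal unfolding of a rank $0$ frontal, i.e.\ of a plane-curve slice $\gamma$, while Theorem~\ref{existence of minimal stable unfolding} builds such stable unfoldings explicitly by the frontal-sum construction \eqref{frontal sum KF}. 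Thus I would parametrise the candidate list by the slice $\gamma$, normalised as in Remark~\ref{Piuseux theorem} to $\gamma(t)=(t^\alpha,t^{\alpha+1}g(t))$.

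Next I would pin down the two invariants. In prenormal coordinates $f=(x,p,q)$ with $q_y=\mu p_y$, the lift carries the fibre components $P_1,\dots,P_{n-1},\mu$ of \eqref{prenormal nash lift}, so $\tilde f^*\ms{O}_{2n+1}$ is the subalgebra of $\ms{O}_n$ generated by $f^*\ms{O}_{n+1}$ together with these components. I would show that $\dim_{\mb{C}}\tilde f^*\ms{O}_{2n+1}/f^*\mf{m}_{n+1}$ equals the minimal number of generators of $\tilde f^*\ms{O}_{2n+1}$ as an $f^*\ms{O}_{n+1}$-module, which for the slice is its multiplicity, identifying $i=\alpha$. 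Dually, $\tilde f^*\mf{m}_{2n+1}$ is generated by all components of $\tilde f$, and $\dim_{\mb{C}}\ms{O}_n/\tilde f^*\mf{m}_{2n+1}$ records the failure of $\tilde f$ to be immersive, yielding $j+1$ and matching $j=0$ with the wave-front case $\ord_y q=\ord_y p+1$ of \S2. Comparing these with Ishikawa's indices then names each umbrella as an $A_{i,j}$.

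The admissible range of $(i,j)$ for a fixed $n$ would come from dimension counting. The source dimension is $n=1+\codim_{\ms{K}_{\ms{F}e}}\gamma$ plus the number of prism directions, and Theorem~\ref{bounds for KF} confines $\codim_{\ms{K}_{\ms{F}e}}\gamma$ between $\alpha-2$ and $2\alpha-2$; feeding this into the requirement that the minimal stable unfolding fit in dimension $n$ bounds $\alpha$, hence $i$, and separately bounds the non-immersive index $j$, which is the content of the stated ranges. The multigerm clause is then immediate from Theorem~\ref{stability multigerm}: a multigerm is $\ms{F}$-stable iff its branches are among the listed mono-germs and the spaces $\tau(f_\ell)$ meet in general position, which by Proposition~\ref{hat tau is the lift at 0} is exactly transversality of the image branches.

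The genuine obstacle is the step from $n\le 3$, where Hefez and Hernandes' normal forms make the slice classification finite, to arbitrary $n$. For multiplicity $\alpha\ge4$ plane curves acquire continuous analytic moduli, so there is no finite list of slices to unfold, and one must prove that these moduli are invisible to frontal stability: any two slices of the same multiplicity and the same $j$-invariant should yield $\ms{A}$-equivalent stable frontal unfoldings, collapsing the moduli onto the discrete family $A_{i,j}$. Ishikawa's discreteness upstairs indicates that this holds, but a self-contained frontal argument must establish the $\ms{A}$-equivalence of the frontal sums \eqref{frontal sum KF} uniformly across the moduli and, in tandem, verify that each $A_{i,j}$ in the stated range projects to a proper frontal while no other type survives. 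Carrying out this uniform collapse of moduli, rather than any single invariant computation, is where I expect the difficulty to concentrate.
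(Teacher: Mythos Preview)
The statement you are attempting to prove is labelled a \emph{Conjecture} in the paper and is left open: the authors do not supply a proof, nor even a sketch. The preceding Theorem (Table~\ref{tabla estables}) settles only the case $n=3$ by an exhaustive search through Hefez--Hernandes normal forms, and the conjecture is then stated as the expected generalisation. So there is no ``paper's own proof'' to compare against.

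Your proposal is a plausible strategy, and you correctly identify the essential obstruction yourself: for multiplicity $\alpha\ge 4$ the analytic moduli of plane curves must be shown to collapse under frontal stable unfolding to the discrete $A_{i,j}$ list. This is exactly why the statement remains a conjecture. The reduction via Corollary~\ref{trivial Legendrian deformation}, Lemma~\ref{stable unfolding lemma}, and Theorem~\ref{existence of minimal stable unfolding} to classifying generic slices is sound, and the dimension count from Theorem~\ref{bounds for KF} is the right tool for bounding the indices, but the ``uniform collapse of moduli'' step you flag is not a technicality---it is the whole content of the conjecture beyond $n=3$, and nothing in the paper (or in your outline) provides a mechanism for it. Appealing to Ishikawa's discreteness upstairs is circular unless you can independently verify that every Legendre-stable open Whitney umbrella in the relevant range arises as the Nash lift of a \emph{proper} corank~$1$ frontal, which is itself part of what must be shown.
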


The algebra $\tilde f^*\ms{O}_{2n+1}/f^*\mf{m}_{n+1}$ was introduced by Ishikawa in \cite{Ishikawa05} in order to give a characterisation of Legendrian stability

\section{Acknowledgements}
We would like to thank M. E. Hernandes for his helpful contributions to \S \ref{classification}.

\end{document}